\documentclass[reqno,11pt]{amsart}
\oddsidemargin9mm
\evensidemargin9mm 
\textwidth15.0cm  
\parskip3pt  
\usepackage{tkz-euclide}
\usepackage{hyperref}
\usepackage{amsmath} 
\usepackage{amsfonts}
\usepackage{amssymb} 
\usepackage{mathrsfs} 
\usepackage{graphicx}  
\usepackage{bm}
\usepackage{tikz} 
\usepackage{centernot}
\usetikzlibrary{svg.path} 
\thispagestyle{empty}

\usepackage{longtable}
\usepackage{makecell}

\usepackage{geometry}
 \geometry{
 left=30mm,
 right=27mm,
 top=27mm,
 }

\usetikzlibrary{calc} 
\usepackage{xcolor}  
\usetikzlibrary{patterns}
\usetikzlibrary{arrows.meta} 
\usepackage{amsthm} 
\usepackage{float}
\usepackage{enumitem}
\setenumerate{ ref={\normalfont(\roman*)},label={\normalfont(\roman*)},itemsep=3pt}
\usepackage[english]{babel}
\usepackage[font=footnotesize, labelfont=bf]{ caption}
\usepackage{ esint }
\usepackage{stackengine,scalerel,graphicx}
\stackMath

\definecolor{trueblue}{rgb}{0.0, 0.45, 0.81}
\definecolor{truegreen}{rgb}{0.13, 0.55, 0.13}

\newcommand{\e}{\varepsilon}

\theoremstyle{plain}
\begingroup
\newtheorem{theorem}{Theorem}[section]
\newtheorem{lemma}[theorem]{Lemma}

\newtheorem{proposition}[theorem]{Proposition}
\newtheorem{corollary}[theorem]{Corollary}
\newtheorem*{ltheorem}{Liouville's Theorem}
\endgroup

\theoremstyle{definition}
\begingroup

\newtheorem{remark}[theorem]{Remark}
\endgroup

\newcommand{\B}{\mb B^n}
\renewcommand{\tilde}{\widetilde}

\renewcommand{\t}{\nabla_T}
\renewcommand{\d}{ \mathrm{d}}

\DeclareMathOperator{\dv}{div}

\numberwithin{equation}{section}
\newcommand{\N}{\mathbb{N}}
\newcommand{\Z}{\mathbb{Z}}

\newcommand{\E}{\mathcal{E}}
\newcommand{\R}{\mathbb{R}}
\newcommand{\C}{\mathbb{C}}

\newcommand{\EEE}{\color{black}}

\renewcommand{\S}{{\mathbb{S}^{n-1}}}

\renewcommand{\L}{\mathcal{L}}
\renewcommand{\H}{\mathcal{H}}

\newcommand{\dy}{\,\mathrm{d}y}


\usepackage[normalem]{ulem}


\def \mb{\mathbb}

\def \n{\nabla}
\def \p{\partial}
\def \d{\,\mathrm{d}}
\def \mc{\mathcal}

\def \haus{\mathcal H^{n-1}}
\def \Mob{\textup{M\"ob}}
\def \mob{\mathfrak{mob}}

\def \SO{\textup{SO}}
\def \tp{\textup}
\def \wstar{\overset{\ast}{\rightharpoonup}}

\DeclareMathOperator{\cof}{cof}
\DeclareMathOperator{\ddiv}{div}
\DeclareMathOperator{\id}{id}


\begin{document}
	
\title[Optimal   quantitative   stability of the M\"obius group]
{Optimal   quantitative   stability 
\\
of  the M\"obius group of the sphere 
 in all dimensions 
}
	
\author[Andr\'e Guerra]{Andr\'e Guerra} 
\address[Andr\'e Guerra]{Institute for Theoretical Studies, ETH Zürich, CLV, Clausiusstrasse 47, 8006 Zürich, Switzerland}
\email{andre.guerra@eth-its.ethz.ch}

\author[Xavier Lamy]{Xavier Lamy} 
\address[{Xavier Lamy}]{Institut de Mathématiques de Toulouse, Université Paul Sabatier, 118, route de Narbonne, F-31062 Toulouse Cedex 8, France}
\email{Xavier.Lamy@math.univ-toulouse.fr}	
	
\author{Konstantinos Zemas}
\address[Konstantinos Zemas]{Institute for Applied Mathematics, University of Bonn\\
Endenicher Allee 60, 53115 Bonn, Germany}
\email{zemas@iam.uni-bonn.de}
	


\begin{abstract}
In any dimension $n\geq 3$,
we prove an optimal stability estimate for the M\"obius group among maps $u\colon \mathbb S^{n-1} \to \mathbb R^n$,
of the form
\begin{align*}
\inf_{\lambda>0,\phi\in \textup{M\"ob}(\mathbb S^{n-1})}
 \int_{\mathbb S^{n-1}}\left|\frac 1\lambda \nabla_{ T} u -\nabla_{ T}\phi\right|^{n-1}
 \! \d\mathcal H^{n-1}
\lesssim  
\mathcal E_{n-1}(u)\,.
\end{align*}
Here, $\mathcal E_{n-1}(u)$ is a conformally invariant deficit which measures 
 simultaneously lack of conformality and the deviation of $u(\mathbb S^{n-1})$ from being a round sphere in an isoperimetric sense.
This entails in particular
 the following qualitative statement: sequences with vanishing deficit, once appropriately normalized by the action of the M\"obius group, are compact. Both the qualitative and the quantitative results are new for all dimensions $n\geq 4$.
\end{abstract} 
	
\maketitle
\thispagestyle{empty}

	
\section{Introduction}\label{sec:intro}

In this work we investigate  stability properties
 of the conformal group of the sphere. To be precise, 
a map $u\in W^{1,n-1}(\S;\R^n)$ is said to be \textit{weakly conformal} if and only if it satisfies  
\begin{equation}
\label{eq:weakconf}
\left(\n_T u\right)^t\n_T u = \frac{|\n_T u|^2}{n-1} I_x
\end{equation}
for $\mc H^{n-1}$-a.e.\ $x\in \S$. Here and in the rest of the paper we take $n\geq 3$, $\nabla_Tu
$ denotes the tangential gradient of $u$ and $I_x\colon T_x\S\to T_x\S$ is the identity map, see Sections~\ref{Notation} and   \ref{sec:prelims} for further details and notation.
The prototypical solution of \eqref{eq:weakconf} is a \textit{M\"obius transformation} of $\S$, \textit{i.e.},  an $\S$-valued map obtained as the composition of rotations and spherical inversions. The class of such maps forms a finite dimensional Lie group, which we denote by $\Mob(\S)$.

A classical theorem in conformal geometry,  which is usually credited to Liouville, classifies completely the solutions to the nonlinear system \eqref{eq:weakconf} in the case of $\S$-valued maps:

\begin{ltheorem}\label{liouville_thm}
Let $u\in W^{1,n-1}(\mb S^{n-1};\mb S^{n-1})$ be a solution to  \eqref{eq:weakconf}. 
\begin{enumerate}
\item If $n=3$, then $u$ is holomorphic,  after identifying 
 $\mathbb S^2$ with the Riemann sphere 
 $\widehat\C$.  In particular, if $|\deg u|=1$, then $u\in \Mob(\mb S^{2})$.
\item\label{it:n>3} If $n\geq 4$, then $u\in \Mob(\mb S^{n-1})$.
\end{enumerate}
\end{ltheorem}

Liouville's Theorem makes manifest that system \eqref{eq:weakconf} behaves very differently depending on the dimension: for maps $u\colon \mb S^2 \to \mb S^2$, it reduces to the Cauchy--Riemann equations, which form a \textit{linear determined system}, while in higher dimensions it is genuinely \textit{nonlinear} and \textit{overdetermined}. In fact, the rigidity in \ref{it:n>3} above holds even on subdomains of $\S$,  cf.\ \cite[Chapter 5]{Iwaniec2001}. We will discuss this difference in behavior  in more detail below.

The constraint on the image of the maps in Liouville's Theorem is crucial, as there is an enormous family of solutions $u\colon \S \to \R^n$ to \eqref{eq:weakconf}: for instance, the Uniformization Theorem provides us with many examples of conformal maps from $\mathbb{S}^2$ to $\R^3$ which are not rescaled and translated copies of M\"obius transformations. Moreover, according to the Nash--Kuiper theorem  \cite{Nash1954,Kuiper1955} and its refinements \cite{Borisov2004,Conti2012,DeLellis2018,Cao2022}, there is an abundance of $C^{1,\alpha}$-isometric embeddings of $\S$, for $\alpha>0$ small enough:  in particular, there are many wild Sobolev solutions to \eqref{eq:weakconf}.  
 
 \subsection{Main results}
In this paper we address the  question of  stability of the M\"obius group among maps  $u\colon \S \to \R^n$: if $u$ is \textit{nearly conformal} and its image $u(\S)$ is \textit{nearly spherical}, is $u$ close to a M\"obius transformation in a strong sense? While the isoperimetric inequality and its stability properties \cite{Fusco2008,Figalli2010} give a natural characterization of nearly spherical sets, a fundamental aspect of this problem  also lies in deciding how to measure the deviation from conformality, and  in this regard  we will pursue the strategy of \cite{zemas2022rigidity}.

For a map $u\in W^{1,n-1}(\S;\R^n)$, we consider the \textit{conformal $(n-1)$-Dirichlet energy}
\begin{equation}\label{eq:Dirichlet}
\mc D_{n-1}(u)
:=\fint_{\mathbb S^{n-1}}\bigg(\frac{|\nabla_T u|^2}{n-1}\bigg)^{\frac{n-1}{2}}\d \haus\,,
\end{equation}
and the \textit{extrinsic volume functional}
\begin{equation}\label{eq:vol}
\mc V_n(u)
:=\fint_{\mathbb S^{n-1}}\big\langle u ,\bigwedge_{i=1}^{n-1}\partial_{\tau_i} u\big\rangle \d \haus\,,
\end{equation}
studied in detail for $n=3$ by Wente \cite{Wente1969}.
 In \eqref{eq:vol},  we have identified an $(n-1)$-vector in $\R^n$ with its Hodge dual. 
It is well-known but not obvious at first sight 
that $\mathcal V_n(u)$ 
is well-defined for $u\in W^{1,n-1}(\mathbb S^{n-1};\R^n)$, and we refer the reader to Section~\ref{sec:degree} for further details.

Following \cite{zemas2022rigidity} (see the discussion after (1.9) therein),  we consider a \textit{combined conformal-isoperimetric deficit}, via
\begin{equation}\label{eq:deficit}
\mc E_{n-1}(u) := \begin{cases}\frac{\mc [D_{n-1}(u)]^{\frac{n}{n-1}}}{|\mc V_n(u)|} -1 & \text{ if } \mc V_n(u)\neq 0\,,\\[3pt]
+\infty& \text{ if } \mc V_n(u)= 0\,. 
\end{cases}
\end{equation}
We note that this deficit is invariant under translations, rotations and dilations of the maps, as well as precompositions with M\"obius transformations of $\S$, and its choice is precisely justified by the following:

\begin{proposition}[Conformal isoperimetric inequality]\label{prop:isopintro}
For all $u\in W^{1,n-1}(\mb S^{n-1};\R^n)$ we have 
$$\mc E_{n-1}(u)\geq 0\,,$$ with equality if and only if $(u-y_0)/|\mc V_n(u)|^{1/n}\in \Mob(\mb S^{n-1})$ for some $y_0\in \R^n$.
\end{proposition}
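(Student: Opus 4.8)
The plan is to reduce the proposition to the single sharp inequality
\[
|\mc V_n(u)|\ \le\ \big[\mc D_{n-1}(u)\big]^{\frac{n}{n-1}},\qquad u\in W^{1,n-1}(\S;\R^n),
\]
which is exactly the statement $\mc E_{n-1}(u)\ge0$, and then to analyse its equality case. This inequality decomposes into two facts, each of which saturates precisely in a situation relevant to the statement: a pointwise algebraic bound that is an equality exactly on weakly conformal maps, and an isoperimetric inequality for the image of $u$ that is an equality exactly when that image is a round sphere covered once. The constants must be tracked so that $u=\id_\S$ --- and hence, by invariance, every element of $\Mob(\S)$ --- gives equality throughout; this also pins the optimal constant in the displayed inequality to $1$, which is what makes \eqref{eq:deficit} the natural deficit.

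For the pointwise bound: at $\haus$-a.e.\ $x\in\S$ the differential $\nabla_T u(x)\colon T_x\S\to\R^n$ has $n-1$ singular values $\sigma_1,\dots,\sigma_{n-1}\ge0$, with $\big|\bigwedge_{i=1}^{n-1}\partial_{\tau_i}u\big|=\sigma_1\cdots\sigma_{n-1}=:J_{n-1}u$ the tangential Jacobian and $|\nabla_Tu|^2=\sigma_1^2+\dots+\sigma_{n-1}^2$. The arithmetic--geometric mean inequality gives $J_{n-1}u\le\big(|\nabla_Tu|^2/(n-1)\big)^{(n-1)/2}$, with equality at $x$ if and only if all $\sigma_i$ coincide, i.e.\ if and only if \eqref{eq:weakconf} holds at $x$; integrating, $\fint_\S J_{n-1}u\,\d\haus\le\mc D_{n-1}(u)$.

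For the isoperimetric step I would use the integer-valued, compactly supported $BV$ degree function $N_u$ of $u$ built in Section~\ref{sec:degree}, for which $\mc V_n(u)=\tfrac n{|\S|}\int_{\R^n}N_u\,\d x$, together with $|\D N_u|(\R^n)\le\int_\S J_{n-1}u\,\d\haus$ --- the latter being an equality if and only if $u$ introduces no cancellation in the push-forward $u_\#[\S]$. Writing $N_u=N_u^+-N_u^-$ and applying the coarea formula to these nonnegative $\Z$-valued functions, the super-level sets $E_k^\pm:=\{\pm N_u\ge k\}$, $k\ge1$, have finite perimeter with $\sum_{k,\pm}\mathrm{Per}(E_k^\pm)=|\D N_u|(\R^n)$ and $\sum_{k,\pm}|E_k^\pm|=\int_{\R^n}|N_u|\,\d x$. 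The sharp Euclidean isoperimetric inequality $|E|\le c_n\,\mathrm{Per}(E)^{n/(n-1)}$, with $c_n=\omega_n(n\omega_n)^{-n/(n-1)}$ and $\omega_n:=|\B|$, applied to each $E_k^\pm$, together with the superadditivity bound $\sum a_k^{n/(n-1)}\le\big(\sum a_k\big)^{n/(n-1)}$ valid for $a_k\ge0$, then yields $\int_{\R^n}|N_u|\,\d x\le c_n\,|\D N_u|(\R^n)^{n/(n-1)}$. Chaining the estimates and using $|\S|=n\omega_n$, all constants collapse to $1$ and one obtains $|\mc V_n(u)|\le\mc D_{n-1}(u)^{n/(n-1)}$.

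For the equality case --- the heart of the matter --- suppose $\mc E_{n-1}(u)=0$, so that every inequality above is an equality: $u$ is weakly conformal $\haus$-a.e.; $N_u$ has constant sign, say $N_u\ge0$; equality in $\sum a_k^{n/(n-1)}\le(\sum a_k)^{n/(n-1)}$ leaves a single nonempty super-level set, so $N_u\in\{0,1\}$; and equality in the isoperimetric inequality forces $\{N_u=1\}=B_\rho(y_0)$ for some ball. Hence $u$ covers $\partial B_\rho(y_0)$ with multiplicity one, so --- after rescaling --- $\psi:=(u-y_0)/\rho$ is a weakly conformal self-map of $\S$ of degree $\pm1$, and Liouville's Theorem (case \ref{it:n>3} when $n\ge4$, case (i) when $n=3$, where exactly the condition $|\deg\psi|=1$ is at hand) gives $\psi\in\Mob(\S)$. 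Since $\mc V_n$ is $n$-homogeneous and translation invariant and $\mc V_n(\psi)=\deg\psi=\pm1$ for $\psi\in\Mob(\S)$, this yields $\rho^n=|\mc V_n(u)|$, i.e.\ $(u-y_0)/|\mc V_n(u)|^{1/n}=\psi\in\Mob(\S)$. Conversely, as $\mc E_{n-1}$ is invariant under $v\mapsto\lambda v+c$ and vanishes on $\Mob(\S)$ (there $\mc D_{n-1}=1$ and $|\mc V_n|=1$), this condition forces $\mc E_{n-1}(u)=0$. The main obstacle is precisely this last part: upgrading the measure-theoretic conclusion ``$N_u=\mathbf 1_{B_\rho(y_0)}$ and $\nabla_T u$ conformal a.e.'' to a genuine $\S$-valued weakly conformal map of degree $\pm1$ to which Liouville's Theorem applies --- in particular handling $u$ on $\{\nabla_T u=0\}$, where the critical exponent of $W^{1,n-1}$ provides no continuity for free --- and invoking the rigidity of the Euclidean isoperimetric inequality.
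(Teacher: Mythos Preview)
Your argument for the inequality $|\mc V_n(u)|\le \mc D_{n-1}(u)^{n/(n-1)}$ is essentially the paper's: the pointwise Hadamard/AM--GM bound plus a BV isoperimetric estimate on the integer-valued degree function. Whether one packages the latter as coarea on super-level sets plus superadditivity, or as the sharp Sobolev inequality in $BV$ (which is what the paper quotes), makes no real difference.

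The gap you flag at the end is genuine, and your proposal does not close it. From $N_u=\mathbf 1_{B_\rho(y_0)}$ and $\haus$-a.e.\ weak conformality alone you cannot conclude that $u$ takes values in $\partial B_\rho(y_0)$: a priori $u$ is only $W^{1,n-1}$, the critical exponent gives no continuity, and there is no reason the image should be contained in the sphere on the set $\{\nabla_T u=0\}$. The paper's resolution is to invoke \emph{regularity}: a map with $\mc E_{n-1}(u)=0$ is in particular a critical point, hence a $W^{1,n-1}$ solution of the $H$-system $\Delta_{n-1}u+H_u\,J(u)=0$, and by the Mou--Yang result (Theorem~\ref{thm:H-system}) every such solution is $C^{1,\alpha}$.

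Once $u$ is continuous, one also needs the extra information --- which comes from analysing equality in the duality estimate $|D N_u|(\R^n)\le\int_\S|J(u)|$, not merely from equality in the isoperimetric step --- that $u(x)\in\partial B_\rho(y_0)$ for $\haus$-a.e.\ $x$ with $J(u)\neq 0$. Combined with equality in Hadamard (so $\nabla_T u=0$ wherever $J(u)=0$), this makes $u$ locally constant on the open set $\Omega:=\{u\notin\partial B_\rho(y_0)\}$; continuity and $u(\partial\Omega)\subset\partial B_\rho(y_0)$ then force $\Omega=\emptyset$, and Liouville applies. Your sketch jumps over both the regularity input and this pointwise image constraint.
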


Our first theorem is a novel compactness result for sequences of maps with vanishing deficit, up to normalization and \textit{extraction of a single bubble,} \textit{i.e.}, up to precomposition with M\"obius transformations:

\begin{theorem}[Compactness]\label{thm:compactintro}
Let $(u_j)_{j\in \N}\subset W^{1,n-1}(\mb S^{n-1};\R^n)$ be a sequence such that 
\begin{equation}\label{eq:minim_seq}
\lim_{j\to \infty}\mc E_{n-1}(u_j)
 =
  0\,.
\end{equation}
Up to a non-relabeled subsequence, there exist $(\phi_j)_{j\in \N} \subset \Mob(\mb S^{n-1})$ and  $O\in \textup O(n)$  such that 
\begin{equation}\label{eq:compactness}
\frac{u_j  \circ \phi_j-\fint_{\mb S^{n-1}} u_j \circ \phi_j }{ |\mc V_n(u_j)|^{1/n}} \rightarrow O \id_{\S} \ \text{ strongly in } W^{1,n-1}(\mb S^{n-1};\R^n)\,.
\end{equation}
\end{theorem}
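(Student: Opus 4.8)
The plan is to regard $(u_j)$ as a minimizing sequence for the scale-invariant functional $\mc E_{n-1}$, to use the non-compact symmetry group of the problem to normalize away its degeneracies, and then to rule out concentration of energy (``bubbling'') by playing off the conformal isoperimetric inequality of Proposition~\ref{prop:isopintro} against the strict concavity of $t\mapsto t^{(n-1)/n}$. For $j$ large $u_j$ is non-constant (otherwise $\mc E_{n-1}(u_j)=+\infty$), so its conformal energy density $|\nabla_T u_j|^{n-1}$ induces, after normalization, an absolutely continuous --- in particular non-atomic --- probability measure on $\S$. By a conformal barycenter argument --- a Brouwer-degree argument on the ball $\mb B^n\cong\Mob(\S)/\textup{O}(n)$, in the spirit of Douady--Earle --- there is $\phi_j\in\Mob(\S)$, unique up to post-composition with a rotation, such that the conformal energy density of $u_j\circ\phi_j$, which by conformal invariance of $\mc D_{n-1}$ is the $\phi_j^{-1}$-pushforward of that of $u_j$, has barycenter at the origin. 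Using the invariance of $\mc E_{n-1}$ under pre-composition with $\Mob(\S)$, translations and dilations, together with the scaling of $\mc V_n$, we may replace $u_j$ by
\[
\tilde w_j:=\frac{u_j\circ\phi_j-\fint_{\S}u_j\circ\phi_j}{|\mc V_n(u_j)|^{1/n}},
\]
so that $\mc E_{n-1}(\tilde w_j)\to0$, $\fint_\S\tilde w_j=0$, $|\mc V_n(\tilde w_j)|=1$, hence $\mc D_{n-1}(\tilde w_j)\to1$, and the normalized conformal energy measure $\tilde\mu_j$ of $\tilde w_j$ has vanishing barycenter.

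Being bounded in $W^{1,n-1}(\S;\R^n)$, along a subsequence $\tilde w_j\weak v$ weakly, $\tilde\mu_j\wstar\mu_\infty$ weakly-$*$ (a probability measure), and $|\nabla_T\tilde w_j|^{n-1}\d\haus\wstar|\nabla_T v|^{n-1}\d\haus+\Sigma$ for some $\Sigma\ge0$ carried by an at most countable set $\{x_k\}$, by critical Sobolev concentration-compactness. Extracting the conformal bubble at each $x_k$ --- conformally rescaling $\tilde w_j$ around $x_k$ to produce finite-energy profiles $b_k\colon\S\to\R^n$ (after stereographic projection and removability of point singularities in $W^{1,n-1}$) --- and using the additivity of the volume functional $\mc V_n$ together with the absence of neck contributions for this conformally invariant minimization, both of which rest on the analysis of $\mc V_n$ in Section~\ref{sec:degree}, one obtains, writing $s_k>0$ for the normalized energy of $b_k$ and $\sigma:=\sum_k s_k\in[0,1]$, that $\mc D_{n-1}(v)=1-\sigma$ and $\lim_j\mc V_n(\tilde w_j)=\mc V_n(v)+\sum_k\mc V_n(b_k)$. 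Proposition~\ref{prop:isopintro}, applied on $\S$ and on each bubble, gives $[\mc D_{n-1}(v)]^{n/(n-1)}\ge|\mc V_n(v)|$ and $s_k^{n/(n-1)}\ge|\mc V_n(b_k)|$; combining these with the triangle inequality and the superadditivity of $t\mapsto t^{n/(n-1)}$,
\[
1=\lim_j|\mc V_n(\tilde w_j)|\le|\mc V_n(v)|+\sum_k|\mc V_n(b_k)|\le(1-\sigma)^{n/(n-1)}+\sigma^{n/(n-1)}\le1,
\]
so every inequality is an equality; in particular $(1-\sigma)^{n/(n-1)}+\sigma^{n/(n-1)}=1$ forces $\sigma\in\{0,1\}$, and strict superadditivity forces at most one bubble.

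If $\sigma=1$, then $v$ is constant and a single bubble carries the entire defect, so $\tilde\mu_j\wstar\delta_{x_0}$ and $0=\int_\S x\d\tilde\mu_j\to x_0\in\S$, which is impossible. Hence $\sigma=0$: there is no concentration, $\Sigma=0$, $\mc D_{n-1}(\tilde w_j)\to\mc D_{n-1}(v)=1$, and since $L^{n-1}$ is uniformly convex (as $n\ge3$) the Radon--Riesz property upgrades $\tilde w_j\weak v$ to strong convergence in $W^{1,n-1}(\S;\R^n)$. Consequently $\mc V_n$ is continuous along $(\tilde w_j)$ (Section~\ref{sec:degree}), so $|\mc V_n(v)|=1=[\mc D_{n-1}(v)]^{n/(n-1)}$, i.e.\ $\mc E_{n-1}(v)=0$, and Proposition~\ref{prop:isopintro} gives $v=y_0+\phi$ for some $\phi\in\Mob(\S)$ and $y_0\in\R^n$, with $y_0=-\fint_\S\phi$ because $\fint_\S v=0$. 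Finally, the conformal energy measure of $v$ equals $\mu_\infty$ and hence has vanishing barycenter; since $\nabla_T v=\nabla_T\phi$, so does the conformal energy density of $\phi$, and a direct computation --- the conformal density of a M\"obius transformation which is not a rotation is a non-constant, axially symmetric positive function with nonzero first moment --- forces $\phi\in\textup{O}(n)$, say $\phi=O\,\id_\S$. Then $y_0=0$ and $v=O\,\id_\S$, which is exactly \eqref{eq:compactness}.

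The crux is the profile decomposition in the previous step: carrying it out in the critical space $W^{1,n-1}$ with \emph{exact} conservation of the volume functional $\mc V_n$ (no loss along necks), which is what the tight balance against the strictly concave exponent requires. This hinges on the fine properties of $\mc V_n$ --- its interpretation as a degree / enclosed signed volume and its behaviour under weak convergence and conformal rescaling --- developed in Section~\ref{sec:degree}; the case $n=3$ is the prototype studied by Wente, while for $n\ge4$ one additionally invokes the rigidity in part~\ref{it:n>3} of Liouville's Theorem to constrain the possible bubbles. By comparison, the renormalization and the final moment computation are routine.
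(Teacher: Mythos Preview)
Your overall strategy --- normalize via the M\"obius group, exploit the strict concavity of $t\mapsto t^{(n-1)/n}$ against the conformal isoperimetric inequality, and conclude by Radon--Riesz --- is the right high-level picture, and your conformal-barycenter normalization is a pleasant alternative to the paper's covering argument. But the heart of your proof, the ``profile decomposition'' paragraph, is a genuine gap for $n\geq 4$, and in fact one the paper explicitly flags as open.

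Concretely, you assert that the defect measure $\Sigma$ is carried by an at most countable set of points ``by critical Sobolev concentration--compactness'', and that at each such point one can extract a bubble $b_k$ with \emph{exact additivity} of both $\mc D_{n-1}$ and $\mc V_n$ and no neck loss. Neither claim is available here. Lions' critical concentration--compactness gives atomic defect only when a critical Sobolev embedding is at play; for the bare $L^{n-1}$-mass of $|\nabla_T u_j|$ there is no such mechanism, and indeed the paper states (Section~1.2.2) that even finiteness of the concentration set for Palais--Smale sequences of the $(n-1)$-Laplacian $H$-system is not known. Extracting profiles would further require an analogue of the Brezis--Coron/Struwe bubbling analysis, which rests on the Wente-type implication $\Delta_{n-1}u\in\mathscr H^1\Rightarrow u\in C^0$; this fails for $n\geq 4$ (see \eqref{eq:nowente}). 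Finally, even if profiles existed, they would be finite-energy solutions of the $H$-system \eqref{eq:H}, and for $n\geq 4$ such solutions are not known to be weakly conformal, so invoking Liouville's theorem to ``constrain the possible bubbles'' is not legitimate. Your closing paragraph acknowledges this is the crux and defers to Section~\ref{sec:degree}, but that section only establishes well-definedness and weak-continuity properties of $\mc V_n$; it contains no profile decomposition.

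The paper avoids all of this. Instead of decomposing into individual bubbles, it argues with only a \emph{two-piece} splitting. Via Ekeland's principle it replaces $(u_j)$ by a Palais--Smale sequence, and Proposition~\ref{suboptimal compactness} (a subcritical compactness for the perturbed $(n-1)$-Laplace system) yields $\nabla_T v_j\to\nabla_T u$ pointwise a.e.; the Brezis--Lieb lemma then gives $\mc D_{n-1}(v_j)=\mc D_{n-1}(u)+\mc D_{n-1}(v_j-u)+o(1)$, while the multilinear structure of $\mc V_n$ gives the analogous volume splitting. The concavity argument you wrote down then applies verbatim with ``body'' and ``remainder'' in place of ``body'' and ``bubbles'', forcing either $\mc V_n(u)=0$ or the remainder to vanish (Lemma~\ref{weak_to_strong}). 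To rule out a constant limit, the paper does not rely on a barycenter/single-atom contradiction (which would again need atomicity of the defect); rather, the $\phi_j$ are chosen so that a fixed fraction of energy lies in each hemisphere (property~(iii) in \eqref{eq:prop_of_w_j}), and a slicing/extension argument along a good $(n-2)$-sphere reruns the concavity dichotomy to produce a contradiction in both branches. This is the substitute for the bubbling machinery you invoke, and it is what makes the argument go through for $n\geq 4$.
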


We note that if $u$ is $\S$-valued, then the conclusion of Theorem \ref{thm:compactintro} is a simple consequence of  a topological argument that allows us to pick $(\phi_j)_{j\in \N} \subset \Mob(\mb S^{n-1})$ so that $\fint_{\mb S^{n-1}} u_j \circ \phi_j=0$. This normalization, combined with the sharp Poincar\'e inequality on ${\mb S^{n-1}}$ and  the convexity of $\mc D_{n-1}$, easily yields the conclusion, see \cite[Lemma A.3]{zemas2022rigidity} and \cite[Lemma 2.4]{GuerraLamyZemas2023}. The main difficulty in proving Theorem \ref{thm:compactintro} thus resides in 
the lack of constraint on the image of $u$.

For $n=3$, Theorem \ref{thm:compactintro} was already known (cf.\ \cite[Lemma 2.1]{Caldiroli2006}), and it can also be retrieved as  a consequence of the seminal bubbling analysis of Brezis--Coron \cite[Theorem 0.3]{Brezis1985} and Struwe \cite[Proposition 3.7]{Struwe1985} for the $H$-system. Their tools, which we will explain in more detail below, do not apply in higher dimensions, where a new strategy is needed.

Having a \textit{qualitative} compactness result available,  one can hope for a \textit{quantitative} result. This is precisely the content of our main theorem.

\begin{theorem}[Optimal quantitative stability]\label{thm:qttiveintro}
There exists a  dimensional  constant $C_n>0$ such that, for all maps $u\in W^{1,n-1}(\mb S^{n-1};\R^n)$, we have
\begin{equation}\label{eq:qttive}
\inf_{\phi \in \Mob(\mb S^{n-1})} \fint_{\mb S^{n-1}} \bigg| \frac{\n_T u}{ |\mc V_n(u)|^{1/n}} - \n_T \phi\bigg|^{n-1}\, \mathrm{d}\haus \leq C_n \mc E_{n-1}(u)\,.
\end{equation}
\end{theorem}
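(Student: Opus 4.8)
The plan is to deduce the quantitative estimate \eqref{eq:qttive} from the qualitative compactness result (Theorem~\ref{thm:compactintro}) by a contradiction–compactness argument combined with a linearization around the manifold $\Mob(\S)$. Suppose \eqref{eq:qttive} fails; then there is a sequence $(u_j)$ with $\mathcal E_{n-1}(u_j)\to 0$ but for which the left-hand side of \eqref{eq:qttive}, call it $\delta_j$, satisfies $\delta_j/\mathcal E_{n-1}(u_j)\to\infty$. After applying the normalizations of Theorem~\ref{thm:compactintro} (which are compatible with the deficit and with the left-hand side of \eqref{eq:qttive}, both being invariant under precomposition with M\"obius maps and under the affine renormalization), we may assume $v_j:=(u_j\circ\phi_j-\fint u_j\circ\phi_j)/|\mathcal V_n(u_j)|^{1/n}\to O\,\id_\S$ strongly in $W^{1,n-1}$. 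The idea is then to write $v_j = O\,\id_\S + t_j w_j$, where $t_j\to 0$ measures the size of the perturbation in the relevant norm and $w_j$ is normalized; one shows that the deficit controls $t_j^2$ from below (coercivity of the second variation transverse to the M\"obius manifold) while the distance to $\Mob(\S)$ is controlled by $t_j^2$ from above, giving $\delta_j\lesssim\mathcal E_{n-1}(u_j)$ and the desired contradiction.

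The first substantive step is a \emph{Taylor expansion of the deficit} around $\Mob(\S)$. Using that $\mathcal D_{n-1}$ and $\mathcal V_n$ are each smooth in suitable senses, and that Proposition~\ref{prop:isopintro} identifies $\Mob(\S)$ (up to the affine renormalization) with the zero set of $\mathcal E_{n-1}$, one expands: for $v = O\,\id_\S + tw$ with $w$ orthogonal to the tangent space $T_{O\,\id_\S}\Mob(\S)$ in an appropriate inner product, $\mathcal E_{n-1}(v) = t^2 Q(w) + o(t^2)$, where $Q$ is a quadratic form. The crucial analytic input is the \emph{strict positivity of $Q$ on the orthogonal complement of the tangent space to $\Mob(\S)$}: this is where the rigidity encoded in Liouville's Theorem and the sharp constant in Proposition~\ref{prop:isopintro} enter, and it should reduce, after diagonalizing, to a sharp Poincar\'e-type / Hardy-type inequality on $\S$ (for $n=3$ this is the classical spectral gap behind the Wente functional and the Brezis--Coron analysis; for $n\geq4$ one needs the overdetermined structure). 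Since $\mathcal E_{n-1}$ involves $|\mathcal V_n|^{-1}$, one must check $\mathcal V_n(v_j)\neq0$ along the sequence, which follows from $v_j\to O\,\id_\S$ and continuity of $\mathcal V_n$.

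The second step handles the \emph{nonlinear and $p$-growth features}. Because the exponent $n-1$ is generally not $2$, the functional $\mathcal D_{n-1}$ is only $C^{1,\alpha}$, not $C^2$, and the naive Taylor expansion must be justified carefully: one works with the integrand $F(\xi)=(|\xi|^2/(n-1))^{(n-1)/2}$, expands it to second order around the conformal matrices $\n_T(O\,\id_\S)$ (where $F$ is smooth since those matrices are nonzero), and controls the remainder by the strong $W^{1,n-1}$ convergence together with a uniform $W^{1,n-1}$ bound, using equi-integrability. A parallel expansion is needed for $\mathcal V_n$, which is multilinear in $\n_T u$ and hence genuinely smooth; the null-Lagrangian / degree structure discussed in Section~\ref{sec:degree} gives the useful identity that its first variation at $O\,\id_\S$ pairs against the tangent directions correctly, so that the linear terms in the expansion of $\mathcal E_{n-1}$ vanish exactly on $T_{O\,\id_\S}\Mob(\S)$. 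Finally one relates the left-hand side of \eqref{eq:qttive} to $t_j^2$: writing the infimum over $\phi$ and choosing $\phi_j$ to be the nearest M\"obius map, the distance is comparable to $\|v_j - O\,\id_\S\|$ in the linearized norm up to higher order, so $\delta_j\simeq t_j^2$, and combined with $\mathcal E_{n-1}(v_j)\gtrsim t_j^2$ we are done.

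\textbf{Main obstacle.} The heart of the matter, and the step I expect to be hardest, is establishing the \emph{coercivity of the second variation $Q$ transverse to $\Mob(\S)$}, uniformly enough to survive the limit. In the determined case $n=3$ this is the well-understood spectral gap for the linearized Wente/$H$-system (the kernel being exactly the M\"obius directions and the conformal Killing fields), but for $n\geq4$ the system \eqref{eq:weakconf} is overdetermined and no such classical spectral theory is available off the shelf; one must show that the only directions killing $Q$ are the M\"obius infinitesimal generators, which morally requires a \emph{linearized Liouville theorem}. Controlling the remainder terms uniformly — so that the $o(t^2)$ is genuinely negligible compared to the coercive $t^2 Q(w)$ — given only $W^{1,n-1}$ (not $W^{1,2}$ or $C^1$) convergence, is the second source of difficulty, and is precisely why the qualitative compactness of Theorem~\ref{thm:compactintro}, with its \emph{strong} convergence, is indispensable as an input rather than merely weak convergence.
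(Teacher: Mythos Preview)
Your reduction to the local problem via Theorem~\ref{thm:compactintro} and a contradiction argument is correct and matches the paper (Lemma~\ref{from_local_to_global}). However, the step you flag as the ``main obstacle'' --- coercivity of the second variation $Q_n$ transverse to $\mob(\S)$ --- is in fact already established in \cite[Theorem~1.5]{zemas2022rigidity} and is simply quoted here as Theorem~\ref{thm:linstab}. The genuine difficulty lies elsewhere, and your proposal has a real gap there.

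The problem is your claimed expansion $\mathcal E_{n-1}(\id_\S + tw) = t^2 Q(w) + o(t^2)$. For $n\geq 4$ the actual expansion is
\[
\mathcal E_{n-1}(\id_\S + w) = Q_n(w) + O\Big(\fint_\S |\nabla_T w|^3\Big),
\]
and the cubic remainder is \emph{not} dominated by the quadratic term in the regime you need. Coercivity of $Q_n$ only gives $Q_n(w)\gtrsim \|\nabla_T w\|_{L^2}^2$, and the cubic term can swamp this whenever $\nabla_T w$ concentrates: take $|\nabla_T w|\sim M$ on a set of measure $\sim M^{-2(n-1)}$ with $M$ large, so that $\|\nabla_T w\|_{L^{n-1}}\to 0$ but $\fint|\nabla_T w|^3\big/\fint|\nabla_T w|^2 \sim M\to\infty$. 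Strong $W^{1,n-1}$ convergence of $v_j\to\id_\S$ does not rule this out, so your appeal to equi-integrability does not close the gap. Relatedly, your claim ``$\delta_j\simeq t_j^2$'' is dimensionally inconsistent: the left-hand side of \eqref{eq:qttive} is an $(n{-}1)$-power, hence $\delta_j\sim\|\nabla_T w\|_{L^{n-1}}^{n-1}$, which the quadratic coercivity alone does not control. This is precisely why the naive Taylor expansion yields only the $W^{1,\infty}$-local result of \cite[Corollary~1.6]{zemas2022rigidity}, as the paper explains in Subsection~\ref{quantitative_description}.

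The paper's fix, adapted from Figalli--Zhang \cite{figalli2022sobolev}, is to replace the exact Taylor expansion by a pointwise \emph{lower} inequality for $|P_T+\nabla_T w|^{n-1}$ (Lemma~\ref{lem_Dirichlet_exp}) that produces a term $c(\kappa)\fint|\nabla_T w|^{n-1}$ directly, together with a modified, nonquadratic quantity $\widetilde Q_n(w)$ in place of $Q_n(w)$. One then runs a \emph{second} contradiction/compactness argument (Step~2 of the proof of Proposition~\ref{prop:local_nonlinear_stability}) to show that $\widetilde Q_n$ still dominates the residual lower-order terms; the linear coercivity \eqref{eq:lin_stab} enters only at the very end, applied to the weak $W^{1,2}$-limit of the renormalized sequence. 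This interpolation between the quadratic and the $(n{-}1)$-growth regimes is the missing idea in your proposal.
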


For $n=3$, Theorem \ref{thm:qttiveintro} was proved in \cite[Theorem 1.4]{zemas2022rigidity}. 
In fact, in the same work a $W^{1,\infty}$-local version of  the theorem  
was established also in dimensions $n\geq 4$ (cf.\ Corollary 1.6 therein). In particular, 
Theorem \ref{thm:qttiveintro} provides the optimal quantitative version of these results in any dimension, global in $W^{1,n-1}(\S;\R^n)$ and with an explicit dilation factor of $u$. 

For sphere-valued maps $u\in W^{1,n-1}(\mb S^{n-1};\mb S^{n-1})$ we have 
$\mc V_n(u) = \deg(u)$.  In this case, one can consider the \textit{purely  conformal deficit}
$$\delta_{n-1}(u) :=\mc D_{n-1}(u)-1.$$  For maps of degree $\pm 1$ and whenever  $0\leq \delta_{n-1}(u)\ll 1$, we have
\begin{equation*}
\label{eq:deficit_R_n_def_on_S}	
\mc E_{n-1}(u)=(1+\delta_{n-1}(u))^{\frac{n}{n-1}}-1\lesssim \delta_{n-1}(u)\,,
\end{equation*}
and so Theorem \ref{thm:qttiveintro} immediately yields the following:

\begin{corollary}[Stability for sphere-valued maps]
\label{cor:sphere-val}
There is a  dimensional  constant $C'_n>0$ such that, for all $u\in W^{1,n-1}(\S;\S)$ with $|\tp{deg}(u)|=1$,  we have
\begin{equation}
\label{eq:sphereval}
\inf_{\phi \in \Mob(\mb S^{n-1})} \fint_{\mb S^{n-1}} | \n_T u- \n_T \phi|^{n-1}\, \mathrm{d}\haus \leq C'_n \delta_{n-1}(u)\,.
\end{equation}
\end{corollary}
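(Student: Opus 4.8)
The plan is to deduce the corollary directly from Theorem \ref{thm:qttiveintro} by unwinding the definitions in the sphere-valued case. First I would record that for $u \in W^{1,n-1}(\S;\S)$ the extrinsic volume functional evaluates to the topological degree, $\mc V_n(u) = \deg(u)$; this identity is presumably established in Section~\ref{sec:degree} and I would simply invoke it. Consequently, for maps with $|\deg u| = 1$ we have $|\mc V_n(u)|^{1/n} = 1$, so the normalization factor in \eqref{eq:qttive} disappears and the left-hand side of \eqref{eq:qttive} becomes exactly the left-hand side of \eqref{eq:sphereval}.

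Next I would bound the deficit $\mc E_{n-1}(u)$ in terms of $\delta_{n-1}(u)$. Since $|\mc V_n(u)| = |\deg u| = 1$, the definition \eqref{eq:deficit} gives $\mc E_{n-1}(u) = [\mc D_{n-1}(u)]^{n/(n-1)} - 1 = (1 + \delta_{n-1}(u))^{n/(n-1)} - 1$. Here there is a small dichotomy: if $\delta_{n-1}(u) \geq 1$ (equivalently the deficit is bounded below by a dimensional constant), then the right-hand side of \eqref{eq:sphereval} dominates the left-hand side trivially, since the left-hand side is controlled by $\fint |\n_T u|^{n-1} + \fint|\n_T\phi|^{n-1}$ for a fixed reference $\phi = \id_\S$, and $\fint |\n_T u|^{n-1} \sim \mc D_{n-1}(u) = 1 + \delta_{n-1}(u) \leq 2\delta_{n-1}(u)$; one then absorbs everything into $C'_n$. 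If instead $0 \leq \delta_{n-1}(u) \leq 1$, the elementary inequality $(1+t)^{n/(n-1)} - 1 \leq \tfrac{n}{n-1} 2^{1/(n-1)} t$ for $t \in [0,1]$ (by convexity / the mean value theorem) yields $\mc E_{n-1}(u) \lesssim \delta_{n-1}(u)$ with a dimensional constant.

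Combining these two observations with Theorem \ref{thm:qttiveintro} gives
\begin{equation*}
\inf_{\phi \in \Mob(\S)} \fint_{\S} |\n_T u - \n_T \phi|^{n-1}\, \d\haus \leq C_n \mc E_{n-1}(u) \leq C_n \cdot C \, \delta_{n-1}(u) =: C'_n \, \delta_{n-1}(u),
\end{equation*}
which is \eqref{eq:sphereval}. There is no real obstacle here: the content of the corollary is entirely contained in Theorem \ref{thm:qttiveintro}, and the only thing to check carefully is the passage from $\mc E_{n-1}$ to $\delta_{n-1}$, for which the regime $\delta_{n-1}(u) \ll 1$ is the interesting one and the complementary regime is handled by a crude a priori bound. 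The one genuinely external input is the identity $\mc V_n(u) = \deg(u)$ for sphere-valued maps, which I would cite rather than reprove.
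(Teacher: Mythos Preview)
Your proposal is correct and follows essentially the same route as the paper: the paper simply observes (in the paragraph preceding the corollary) that $\mc V_n(u)=\deg(u)$ for sphere-valued maps, so that $|\mc V_n(u)|^{1/n}=1$ and $\mc E_{n-1}(u)=(1+\delta_{n-1}(u))^{n/(n-1)}-1\lesssim \delta_{n-1}(u)$ in the small-deficit regime, then invokes Theorem~\ref{thm:qttiveintro}. Your treatment of the complementary regime $\delta_{n-1}(u)\geq 1$ via a crude a priori bound is a reasonable addition that the paper leaves implicit.
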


Previously, we had proved Corollary~\ref{cor:sphere-val}  in \cite{GuerraLamyZemas2023}, although with a different proof. See also \cite{bernand2019quantitative,hirsch2022mobius2,topping2020,rupflin} for the case $n=3$.

\begin{remark}[Optimality]
As shown in \cite[Appendix A]{GuerraLamyZemas2023}, estimate \eqref{eq:sphereval} is optimal, in the sense that on its right-hand side the deficit $\delta_{n-1}(u)$ cannot be replaced with $\delta_{n-1}(u)^\beta$ for some $\beta>1.$  In particular, this shows that Theorem \ref{thm:qttiveintro} is also optimal, even if we restrict ourselves to $\S$-valued maps:  on the right-hand side of \eqref{eq:qttive} the deficit $\mc E_{n-1}(u)$ also cannot be raised to a higher power. Theorem \ref{thm:qttiveintro} is also optimal in terms of the distance used, in the sense that \eqref{eq:qttive} establishes stability for the gradients in the conformally-invariant norm: clearly one cannot hope for a better result without stronger a priori assumptions on the maps under consideration.
  \end{remark}

The problem of stability for Liouville's Theorem was first studied by Lavrentiev in  \cite{Lavrentiev1954}.  As mentioned above, a crucial feature of this problem resides in deciding how to measure  the deviation from conformality.  Lavrentiev considered maps $u\colon \Omega\to \Omega'$, where $\Omega, \Omega'\subset \S$ are proper domains, and he showed that if $u$ is $C^{1,\alpha}$ and has angle distortion uniformly close  to $1$, then $u$ is uniformly close to a M\"obius transformation.  There were many subsequent works in this line of investigation,  see for instance \cite{Friesecke2002, faraco2005geometric, bernand2019quantitative, hirsch2022mobius2, topping2020, rupflin}, the works  \cite{Reshetnyak1963,Reshetnyak1970} where a deficit somewhat reminiscent of $\mc E_{n-1}$ is considered,  and the monograph \cite{Reshetnyak1994}. Although these works consider both pointwise and integral measures of 
deviation from  conformality,  all of them study the stability of the M\"obius group among 
  maps for which the domain and the target have the same dimension. 
An important feature of this setting is that compactness properties of nearly conformal maps are a straightforward consequence of lower semicontinuity arguments; in other words, the analogue of Theorem \ref{thm:compactintro} in this setting is easily established. We note, however, that subtler issues arise if one works in Sobolev spaces with lower integrability \cite{Yan1996,Yan1998,Muller1999f}.

\subsection{Proof of Theorem \ref{thm:compactintro} and relation to \texorpdfstring{$H$}{H}-systems.}
 
Energies similar to $\mc E_{n-1}$  appear in the works of Heinz, Hildebrandt, and Wente,  among 
others, in relation to the Plateau problem for surfaces with \textit{prescribed mean curvature}, see \textit{e.g.}\ the survey \cite{Bethuel2002} and the references therein. This perspective is quite useful in our work, as we now explain. 
 
In the setting of Theorem \ref{thm:compactintro},  by using the invariances of the deficit and after identifying $ \overline{\R^{n-1}}:=\R^{n-1}\cup\{\infty\}$ with $\S$  via stereographic projection, it suffices to consider sequences $(u_j)_{ j\in \N }\subset W^{1,n-1}(\R^{n-1};\R^n)$ normalized so that  $\mc V_n(u_j)=1$. We are interested in sequences satisfying the minimizing property \eqref{eq:minim_seq}. By Ekeland's Variational Principle \cite{Ekeland1974},  we can move our attention to  such sequences  that further  satisfy a perturbed form of the Euler--Lagrange system for $\mc E_{n-1}$, which simply means that
\begin{equation}
\label{eq:approxHsystem}
\Delta_{n-1} u_j + H_j \p_{x_1} u_j \wedge \dots \wedge \p_{x_{n-1}} u_j = g_j\,, \qquad
\begin{cases} 
\sup_{j\in \N}\|\n u_j\|_{L^{n-1}(\R^{n-1})}
<+\infty\,,\\[5pt]
g_j \to 0 \text{  in } (W^{1,{n-1}})^*(\R^{n-1};\R^n)\,,
\end{cases}
\end{equation}
where $(H_j)_{ j\in \N }\subset \R_+$ is  an appropriate  bounded sequence and
$\Delta_p u :=\ddiv(|\n u|^{p-2} \n u)$ is the $p$-Laplace operator. Condition \eqref{eq:approxHsystem} asserts that $(u_j)_{j\in \N}$ is a \textit{Palais--Smale sequence} \cite[Section~II.2]{Struwe2008}. Up to a subsequence, we have $H_j\to H$ for some $H$ in $\R$, 
 $\nabla u_j\rightharpoonup \nabla u$ weakly in $L^{n-1}(\R^{n-1})$,
  and $u$ solves the $H$-system
\begin{equation}
\label{eq:H}
\Delta_{n-1} u + H\, \p_{x_1} u \wedge \dots \wedge \p_{x_{n-1}} u = 0 \,,
\end{equation}
cf. \cite[Theorem 1.1]{Strzelecki2004}. The interpretation of this system is that any weakly conformal solution  to \eqref{eq:H} is a (possibly branched) parametrization of a hypersurface with mean curvature equal to $H/(n-1)^{\frac{n-1}{2}}$.

In order to prove Theorem \ref{thm:compactintro} we need to show that $(\n u_j)_{j\in \N}\subset L^{n-1}(\R^{n-1})$ is strongly compact, at least up to pre-composition with  M\"obius maps. By a relatively standard argument, cf.\ \cite{Strzelecki2004},  \eqref{eq:approxHsystem} implies the subcritical convergence
\begin{equation}
\label{eq:subcritcompactness}
\n u_j \to \n u \text{ strongly in } L^p(\R^{n-1}) \ \ \text{for all }  1\leq  p<n-1\,,
\end{equation}
for some limit map $u \in W^{1,n-1}(\R^{n-1};\R^n)$. 
In \eqref{eq:approxHsystem} the perturbations $(g_j)_{j\in \N}$ are  vanishing  
only in an \textit{energy-critical space} and, as a consequence, strong compactness fails in general: by \eqref{eq:subcritcompactness}, this lack of compactness is due to the presence of \textit{concentration effects}.  Our task is to show that these effects result solely from the action of the non-compact group $\Mob(\S)$ on the domain.

\subsubsection{Bubbling analysis for $n=3$}

When $n=3$, Brezis--Coron \cite{Brezis1985} and Struwe \cite{Struwe1985} proved in two independent works that the possible concentrations in Palais--Smale sequences for the $H$-system can be characterized very precisely.  This characterization relies on the classification of the global solutions of \eqref{eq:H}, \textit{i.e.}, finite-energy solutions defined over $\R^{2}$: such solutions  turn out to be 
rational functions in the complex variable, see  \cite[Lemma 0.1]{Brezis1985}.  They then show that the sequence  $(u_j)_{j\in \N}$ possibly  concentrates only on a finite number of points, and moreover that at each concentration point it behaves exactly like a concentrating sequence of global solutions, usually known as a \textit{bubble}. 

Besides the classification of global solutions, the analysis of \cite{Brezis1985,Struwe1985} relies on two other ingredients. The first such ingredient is the special  \textit{multilinear structure} of the semilinear term in the $H$-system, which guarantees that the difference $u_j-u$ solves a system very similar to \eqref{eq:approxHsystem}; that this structure is crucial can be seen, for instance,  from the fact that there is no bubbling theorem for Palais--Smale sequences in the closely-related case of harmonic maps into spheres \cite{Parker1996, LinWang}. Its multilinear structure also means that the nonlinearity in the $H$-system lies in the Hardy space $\mathscr H^1(\R^{2};\R^3)$, which is a subspace of $L^1(\R^{2};\R^3)$ suited for harmonic analysis \cite{Coifman1993}. The second ingredient is the Wente-type result
\begin{equation}
\label{eq:wente}
\Delta u \in \mathscr H^{1}(\R^2) \quad \implies \quad u \in C^0(\R^2)\,,
\end{equation}
with a corresponding estimate. Such an estimate  leads  to the strong convergence
\begin{equation}
\label{eq:Linfty2d}
u_j\to u\ \ \text{ in } L^\infty (\R^2) \,,
\end{equation}
from which the above characterization of concentrations follows. 

\subsubsection{The higher dimensional case $n>3$: difficulties}

In the last decades there have been many works studying sequences of approximate solutions to conformally-invariant variational problems in higher dimensions, see \textit{e.g.}\
\cite{Hardt1994a,Toro1995a,Mou1996a,Courilleau2001,Wang2002,Strzelecki2004,Wang2005,Miskiewicz2016}.  
However,   neither  these works nor their methods can lead to a characterization of concentrations in Palais--Smale sequences, as in the case $n=3$. 
In fact,  currently it is not even known that the defect measure associated to the sequence $(|\n u_j|^{n-1})_{j\in \N}$ is concentrated on a finite number of points. We now briefly detail the difficulties associated with the higher dimensional case.

Although the $H$-system \eqref{eq:H} has a multilinear structure in all dimensions,  as soon as $n> 3$  the analogue of \eqref{eq:wente} does not hold, and we have
\begin{equation}
\label{eq:nowente}
\Delta_{n-1} u \in \mathscr H^{1}(\R^{n-1}) \quad \centernot \implies \quad u \in C^0(\R^{n-1})\,,
\end{equation}
see \cite{Firoozye1995,Iwaniec2007a} for counter-examples and an optimal result in this direction.  As far as we know, the only known way of proving strong convergence in $L^{n-1}$ of Palais--Smale sequences is to show the analogue of \eqref{eq:Linfty2d}, \textit{i.e.}, to show that
\begin{equation}
\label{eq:Linfty}
u_j\to u\ \  \text{ in } L^\infty (\R^{n-1}) \,,
\end{equation}
cf.\ \cite[Proposition 2.5]{Mou1996a} for a very general result in this direction. Nonetheless, due to \eqref{eq:nowente},  obtaining \eqref{eq:Linfty} for $n>3$ has remained elusive. 

A different, deeper difference between the cases $n=3$ and $n>3$ is that, in the latter, solutions to the $H$-system \eqref{eq:H} are \textit{not  known to be  necessarily weakly conformal} and, for non-conformal solutions, the geometric interpretation of this system as describing parametrized constant mean curvature hypersurfaces is lost. In particular, there is no characterization of global solutions to \eqref{eq:H}  for $n>3$.

The above difficulties are closely related to challenging regularity problems: the regularity theory of conformally invariant systems of quadratic growth in the plane is well-understood, after Rivi\`ere's foundational work  \cite{Riviere2007}, but extending these results to higher dimensions remains a difficult open problem.  We refer the reader to the survey \cite{Schikorra2017} for an extensive list of references on this problem, as well as  \cite{Wang1999,Duzaar2000a,Fusco2018,Martino2023} for some partial results.

\subsubsection{The higher dimensional case $n>3$: proof sketch}

We now sketch the proof strategy that we pursue. Instead of simply looking at Palais--Smale sequences as in \eqref{eq:approxHsystem},  in order to prove Theorem \ref{thm:compactintro} we will use the full strength of our minimality assumption \eqref{eq:minim_seq}. This approach is inspired by the work of Caldiroli--Musina who, for $n=3$, gave in \cite[Lemma~2.1]{Caldiroli2006}  a direct variational proof of Theorem \ref{thm:compactintro}, see also \cite[Lemma~2.1]{Cerami1984} and \cite[Lemma~2.2]{Struwe1985} in a different context. The proof of \cite{Caldiroli2006}, in particular, bypasses completely the perturbed Euler--Lagrange system  \eqref{eq:approxHsystem}. 

In our setting, we are forced to pass through the analysis of \eqref{eq:approxHsystem} in order to obtain the subcritical compactness \eqref{eq:subcritcompactness}. This compactness allows us to obtain the expansion
\begin{align*}
 \mc D_{n-1}(u_j) & =\mc D_{n-1}(u)+\mc D_{n-1}(u_j-u)+o(1)
\end{align*}
for the conformal Dirichlet energy, as a consequence of the Brezis--Lieb lemma. We emphasize that,  when $n=3$, this expansion is a direct consequence of the weak convergence 
$
\nabla u_j\rightharpoonup \nabla u $ in $ L^2(\R^2) $, since in this case  the conformal energy is quadratic, but in higher dimensions the expansion is not obvious.  The weak convergence alone also leads to a similar expansion for the volume,  namely
$$
\mc V_n(u_j)=\mc V_n(u)+  \mc V_n(u_j-u)+o(1)\,,
$$
thanks to the weak continuity properties of the minors.

Plugging the above expansions into \eqref{eq:minim_seq}
and using the non-negativity of the deficit ensured by Proposition~\ref{prop:isopintro} together with a convexity argument,
leads to the desired strong convergence  $\n u_j \to\n u$ in $L^{n-1}(\R^{n-1})$, provided the weak limit $u$ is non-constant.

In order to ensure that the limit $u$ is not a constant map, one needs to construct maps  $(\phi_j)_{j\in \N}\subset  \Mob(\S)$ which spread out the energy of the modified sequence $(u_j\circ \phi_j)_{j\in \N}$,  
guaranteeing that the  limiting map is non-constant.  These are precisely the maps in \eqref{eq:compactness}.


We note that the above argument relies crucially on the minimality assumption: unlike the case $n=3$,  in general we cannot prove strong convergence in the critical space $L^{n-1}(\R^{n-1})$ if \eqref{eq:minim_seq}  is replaced with
$$\lim_{j\to \infty} \mc E_{n-1}(u_j) 
\leq
 \e\,,$$
where $\e>0$ is arbitrarily small.



\subsection{
Quantitative stability and the proof of Theorem~\ref{thm:qttiveintro}
}\label{quantitative_description}

We now turn to the proof of  the main quantitative result of this work. 
We first note that,   by a standard contradiction argument and the compactness result of Theorem \ref{thm:compactintro}, it suffices to prove Theorem \ref{thm:qttiveintro} for maps $u\in W^{1,n-1}(\S;\R^n)$ such that
\begin{equation*}
\E_{n-1}(u)\ll 1\,, \qquad \fint_{\S}u=0\,, \qquad \big\|u-\mathrm{id}_{\S}\big\|_{W^{1,n-1}(\S)}\ll 1\,.
\end{equation*}
Ultimately,  as in many stability problems, the nonlinear estimate of Theorem \ref{thm:qttiveintro} relies on a suitable \textit{linear inequality}.  After the above reduction,  if one rescales $u$ appropriately and sets $$w:=u-\mathrm{id}_{\S}\,,$$ 
then the crucial inequality concerns the \textit{coercivity} of the second variation of the deficit at the identity,
$$Q_n(w) := \mc E''_{n-1}(\id_{\S})[w,w]\,,$$
see \eqref{eq:Qn} below for the explicit formula of this quadratic form. 
Note that, due to the invariance of the deficit under the action of $\Mob(\S)$, one can only hope for coercivity of $Q_n$ in the $W^{1,2}$-orthogonal complement of the \textit{Lie algebra of infinitesimal M\"obius transformations}, which we denote by $\mob(\S)$. One of the main results in \cite{zemas2022rigidity} asserts precisely that this coercivity holds, see already \eqref{eq:lin_stab}, and the proof relies on a fine interplay between the Fourier decomposition of a $W^{1,2}(\S;\R^n)$-vector field into $\R^n$-valued spherical harmonics and the properties of the quadratic form $\mc V_n''(\id_{\S})$.

When $n=3$, it is not difficult to pass from a linear estimate to a nonlinear one. To do so,  in \cite{zemas2022rigidity} the authors consider a formal Taylor expansion that turns out to be of the form
\begin{equation*}
\mathcal{E}_2(u)=Q_3(w)+o\left(\fint_{\mathbb{S}^2} |\t w|^2\right)\,
\end{equation*}
and in order to conclude,  by the coercivity of $Q_3$ in the orthogonal complement of $\mob( \mathbb{S}^2)$, it suffices to choose $\phi\in  \Mob( \mathbb{S}^2)$ such that $(u\circ \phi - \id_{ \mathbb{S}^2}) \bot \mob( \mathbb{S}^2)$. This is achieved through a topological argument involving the Inverse Function Theorem. \EEE
 
For $n\geq 4$, a similar formal Taylor expansion of the deficit in \eqref{eq:deficit} only gives 
\begin{equation*}
\mathcal{E}_{n-1}(u)=Q_n(w)+\mathcal{O}\left(\fint_{\S} |\t w|^3\right)\,.
\end{equation*} 
Since the higher order terms are now cubic in $\t w$, the linear estimate alone implies the nonlinear estimate
 \eqref{eq:qttive}
 only in a $W^{1,\infty}$-neighborhood of
 $\mathrm{id}_{\S}$, cf.\ \cite[Corollary 1.6]{zemas2022rigidity}.  Generically, it  does not provide a control in the desired optimal conformally invariant $W^{1,n-1}$-norm. 
Moreover, even a suboptimal estimate with an $L^2$-norm in the left hand side of \eqref{eq:qttive} would be hindered by 
 the presence of the higher-order cubic terms in the expansion.

To overcome these issues, we take inspiration from the work of Figalli--Zhang \cite{figalli2022sobolev},
where they prove a sharp quantitative version of the Sobolev inequality  in $\R^n$.
There, the same difficulty is present: the linearized estimate only provides subcritical control.
In order to overcome it, instead of the exact expansion of the $p$-Dirichlet energy, they use a Taylor-type lower inequality, which 
can be thought of as an interpolation between the  first up to quadratic  terms and the highest-order $p$-term. It includes a term controlling the critical norm,
but the quadratic terms  have to be replaced by a lower, nonquadratic quantity.
The core insight allowing to conclude is that, for small perturbations, this lower nonquadratic quantity still provides a control similar to the quadratic terms of the exact expansion.

In our case, we combine the same Taylor-type inequality for the conformal $(n-1)$-Dirichlet energy in \eqref{eq:Dirichlet} (see Lemma \ref{lem_Dirichlet_exp}) with a careful analysis of the terms which arise from expanding the volume $\mathcal V_n(\id_\S +w)$.
This eventually leads to a nonquadratic quantity $\widetilde Q_n(w)$ 
which is similar to but lower than the quadratic form $Q_n(w)$, and to an estimate of the form
\begin{align}
\begin{split}
\mathcal E_{n-1}(\mathrm{id}_{\S}+w)
&
\gtrsim 
\fint_{\S}|\n_T w|^{n-1}\,\mathrm{d}\haus 
\\
&\quad
+ \widetilde Q_n(w) - B[w,\nabla_T w]-\|\nabla_T w\|_{L^2(\S)}^{2+\alpha}\,,
\end{split}
\label{eq:quantitative_proof}
\end{align}
for some bilinear form $B$ and some $\alpha>0$.
The proof is completed by showing that the last line in \eqref{eq:quantitative_proof} is nonnegative for $w\bot\ \mob(\S)$.
We obtain this as a consequence of the linear stability estimate
for $Q_n(w)$ from \cite[Theorem 1.5]{zemas2022rigidity} and a compactness argument similar to \cite[Proposition~3.8]{figalli2022sobolev}.

It is worth noting that for the proof in \cite{figalli2022sobolev},
the nonquadratic quantity analogous to $\tilde Q_n(w)$  only needs to control terms without derivatives.
This is due to the fact that the denominator in the deficit of the Sobolev inequality contains no derivatives,
and is a key feature for the compactness argument in \cite[Proposition~3.8(iii)]{figalli2022sobolev}. Instead,  the denominator in our deficit \eqref{eq:deficit} does contain derivatives,
and the highest order term cannot absorb all other gradient terms.
The precise structure of these gradient terms in the last line of \eqref{eq:quantitative_proof} is crucial in order to make our compactness argument work.

%

\subsection{Outline of the paper} After fixing some notation in the next subsection, in Section \ref{sec:prelims} we collect some basic facts about the notion of topological degree for Sobolev mappings (adapted to our setting), the volume functional $\mc V_n$, and also the geometry of M\"obius transformations of $\S$. In the final subsection \ref{subsec: 1st_2nd_variation} therein, we derive the first variation of the deficit $\mc E_{n-1}$ introduced in \eqref{eq:deficit}, discuss some standard properties of its critical points, and we also include the linear stability estimate obtained originally in \cite[Theorem 1.5]{zemas2022rigidity}, which as described will be crucial in obtaining the nonlinear estimate. Sections \ref{sec:param_conformal}, \ref{sec:compactness} and \ref{sec:quantitative_stability} are devoted to the proofs of Proposition \ref{prop:isopintro} and Theorems \ref{thm:compactintro}, \ref{thm:qttiveintro} respectively, the proofs being split in several different steps of possibly independent interest in each case.


\subsection{Notation}\label{Notation}

The following standard notation will be adopted throughout the paper.\\[-0.5cm]

{\renewcommand{\cellalign}{cl}
\setlength{\extrarowheight}{15pt}
\begin{longtable}[l]{l l }
$\{e_i\}_{i=1}^n,\  \langle \cdot, \cdot \rangle,\ |\cdot| $ & the standard orthonormal basis, inner product, norm in $\R^n$\\[-3pt]
$A:B,\ |A|$ & the Frobenius inner product and norm  for matrices in $\R^{n\times m}$\\[-3pt]
$A^t$ & the transpose of a matrix  or the adjoint of a linear map \\[-1pt]
$a\otimes b$ &  the matrix with entries $(a\otimes b)_{ij}:=a_ib_j$, where $a\in \R^n, b\in \R^m$\\[-3pt]
$\S, \B$ &  the subsets $\{x\in \R^n\colon |x|=1\}$ and $\{x\in \R^n\colon |x|\leq 1\}$ of $\R^n$\\[-3pt]
$\omega_n$ & the Euclidean volume of the unit ball in $\R^n$\\[-3pt]
$\{\tau_1,\dots,\tau_{n-1}\}$ & \makecell{a positively oriented orthonormal frame for $T_x\S$ so that, for all \\  $x\in \S$, $\{\tau_1(x),\cdots,\tau_{n-1}(x),x\}$ is a positively oriented frame of $\R^n$}\\[-3pt]
$\mathcal{H}^k, \L^d$ & the $k$-dimensional Hausdorff and $d$-dimensional Lebesgue measures\\[-3pt]
$\textup O(n),\ \textup{SO}(n)$ & the orthogonal and special orthogonal groups of $\R^n$\\[-3pt]
$I_n,\ I_x$ & the identity matrix in $\R^n$ and the identity transformation on $T_x\S$ \\[-3pt]
$\t u$ &\makecell{ the tangential gradient of $u\colon \S\to \R^n$, represented in local\\ coordinates by the $n\times(n-1)$ matrix with entries $(\t u)_{lm}=\partial_{\tau_m}u^l$}\\[-3pt]
$J(u)$ &\makecell{  the tangential Jacobian of $u$, $J(u):=\partial_{\tau_1} u \wedge \dots \wedge \partial_{\tau_{n-1}} u$ } \\[-3pt]
$\mathrm{id}_{\S}, P_T$ & the identity map on $\S$ and its gradient $\t\mathrm{id}_{\S}$\\[-3pt]
$\mathrm{div}_{\S}$, $\Delta_{\S}$& the tangential divergence and the Laplace-Beltrami operator on $\S$\\[-3pt]
$C^0_b(\R^n;\R)$ & the space of continuous bounded functions from $\R^n$ to $\R$\\[-3pt]
$C^k,\ C^{k,\alpha}$ & \makecell{the spaces of $k$-times continuously differentiable  maps
 and of maps\\ with $\alpha$-Hölder continuous derivatives of order $k$}\\[-3pt]
$L^p, W^{s,p}$ & \makecell{ the standard Lebesque or fractional Sobolev spaces on $\S$,  where\\ the norms are taken with respect to the normalized $\mathcal{H}^{n-1}$-measure}\\[1pt]
$\lesssim_{M_1,M_2,\dots}, \sim_{M_1,M_2,\dots}$ & \makecell{ the corresponding inequality or equality holds up to a constant multiplicative factor \\
 that  depends only on the parameters $M_1, M_2,\dots$, or only on the \\
 dimension  when the subscripts are absent}\\[-3pt]
$c,C>0$ & \makecell{constants whose value is allowed to vary from line to line but which\\ depend only on the dimension}
\\[-3pt]
$ \mathbf 1_D$ &  the indicator function of a set $D\subset \R^n$
\end{longtable}
}

\section{Preliminaries}\label{sec:prelims}

\subsection{Multilinear algebra and calculus on the sphere}
In what follows, we always suppose that $n\geq 3$. Given vectors $v_1,\dots, v_{n-1}\in \R^n$, we define their \textit{vector product} $v_1\wedge \dots \wedge v_{n-1}\in \R^n$ (identified with its Hodge dual) as the unique $v\in \R^n$ such that
$$\langle v,w\rangle = \det(v_1,\dots, v_{n-1},w) \quad \text{for all } w\in \R^n,$$
where the expression on the right-hand side is the determinant of the $n\times n$ matrix with columns $v_1, \dots, v_{n-1}, w$.
The vector product is intimately connected to the cofactor matrix, which for $A\in \R^{n\times n}$ satisfies the identity 
\begin{equation}
\label{eq:defcof}
(\cof A)^t A = (\det A)\, I_n\,.
\end{equation} 
In particular, we have
\begin{equation}
\label{eq:cof}
(\cof A)(v_1\wedge \dots \wedge v_{n-1})=Av_1 \wedge \dots \wedge A v_{n-1} \ \forall v_1,\dots, v_{n-1}\in \R^n\,.
\end{equation}
With these notations, for the local orthonormal frame $\{\tau_1,\dots, \tau_{n-1}\}$ of $T_x\S$,
we have
\begin{equation} \label{eq:nu}
\tau_1\wedge \dots \wedge \tau_{n-1} = x\,.
\end{equation}

In order to facilitate many calculations in the paper, it will be useful to consider extensions of maps $u\in W^{1,n-1}(\mb S^{n-1};\R^n)$.
To that end, we note that $W^{1,n-1}(\S;\R^n)\subset W^{1-\frac{1}{n},n}(\S;\R^n)$ (see \textit{e.g.} \cite[Theorem~17.85]{Leoni2017}), 
and the space $W^{1-\frac{1}{n},n}(\S;\R^n)$
is
the image of $W^{1,n}(\B;\R^n)$ under the trace operator \cite[Theorems~18.27-28]{Leoni2017}.
In particular, for any $u\in W^{1,n-1}(\mb S^{n-1};\R^n)$,  there exists $U\in W^{1,n}(\B;\R^n)$ such that
\begin{align}\label{eq:Sobolev_bound_for_U}
U|_{\S}=u,
\quad
\text{and }
\quad
\|U\|_{W^{1,n}(\B)}\lesssim \|u\|_{W^{1,n-1}(\S)}\,.
\end{align} 
Applying \eqref{eq:cof} with $A=\n U$ and using also \eqref{eq:nu}, we see that for $\H^{n-1}$-a.e. $x\in \S$,
\begin{equation}\label{eq:cofeqwedge}
\cof(\n U) x = (\n U) \tau_1 \wedge\dots \wedge (\n U) \tau_{n-1}  = \bigwedge_{i=1}^{n-1} \p_{\tau_i} u =: J(u)\,.
\end{equation}
Let us also note here, for later use, the Hadamard-type inequality
\begin{equation}\label{eq:Hadamard}
|J(u)| =|\cof(\nabla U)x | \leq \left(\frac{|\n_T u|^2}{n-1}\right)^{\frac{n-1}{2}} \ \H^{n-1}\tp{-a.e.\ on \ } \S\,,
\end{equation}
which is a straightforward consequence of the Cauchy--Schwarz and arithmetic mean--geometric mean inequalities.

\subsection{On the topological degree and the volume functional}
\label{sec:degree}

The reader may have noticed that it is not immediately clear how to make sense of \eqref{eq:vol} for maps $u\in W^{1,n-1}(\S;\R^n)$, although 
 the integral therein 
is clearly meaningful if in addition $u$ is essentially bounded.  In this subsection we 
recall
 how to interpret \eqref{eq:vol} and, on the way to do so, we will gather some useful properties of the volume functional and of related objects, in particular the notion of (local) topological degree.  In fact, the issue of giving a meaning to \eqref{eq:vol} for maps in the critical space is closely related to the theory of 
 VMO-degree developed by Brezis and Nirenberg \cite{Brezis1995,Brezis1996}.

We begin by stating a standard identity relating bulk integrals of $\det \n U$, with $U$ as in \eqref{eq:Sobolev_bound_for_U}, 
 with boundary integrals of $J(u)$; this identity follows from \textit{Piola's identity}
 \begin{equation}\label{eq:Piola}
 \mathrm{div}\cof\nabla U=0 \ \mathrm{ in } \ \mathcal{D}'(\mb B^n)\,,
 \end{equation}  
the chain rule and \eqref{eq:defcof}, see \textit{e.g.}\ \cite[Lemma 3.14]{Hencl2014a} for further details.

\begin{lemma}\label{lemma:IBPdet}
For all $U\in W^{1,n}(\mb B^n;\R^n)$ with $u:=U|_{\S}\in W^{1,n-1}(\S;\R^n)$ 
 as in \eqref{eq:Sobolev_bound_for_U},  we have
\begin{equation}\label{eq:bulk_surface}
\int_{\mb B^n} (\ddiv V)(U(x)) \det \n U(x) \d x = \int_{\mb S^{n-1}} \langle V (U(x)) ,  \cof(\n U) x \rangle \d \H^{n-1}\,,
\end{equation}
whenever  $V\in C^1_c(\R^n; \R^n)\,.$
\end{lemma}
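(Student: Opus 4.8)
The plan is to reduce the identity to the classical change-of-variables/divergence formula for smooth maps and then pass to the limit by density. First I would treat the case where $U\in C^\infty(\overline{\mb B^n};\R^n)$: given $V\in C^1_c(\R^n;\R^n)$, the pullback $\omega_U := U^\#\big((\ddiv V)\,\d\L^n\big)$ equals $(\ddiv V)(U(x))\det\nabla U(x)\,\d x$ by the standard formula for pulling back the top form under a smooth map, while writing $(\ddiv V)\,\d\L^n = \d\alpha$ for the $(n-1)$-form $\alpha$ obtained by contracting the volume form with $V$, and using $U^\#\d\alpha = \d(U^\#\alpha)$ together with Stokes' theorem, gives $\int_{\mb B^n}(\ddiv V)(U)\det\nabla U = \int_{\S}U^\#\alpha$. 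The boundary term $U^\#\alpha$ evaluated on the oriented frame $\{\tau_1,\dots,\tau_{n-1}\}$ of $T_x\S$ produces exactly $\langle V(U(x)),\, \partial_{\tau_1}U\wedge\dots\wedge\partial_{\tau_{n-1}}U\rangle = \langle V(U(x)),\cof(\nabla U)x\rangle$ by \eqref{eq:cofeqwedge} and the definition of the wedge product, which is \eqref{eq:bulk_surface} for smooth $U$. Equivalently, one can bypass differential forms entirely and argue algebraically: expanding $\ddiv\big((\cof\nabla U)^t\, (V\circ U)\big)$ and using Piola's identity \eqref{eq:Piola} together with the chain rule and \eqref{eq:defcof} shows that this divergence equals $(\ddiv V)(U)\det\nabla U$ pointwise, and then \eqref{eq:bulk_surface} follows from the ordinary divergence theorem on $\mb B^n$. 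This is precisely the computation referenced via \cite[Lemma 3.14]{Hencl2014a}, so I would invoke it here and keep the smooth case short.

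Next I would remove the smoothness assumption by approximation. Fix $U\in W^{1,n}(\mb B^n;\R^n)$ with trace $u\in W^{1,n-1}(\S;\R^n)$, and choose $U_k\in C^\infty(\overline{\mb B^n};\R^n)$ with $U_k\to U$ in $W^{1,n}(\mb B^n;\R^n)$ and, crucially, with the traces $u_k:=U_k|_{\S}$ converging to $u$ in $W^{1,n-1}(\S;\R^n)$ — this can be arranged since $W^{1,n}(\mb B^n)$ maps onto $W^{1-1/n,n}(\S)\supset$ (densely) the relevant trace space, or more concretely by mollifying a suitable extension of $u$; alternatively one first fixes $U_k\to U$ in $W^{1,n}$ and controls the boundary traces via the trace inequality. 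On the left-hand side, $\ddiv V$ is bounded and continuous, $U_k\to U$ in $L^n$ hence (up to a subsequence) a.e., so $(\ddiv V)(U_k)\to(\ddiv V)(U)$ boundedly a.e.; and $\det\nabla U_k\to\det\nabla U$ in $L^1(\mb B^n)$ because the determinant is a continuous $n$-homogeneous polynomial in the gradient and $\nabla U_k\to\nabla U$ in $L^n$. Combining these gives convergence of the bulk integrals. On the right-hand side, $V\circ U_k\to V\circ U$ boundedly a.e. on $\S$ and $\cof(\nabla U_k)x = J(u_k)\to J(u) = \cof(\nabla U)x$ in $L^{1}(\S)$, again because $\cof$ is a continuous $(n-1)$-homogeneous polynomial and $\nabla_T u_k\to\nabla_T u$ in $L^{n-1}(\S)$ (using $|J(u_k)-J(u)|\lesssim (|\nabla_T u_k|+|\nabla_T u|)^{n-2}|\nabla_T u_k-\nabla_T u|$ and Hölder with exponents $\tfrac{n-1}{n-2},n-1$). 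Hence the boundary integrals converge as well, and passing to the limit in the smooth identity yields \eqref{eq:bulk_surface} in general.

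The main obstacle I anticipate is the approximation step on the boundary: one must produce smooth approximants $U_k$ of $U$ whose \emph{traces} converge to $u$ in the $W^{1,n-1}(\S)$-norm, not merely in the weaker trace norm $W^{1-1/n,n}(\S)$ that comes for free from $W^{1,n}(\mb B^n)$-convergence. The clean way around this is to exploit the surjectivity of the trace operator quantitatively: approximate $u$ first in $W^{1,n-1}(\S;\R^n)$ by smooth maps $u_k$, lift each $u_k$ to $W^{1,n}(\mb B^n;\R^n)$ via the bounded right inverse of the trace from \eqref{eq:Sobolev_bound_for_U}, mollify if necessary so that the lift is smooth up to the boundary, and check that the bulk side still converges — which it does, since the right inverse is bounded $W^{1-1/n,n}(\S)\to W^{1,n}(\mb B^n)$ and $u_k\to u$ in $W^{1,n-1}(\S)\hookrightarrow W^{1-1/n,n}(\S)$ forces $U_k\to U$ in $W^{1,n}(\mb B^n)$ up to replacing $U$ by this particular extension; since both sides of \eqref{eq:bulk_surface} depend only on $u$ (the left side is invariant under changing the extension, by the smooth case applied to the difference, or by the Piola-identity computation), this is no loss. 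All remaining estimates are the routine Hölder/continuity-of-polynomial-maps arguments indicated above.
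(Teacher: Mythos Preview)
Your proof is correct and follows exactly the route the paper indicates before the lemma---Piola's identity, the chain rule, and \eqref{eq:defcof} for smooth $U$, then density---which the paper does not spell out but simply defers to \cite[Lemma~3.14]{Hencl2014a}. The one wrinkle you flag is real but your fix works once phrased carefully: the bulk integral depends only on $u$ not literally by ``the smooth case applied to the difference'' (the integrand is nonlinear in $U$), but because the smooth identity yields the same boundary term for any two smooth maps with the same trace, and then one approximates $U-\tilde U\in W^{1,n}_0(\mb B^n)$ by $C^\infty_c$ maps and passes to the limit in the bulk.
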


If $U\in  (  W^{1,n}\cap L^\infty ) (\B; \R^{n})$ then,  by taking suitable approximations of the vector field $V(x)=x$, Lemma \ref{lemma:IBPdet} combined with \eqref{eq:cofeqwedge} yield the identity
\begin{equation}\label{eq:bulkbdry}
\fint_{\mb B^n} \det \n U \d x = \fint_{\S} \langle u, J(u)\rangle \d \H^{n-1}= \mc V_n(u)\,,
\end{equation}
where the last equality is simply definition \eqref{eq:vol}.  
The integral on the left-hand side is meaningful for $U\in W^{1,n}(\B;\R^n)$, and hence \eqref{eq:bulkbdry} can be used to define the volume  $\mc V_n(u)$ for $u\in W^{1,n-1}(\S;\R^n)$.
In fact, the same reasoning allows  to extend $\mc V_n$ to the trace space of $W^{1,n}(\B;\R^n)$
(similar considerations are exploited in \cite[Section~2]{BrezisNguyen2011} to study fine properties of Jacobian determinants).
Note that, since the extension $U\in W^{1,n}(\B;\R^n)$ depends continuously on $u \in W^{1,n-1}(\S;\R^n)$,  see 
\eqref{eq:Sobolev_bound_for_U},
this definition of $\mathcal V_n(u)$ coincides with the one used \textit{e.g.} in \cite[Corollary~3.5]{Mou1996} by combining compensation properties of Jacobians \cite{Coifman1993} and duality properties of Hardy spaces \cite{FeffermanStein1972}.

We now discuss the notion of local topological degree, which is closely related to the volume functional. Let us first explain how to define the \textit{local degree} for maps $U\in W^{1,n}(\B;\R^n)$.  First recall that, in the case of $U\in C^1(\overline\B;\R^n)$, for every \textit{regular value} $y$ of $U$ such that $y\notin U(\S)$, the \textit{local degree} at $y$ is defined as 
\begin{equation*}
\mathrm{deg}(U, \B; y):=\sum_{z\in U^{-1}(y)} \mathrm{sgn}(\mathrm{det}\nabla U)(z)\,.
\end{equation*}
By the fact that $\L^n(U(\S))=0$ and Sard's theorem, $\mathrm{deg}(U, \B; y)$ is well-defined for $\L^n$-a.e. $y\in \R^n$. In this case,  the generalized area formula immediately yields the identity
\begin{equation}
\label{eq:deg=jac}
\int_{\mb B^n} \eta(U(x)) \det \n U(x) \d x= \int_{\R^n} \eta(y) \deg(U,\mb B^n;y) \d y,
\end{equation}
whenever $\eta \in C_b^0(\R^n;\R)$, see \cite[Corollary 1]{Sverak1988}.

We now want to define the local degree for a general map $U\in W^{1,n}(\B;\R^n)$
with trace $u:=U|_{\S}\in W^{1,n-1}(\S;\R^n)$,  
 precisely in such a way that \eqref{eq:deg=jac} stays true,
and so we will essentially use this identity \textit{as a definition}. 
To be precise, consider the distribution $T_U\in \mathcal D'(\R^n)$ defined as
\begin{align*}
 ( T_U,\varphi ):= \int_{\B} \varphi(U(x)) \det\nabla U(x)\, \mathrm{d}x\,,
\end{align*}
 where $\varphi \in C^\infty_c(\R^n)$  and $(\cdot,\cdot)$ stands for the duality pairing.
 This distribution can be 
 identified with a finite Radon measure, but we have in fact:

\begin{lemma}\label{lem:TUBV}
For any $U\in W^{1,n}(\B;\R^n)$
with trace $u:=U|_{\S}\in W^{1,n-1}(\S;\R^n)$, 
we have  $T_U\in \tp{BV}(\R^n;\mathbb Z)$.
\end{lemma}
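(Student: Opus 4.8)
The goal is to show that the distribution $T_U$, defined by testing $\varphi \mapsto \int_{\B} \varphi(U)\det\nabla U\,\mathrm dx$, is in fact a $BV$ function taking integer values. The plan is to proceed by approximation from the smooth case, where the identity \eqref{eq:deg=jac} tells us that $T_U$ is represented by the function $y\mapsto \deg(U,\B;y)$, which is integer-valued and compactly supported. The first step is thus to recall (or establish) the uniform $BV$ bound $\|\deg(U,\B;\cdot)\|_{BV(\R^n)}\lesssim \|\nabla U\|_{L^n(\B)}^{n-1}\|\nabla u\|_{L^{n-1}(\S)}$ for $U\in C^1(\overline\B;\R^n)$ (or $U$ smooth up to the boundary). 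This is a classical fact: the distributional gradient of the degree function can be computed via the coarea/slicing formula and controlled by the $(n-1)$-dimensional measure of $U(\S)$ counted with multiplicity, i.e.\ by $\int_{\S}|J(u)|\,\mathrm d\H^{n-1}$, which by the Hadamard inequality \eqref{eq:Hadamard} is bounded by $\int_{\S}(|\nabla_T u|^2/(n-1))^{(n-1)/2}\,\mathrm d\H^{n-1}$; the bulk factor $\|\nabla U\|_{L^n(\B)}$ enters when one also wants $L^1$ control of the degree itself, via \eqref{eq:bulkbdry} and Hölder. I would cite \cite{Sverak1988} or \cite{Brezis1995,Brezis1996} for the precise statement.

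\textbf{Second step: approximation.} Take $U_k\in C^\infty(\overline\B;\R^n)$ with $U_k\to U$ strongly in $W^{1,n}(\B;\R^n)$ and such that the traces $u_k=U_k|_{\S}\to u$ strongly in $W^{1,n-1}(\S;\R^n)$ — such an approximating sequence exists by density and the trace estimate \eqref{eq:Sobolev_bound_for_U} (one can mollify or use a standard smoothing-on-$\B$ argument preserving boundary regularity). Then $T_{U_k}\to T_U$ in $\mathcal D'(\R^n)$: indeed, for fixed $\varphi\in C^\infty_c(\R^n)$, $\varphi(U_k)\to\varphi(U)$ in $L^{n/(n-1)}$ (since $U_k\to U$ in $L^n$ hence a.e.\ along a subsequence, and $\varphi$ is bounded Lipschitz) while $\det\nabla U_k\to\det\nabla U$ weakly in $L^1$ by the standard weak continuity of Jacobians under $W^{1,n}$ convergence. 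By Step 1 the functions $\deg(U_k,\B;\cdot)$ are bounded in $BV(\R^n)$, uniformly supported in a fixed large ball (their supports lie in the convex hull of $U_k(\overline\B)$, which is uniformly bounded). By $BV$-compactness, a subsequence converges in $L^1_{\mathrm{loc}}$ and weakly-$*$ in $BV$ to some $f\in BV(\R^n)$. Passing to the limit in the $\mathcal D'$ identification identifies $T_U$ with $f$, so $T_U\in BV(\R^n)$.

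\textbf{Third step: integrality.} It remains to check $f$ is $\Z$-valued a.e. Each $\deg(U_k,\B;\cdot)$ is integer-valued; $L^1_{\mathrm{loc}}$ convergence gives a.e.\ convergence along a further subsequence, and a pointwise a.e.\ limit of integer-valued functions is integer-valued. This closes the argument.

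\textbf{Main obstacle.} The heart of the matter is Step 1: the uniform $BV$ estimate for the degree function of a smooth map in terms of the conformally-natural norms of $U$ and $u$. The subtlety is that one needs the bound to survive the passage to the critical Sobolev class $W^{1,n-1}$ on the boundary — i.e.\ the right-hand side must involve only $\|\nabla U\|_{L^n(\B)}$ and $\|\nabla u\|_{L^{n-1}(\S)}$ and not, say, $\|U\|_{L^\infty}$ — so that the approximating sequence genuinely controls the limit. Concretely one must show $|D\deg(U,\B;\cdot)|(\R^n)=\int_{\S}|J(u)|\,\mathrm d\H^{n-1}$ (or $\lesssim$), which is exactly the statement that the boundary of the "counted-with-multiplicity image" of $\B$ is carried by $U(\S)$ — a slicing/coarea computation that is standard for $C^1$ maps but must be set up carefully. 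Once this is in hand, the rest is routine compactness. Everything else — the weak continuity of the Jacobian, the density of smooth maps, the stability of integrality — is classical.
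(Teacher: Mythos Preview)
Your approach is essentially the same as the paper's: approximate by smooth maps, establish a uniform $BV$ bound on the degree via the boundary integral $\int_{\S}|J(u_k)|\,\d\haus$ (the paper does this directly by duality and the integration-by-parts identity \eqref{eq:bulk_surface} rather than coarea/slicing, but the content is identical), and conclude by $BV$ compactness and pointwise integrality.

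One small correction: your claim that the supports of $\deg(U_k,\B;\cdot)$ lie in a fixed ball is not justified, since $W^{1,n}(\B)\not\hookrightarrow L^\infty$ and hence $\sup_k\|U_k\|_{L^\infty}$ need not be finite. Fortunately this is not needed: the global $L^1$ bound follows directly from the area formula, $\|\deg(U_k,\B;\cdot)\|_{L^1(\R^n)}\leq\int_{\B}|\det\nabla U_k|\,\d x\lesssim\|\nabla U_k\|_{L^n(\B)}^n$ (this is what the paper uses, and is cleaner than invoking \eqref{eq:bulkbdry}, which only gives $\int\deg$, not $\int|\deg|$). With $L^1(\R^n)$ plus $|D\cdot|(\R^n)$ uniformly bounded, $BV$ compactness yields $L^1_{\mathrm{loc}}$ convergence without any support restriction, and the rest of your argument goes through.
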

\begin{proof}
Consider an approximating sequence $(U_j)_{j\in \N}\subset C^\infty(\overline\B;\R^n)$ such that 
\begin{equation}\label{eq:approx}
U_j\rightarrow U\ \L^n\text{-a.e. in } \B 
\text{ and  strongly   in } W^{1,n}(\B;\R^n), \   
\sup_{j\in \N}\|u_j\|_{W^{1,n-1}(\S)}<+\infty\,,
\end{equation} 
where as usual $u_j := U_j|_\S$.
It suffices to show that
\begin{enumerate}
\item\label{it:wstarconv} $T_{U_j} \wstar T_U$ in $\mathcal D'(\R^n)$,
\item\label{it:equibdd} $(T_{U_j})_{j\in \N}$ is equibounded in $BV(\R^n)$,
\end{enumerate}
as then, by compactness, $T_U\in BV(\R^n;\Z)$, since $T_{U_j}\to T_U$ in  $L^1_\tp{loc}(\R^n)$ and $T_{U_j}$ is integer-valued 
 for every $j\in \N$.

We note that \ref{it:wstarconv} follows easily from the fact that $\det\nabla U_j\to \det\nabla U$ in $L^1(\B)$ and $U_j\to U$ $\L^n$-a.e. on $\B$. For \ref{it:equibdd},  by the smoothness of $(U_j)_{j\in \N}$ and \eqref{eq:approx}, we have
\begin{align*}
\sup_{j\in\N}\|T_{U_j}\|_{L^1(\R^n)}
&
\lesssim
   \sup_{j\in\N}\int_{\B} |\nabla U_j|^n\, \mathrm{d}x<+\infty\,,   
\end{align*}
and by \eqref{eq:bulk_surface}, \eqref{eq:Hadamard} and \eqref{eq:approx}, 
\begin{align*}
\sup_{j\in\N}|DT_{U_j}|(\R^n)&=\sup_{j\in\N}\sup_{\psi\in C^1_c(\R^n;\R^n)}\bigg\{\int_{\B} (\ddiv \psi)(U_j(x)) \det\nabla U_j(x)\, \mathrm{d}x\colon \|\psi\|_{L^\infty(\R^n)}\leq 1\bigg\}  \\
&= \sup_{j\in\N}\sup_{\psi\in C^1_c(\R^n;\R^n)} \bigg\{\int_{\mb S^{n-1}} \langle \psi\circ U_j,  \cof(\n U_j) x \rangle\, \d \H^{n-1}\colon \|\psi\|_{L^\infty(\R^n)}\leq 1\bigg\} \\
&\leq \sup_{j\in\N}\int_{\S} |J(u_j)|\,\d \H^{n-1} \\
&  \lesssim   \sup_{j\in\N}\int_{\S} |\nabla_T u_j|^{n-1}\,\d \H^{n-1}<+\infty\,,
\end{align*}
completing the proof.
\end{proof}

We now define the local degree of $U\in W^{1,n}(\B;\R^n)$ 
 with $U|_{\S}\in W^{1,n-1}(\S;\R^n)$, 
  as
\begin{equation}\label{eq:general_degreee}
\deg(U,\B;\cdot ):=T_U\,,
\end{equation}
and by Lemma \ref{lem:TUBV} we see that the local degree is essentially integer-valued and satisfies the identity \eqref{eq:deg=jac},  as the latter is valid along the sequence $(U_j)_{j\in \N}\subset C^\infty( \overline{\B} ;\R^n)$,   and $T_{U_j}\wstar T_U$ in $\tp{BV}_{\mathrm{loc}}(\R^n)$, as we saw in \ref{it:wstarconv}-\ref{it:equibdd} 
above. Summarizing this discussion, we have shown:

\begin{theorem}\label{thm:integral_degree_formula}
For any $U\in W^{1,n}(\mb B^n;\R^n)$ with trace $u:=U|_{\S}\in W^{1,n-1}(\S;\R^n)$ and the local degree being defined as in \eqref{eq:general_degreee}, 
the identity \eqref{eq:deg=jac} holds,  whenever  $\eta\in C_b^0(\R^n;\R)$.
\end{theorem}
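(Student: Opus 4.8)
The plan is to establish Theorem~\ref{thm:integral_degree_formula} as a direct corollary of the approximation scheme already set up in the proof of Lemma~\ref{lem:TUBV}. Concretely, I would fix $U\in W^{1,n}(\B;\R^n)$ with trace $u\in W^{1,n-1}(\S;\R^n)$, pick an approximating sequence $(U_j)_{j\in\N}\subset C^\infty(\overline\B;\R^n)$ as in \eqref{eq:approx}, and recall that for smooth maps the generalized area formula gives \eqref{eq:deg=jac} exactly, i.e.
\begin{equation*}
\int_{\B}\eta(U_j(x))\det\nabla U_j(x)\,\mathrm{d}x=\int_{\R^n}\eta(y)\deg(U_j,\B;y)\,\mathrm{d}y
\qquad\forall\,\eta\in C_b^0(\R^n;\R)\,.
\end{equation*}
The goal is then simply to pass to the limit $j\to\infty$ on both sides, with the right-hand side limit identified with $\int_{\R^n}\eta(y)\,\mathrm{d}T_U(y)=\int_{\R^n}\eta\,\mathrm{d}\,\deg(U,\B;\cdot)$ by the very definition \eqref{eq:general_degreee}.

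For the left-hand side, I would argue that $\det\nabla U_j\to\det\nabla U$ strongly in $L^1(\B)$ — this is the standard continuity of the Jacobian determinant under strong $W^{1,n}$ convergence, coming from the multilinearity of $\det$ and H\"older's inequality — while $\eta\circ U_j\to\eta\circ U$ boundedly and $\L^n$-a.e.\ on $\B$ (after passing to a further subsequence so that $U_j\to U$ a.e., which \eqref{eq:approx} already provides), with $\|\eta\circ U_j\|_{L^\infty}\le\|\eta\|_{C^0_b}$. Hence the product $\eta(U_j)\det\nabla U_j$ converges in $L^1(\B)$ to $\eta(U)\det\nabla U$, so the left-hand integrals converge to $\int_{\B}\eta(U(x))\det\nabla U(x)\,\mathrm{d}x$, which is exactly $(T_U,\eta)$ extended from $C^\infty_c$ to $C^0_b$ by the finiteness of the total mass $|T_U|(\R^n)<\infty$ established in Lemma~\ref{lem:TUBV}. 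For the right-hand side, from the proof of Lemma~\ref{lem:TUBV} we already know $T_{U_j}\wstar T_U$ in $\mathcal D'(\R^n)$ and that $(T_{U_j})_{j\in\N}$ is equibounded in $\mathrm{BV}(\R^n)$, hence precompact in $L^1_{\loc}(\R^n)$; combined with the uniform bound $\sup_j\|T_{U_j}\|_{L^1(\R^n)}<\infty$ and the fact that the $\mathrm{BV}$ bound controls tightness (the support-at-infinity mass is uniformly small, since $|DT_{U_j}|(\R^n)$ is bounded and each $T_{U_j}$ is integer-valued with bounded $L^1$ norm, forcing the measures to live essentially in a fixed bounded set up to small mass), one gets $T_{U_j}\to T_U$ strongly in $L^1(\R^n)$. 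This upgrades the weak-$*$ convergence to testing against arbitrary $\eta\in C^0_b(\R^n;\R)$: $\int\eta\,T_{U_j}\to\int\eta\,T_U$.

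Putting the two limits together yields \eqref{eq:deg=jac} for $U$ and all $\eta\in C^0_b(\R^n;\R)$, which is the claim. The only delicate point — and the one I would write out with some care — is the tightness/equi-integrability needed to pass from weak-$*$ convergence of the degree functions against $C^\infty_c$ test functions to convergence against merely bounded continuous $\eta$, i.e.\ ruling out escape of mass to infinity; this is where the $\mathrm{BV}(\R^n;\Z)$ structure from Lemma~\ref{lem:TUBV} is essential, since an integer-valued $\mathrm{BV}$ function with controlled total variation and $L^1$ norm cannot have mass leaking to infinity. Everything else is routine: the smooth-case identity \eqref{eq:deg=jac} is cited from \cite{Sverak1988}, and the $L^1$-continuity of the determinant together with dominated convergence handles the bulk side. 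One should also note in passing that $\deg(U,\B;\cdot)$ is independent of the chosen extension $U$ of a given boundary trace $u$ — but this is not needed for the statement as phrased, which fixes $U$.
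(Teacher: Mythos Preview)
Your approach matches the paper's sketch: approximate $U$ by smooth $U_j$, use the classical identity \eqref{eq:deg=jac} for each $U_j$, and pass to the limit; you also correctly flag the extension from $C^\infty_c$ to $C^0_b$ test functions as the one delicate point. However, your resolution of that point has a gap. The claim that ``an integer-valued $\tp{BV}$ function with controlled total variation and $L^1$ norm cannot have mass leaking to infinity'' is false: the sequence $f_j=\mathbf 1_{B_1(je_1)}$ satisfies $\|f_j\|_{L^1}=\omega_n$ and $|Df_j|(\R^n)=n\omega_n$ uniformly in $j$, yet $f_j\to 0$ in $L^1_{\loc}$ while $\int_{\R^n} f_j\equiv\omega_n$. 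So uniform $\tp{BV}(\R^n;\Z)$ bounds do not by themselves yield tightness, and you cannot conclude $T_{U_j}\to T_U$ strongly in $L^1(\R^n)$ from them.

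Fortunately this step is unnecessary, and your own left-hand-side argument already contains the fix. You note that $\eta\mapsto\int_{\B}\eta(U)\det\nabla U$ extends $(T_U,\cdot)$ from $C^\infty_c$ to $C^0_b$; since Lemma~\ref{lem:TUBV} gives $T_U\in L^1(\R^n)$, so does $\eta\mapsto\int_{\R^n}\eta\,T_U$. The two extensions agree on $C^\infty_c$ by the definition of $T_U$, and both are continuous under bounded pointwise convergence of the test function (dominated convergence, with majorants $\|\eta\|_\infty|\det\nabla U|\in L^1(\B)$ and $\|\eta\|_\infty|T_U|\in L^1(\R^n)$ respectively). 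Approximating any $\eta\in C^0_b$ by a uniformly bounded sequence $\varphi_k\in C^\infty_c$ with $\varphi_k\to\eta$ pointwise then yields \eqref{eq:deg=jac} directly, with no need to pass to the limit in $\int\eta\,T_{U_j}$ for non-compactly-supported $\eta$.
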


\begin{remark}[Comparison with  VMO-degree]
Let $U\in W^{1,n}(\B;\R^n)$. Due to the inclusion $W^{1,n}(\B;\R^n)\subset\tp{VMO}(\B;\R^n)$, if 
$y\not \in \overline{ U(\S)}$
 then $\deg(U,\B;y)$ can be defined as in  \cite[Section II.2]{Brezis1996}, and it coincides with the limit of $ ( \deg(U_j,\B;y) )_{j\in \N} $ for a smooth approximating sequence; in particular, it also coincides with the notion of degree defined in \eqref{eq:general_degreee}. 
 But $\overline{U(\S)}$ may 
  generically 
   have positive 
   $\L^n$-measure, so this does not define $\deg(U,\B;\cdot)$ 
   $\L^n$-a.e.\ in $\R^n$. See also \cite[page 114]{Sverak1988} for related results and discussion.

In fact, under our additional assumption that $u:=U|_{\S}\in W^{1,n-1}(\S;\R^n)$,
the tools developed in \cite{Brezis1995,Brezis1996} do provide a pointwise characterization
of the local topological degree 
$\deg(U,\B;\cdot)$ 
$\L^n$-a.e.\ in $\R^n$.
Specifically, we show in Appendix~\ref{a:deg},
following ideas from \cite{JerrardSoner2002,CanevariOrlandi2019},
that for $\L^n$-a.e. $y\in\R^n$,
\begin{equation*}
u_y:=\frac{u-y}{|u-y|}\in W^{1,n-1}(\S;\S)
\ \
\text{and } \ \
\deg(U,\B;y)=\deg(u_y,\S;\S)\,,
\end{equation*}
where  $\deg(u_y ,\S;\S)$ is the $\mathrm{VMO}$-degree defined in \cite{Brezis1995}.
This generalizes \cite[Section II.4]{Brezis1996} where
the identity  $\deg(U,\B;y)=\deg(u_y,\S;\S)
$ is proved for all $y\notin\overline{U(\S)}$.
\end{remark}

\begin{remark}[Sphere-valued maps]\label{rmk:vol=deg}
The discussion of the previous remark, combined with \eqref{eq:bulkbdry}, also shows that, as claimed in the introduction,  if $u\in W^{1,n-1}(\S;\S)$ then \[\mc V_n(u) = \deg(u):=\deg(u,\S;\S)\] 
is the usual $\tp{VMO}$-degree.
\end{remark}

We conclude this subsection by recalling some useful facts from \cite[Section 2.3]{GuerraLamyZemas2023} regarding the expansion of $\mc V_n$ around $\mathrm{id}_{\S}$, as a polynomial in the derivatives of the perturbation. For convenience of the reader, we recall the notation used therein. 
Let $A\in \R^{n\times n}$ with its set of eigenvalues 
 (be them real or complex) 
  being labeled as $\{\mu_1,\dots,\mu_n\}$.   We then have  
\begin{align*}
\det(I_n+A)=1+\sum_{k=1}^n \sigma_k(A)\,,
\end{align*}
where $\sigma_k(A)$ denotes the $k$-th elementary symmetric polynomial in the eigenvalues of $A$:
\begin{equation}\label{eq:symmetric_polies}
\sigma_k(A):=\sum_{1\leq i_1<\dots<i_k\leq n}\mu_{i_1}\dots\mu_{i_k}\,.
\end{equation}
Note that the $k$-homogeneity of $\sigma_k$ implies the Euler identity
\begin{align*}
\sigma_k(A)=\frac 1k  \sigma_k'(A):A\,,
\end{align*}
where $\sigma_k'(A)\in\R^{n\times n}$ is the gradient of $\sigma_k$ with respect to the $A$-variable.
With this notation, we have the following:

\begin{lemma}\label{lemma_on_null_Lagrangians}
Let $w\in W^{1,n-1}(\S;\R^n)$. Then
\begin{equation}\label{eq:boundary_integrals}
\mc V_n(\mathrm{id}_{\S}+w)= 1+\sum_{k=1}^n \frac{n}{k}\fint_{\S}\langle w,[\sigma'_k(\nabla_T wP_T^t)]^tx\rangle\,\mathrm{d}\H^{n-1}\,,
\end{equation}
where $\sigma_k$ are as in \eqref{eq:symmetric_polies}.
Moreover, the first and last summands have the simple forms
\begin{align}
n \fint_\S \langle w,[\sigma'_1(\nabla_T wP_T^t)]^tx\rangle\,\mathrm{d}\H^{n-1}
& = n\fint_{\mathbb S^{n-1}} \langle w, x\rangle\,\mathrm{d}\H^{n-1}\,,
\label{eq:first_term_in_the_expansion}
\\
\fint_{\S}\langle w,[\sigma'_n(\nabla_T wP_T^t)]^tx\rangle\,\mathrm{d}\H^{n-1}
&=\fint_{\S}\big\langle w,J(w)\big\rangle\,\mathrm{d}\H^{n-1}=\mc V_n(w)\,,
\label{last_term_in_the_expansion}
\end{align}
and the intermediate summands for $k\in\lbrace 2,\ldots n-1\rbrace$ are estimated by
\begin{equation}\label{eq:trivial_estimate_for_intermediate_terms}
\frac{n}{k}\bigg|\fint_{\S}\langle w,[\sigma'_{  k }(\nabla_T wP_T^t)]^tx\rangle\,\mathrm{d}\H^{n-1}\bigg|
\leq C_n
\fint_{\S}|w||\nabla_Tw|^{k-1}\,\mathrm{d}\H^{n-1}\,,
\end{equation}
where $C_{n}>0$ is a constant depending only on $n$.
\end{lemma}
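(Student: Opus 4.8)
\textbf{Proof proposal for Lemma~\ref{lemma_on_null_Lagrangians}.}
The plan is to start from the bulk formula \eqref{eq:bulkbdry}, namely $\mc V_n(u)=\fint_{\B}\det\nabla U\,\d x$ for any $W^{1,n}$-extension $U$ of $u$, and apply it with $u=\mathrm{id}_{\S}+w$. Choosing the extension $W$ of $w$ so that $U=\mathrm{id}_{\B}+W$, we have $\nabla U=I_n+\nabla W$, hence $\det\nabla U=1+\sum_{k=1}^n\sigma_k(\nabla W)$ by the eigenvalue expansion recalled before the statement. Since $\fint_{\B}1\,\d x=1$, the first step is thus to show that for each $k$,
$$
\fint_{\B}\sigma_k(\nabla W)\,\d x=\frac nk\fint_{\S}\langle w,[\sigma_k'(\nabla_T w\,P_T^t)]^t x\rangle\,\d\H^{n-1}.
$$
This is the key reduction from a bulk to a boundary integral. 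It follows from the null-Lagrangian structure of $\sigma_k$: the map $A\mapsto\sigma_k(A)$ is a divergence (indeed $\sigma_k(\nabla W)=\frac1k\,\ddiv\big(\sigma_k'(\nabla W)^t W\big)$ because $\ddiv\big(\sigma_k'(\nabla W)^t\big)=0$ in the sense of distributions — the Piola-type identity for $\sigma_k'$, generalizing \eqref{eq:Piola}, which is the case $k=n$). Applying the divergence theorem on $\B$ and using the Euler identity $\sigma_k(A)=\frac1k\sigma_k'(A):A$ together with the trace relation $\nabla W\,\tau_i=\partial_{\tau_i}w=\nabla_T w\,\tau_i$ on $\S$ and $\tau_1\wedge\dots\wedge\tau_{n-1}=x$ (see \eqref{eq:nu}) to identify the normal-component data as $\sigma_k'(\nabla_T w\,P_T^t)$, one obtains the displayed identity. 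A density argument reduces everything to $W\in C^\infty(\overline\B;\R^n)$, exactly as in the proof of Lemma~\ref{lem:TUBV}, since both sides depend continuously on $w\in W^{1,n-1}(\S)$ and on $W\in W^{1,n}(\B)$ via \eqref{eq:Sobolev_bound_for_U} and \eqref{eq:Hadamard}.

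With \eqref{eq:boundary_integrals} in hand, the remaining claims are essentially bookkeeping. For \eqref{eq:first_term_in_the_expansion}: $\sigma_1(A)=\mathrm{tr}\,A$, so $\sigma_1'\equiv I_n$, hence $[\sigma_1'(\nabla_T w\,P_T^t)]^t x=x$ and the $k=1$ term is $n\fint_{\S}\langle w,x\rangle\,\d\H^{n-1}$. For \eqref{last_term_in_the_expansion}: $\sigma_n(A)=\det A$ and $\sigma_n'(A)=\cof A$, so the $k=n$ term is $\fint_{\S}\langle w,(\cof\nabla_T w\,P_T^t)^t x\rangle$; using \eqref{eq:cof}–\eqref{eq:cofeqwedge} with the columns of $\nabla_T w\,P_T^t$ being $\partial_{\tau_i}w$ gives $(\cof\nabla_T w\,P_T^t)x=\bigwedge_{i=1}^{n-1}\partial_{\tau_i}w=J(w)$, so this term equals $\fint_{\S}\langle w,J(w)\rangle=\mc V_n(w)$ by \eqref{eq:vol}. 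For the intermediate bound \eqref{eq:trivial_estimate_for_intermediate_terms}: $\sigma_k'$ is a polynomial, homogeneous of degree $k-1$, in the entries of its argument, so $|\sigma_k'(\nabla_T w\,P_T^t)|\le C_n|\nabla_T w|^{k-1}$ pointwise (recall $|P_T|$ is bounded); Cauchy–Schwarz inside the integral then yields $\frac nk\big|\fint_{\S}\langle w,[\sigma_k'(\nabla_T w\,P_T^t)]^t x\rangle\big|\le C_n\fint_{\S}|w|\,|\nabla_T w|^{k-1}\,\d\H^{n-1}$.

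The main obstacle is the first step — establishing the bulk-to-boundary identity rigorously at the level of $W^{1,n}$ maps, i.e. justifying the distributional Piola identity $\ddiv\big(\sigma_k'(\nabla W)^t\big)=0$ for $1\le k\le n-1$ and the subsequent integration by parts on $\B$, since the individual minors $\sigma_k(\nabla W)$ need not be integrable when $W\in W^{1,n}$ only (they live in $L^{n/k}$, which is fine for integrability against a bounded test object but the approximation and the weak continuity of minors must be handled with care, again paralleling the argument in Lemma~\ref{lem:TUBV} and using \eqref{eq:cof}). Alternatively, and perhaps more cleanly, one can avoid extensions altogether and reference the computation already carried out in \cite[Section~2.3]{GuerraLamyZemas2023}, where $\mc V_n(\mathrm{id}_{\S}+w)$ is expanded directly as a sum of boundary null-Lagrangians; the content of this lemma is then a restatement of that expansion together with the elementary identifications of the extreme and intermediate terms described above. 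Everything else is routine.
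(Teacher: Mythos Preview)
Your proposal is correct and takes essentially the same approach as the paper: the paper's own proof simply cites \cite[Lemma~2.6 and Remark~2.7]{GuerraLamyZemas2023} together with identity \eqref{eq:bulkbdry}, which is precisely the ``cleaner'' alternative you describe at the end, and your detailed sketch (bulk expansion of $\det(I_n+\nabla W)$, null-Lagrangian/Piola structure of each $\sigma_k$, divergence theorem, then identification of the $k=1$, $k=n$, and intermediate terms) is exactly the content of that cited lemma.
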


The proof of Lemma~\ref{lemma_on_null_Lagrangians} follows directly from \cite[Lemma 2.6]{GuerraLamyZemas2023} combined with identity \eqref{eq:bulkbdry}, and \cite[Remark~2.7]{GuerraLamyZemas2023}.

\subsection{M\"obius transformations}\label{sec:Mobius maps}

As mentioned in the Introduction,  
we define $\Mob(\mb S^{n-1})$ to be the group of M\"obius transformations of $\mb S^{n-1}$, 
\begin{equation}\label{def: Mobius_group} 
\Mob(\mb S^{n-1}) := \{O \phi_{\xi,\lambda}: O \in \tp O(n),\ \xi \in \S, \ \lambda>0\}\,,
\end{equation}
where 
\[
\phi_{\xi,\lambda}:=\sigma_{\xi}^{-1}\circ i_{\lambda}\circ\sigma_{\xi}\,,
\] 
with $\sigma_{\xi}\colon \S\to \overline{\R^{n-1}} $ the  stereographic projection from $-\xi\in \S$\,,  and $i_{\lambda}\colon \R^{n-1}\to \R^{n-1}$ the dilation by factor $\lambda>0$. 
Explicitly, we may write
\begin{align}\label{moebius_analytically}
\begin{split}	
\sigma_\xi(x)&=\frac{x-\langle x,\xi \rangle\xi}{1+\langle x,\xi\rangle}
\,, \\
\phi_{\xi,\lambda} (x)&= \frac{-\lambda^2 (1-\langle x,\xi\rangle )\xi + 2 \lambda(x-\langle x,\xi\rangle \xi) + (1+\langle x,\xi\rangle) \xi}{\lambda^2 (1-\langle x,\xi\rangle) + (1+ \langle x,\xi\rangle)}\
\end{split}
\end{align}
for all $x\in\S$. 
This coincides with the classical characterization of M\"obius transformations of $\overline{\R^{n-1}}$ in terms of inversions, see \cite[Section~2.1]{Reshetnyak1994} and \cite[Remark~A.1]{zemas2022rigidity} for more details.
For instance, the inversion $\psi\colon x\mapsto x/|x|^2$ in $ \overline{\R^{n-1}}$ corresponds simply to the orthogonal reflection $S_\xi \in \tp O(n)$ with respect to $\xi^\perp :=\{y\in \R^n:\langle y,\xi\rangle=0\}$, \textit{i.e.}, 
 $S_\xi \mathrm{id}_{\S}=\sigma_\xi^{-1}\circ\psi\circ\sigma_\xi$.

We also denote by $\Mob_\pm(\mb S^{n-1})$ the subfamilies of orientation preserving and orientation-reversing M\"obius transformations, corresponding to multiplication by $O\in \SO(n)$ or by $O\in \tp O(n)\setminus \SO(n)$ respectively.

We note that $\Mob(\mb S^{n-1})$ is a \textit{Lie group} of dimension $n(n+1)/2$. Using \eqref{moebius_analytically}, it is an elementary calculation to show that the corresponding \textit{Lie algebra} $T_{\mathrm{id}_{\S}}(\Mob(\mb S^{n-1}))$ can be identified with 
\begin{align}\label{Lie_algebra}
\mob(\S):=\left\{X_{S,\xi,\mu}:  S+S^t=0,\ \xi\in\S,\ \mu\in \R\right\}\,,
\end{align}
where $X_{S,\xi,\mu}\colon  \S
\to \R^n$ is defined by $X_{S,\xi,\mu}(x) := Sx+\mu\big(\langle x,\xi\rangle x-\xi\big)$.

\subsection{The first and second variations of the deficit}\label{subsec: 1st_2nd_variation}

In this subsection, we will consider without restriction maps $u\in W^{1,n-1}(\S;\R^n)$ for which  $\mc V_n(u)>0$, 
without mentioning it further in the sequel. In the case $\mc V_n(u)<0$, all the subsequent results can be retrieved by composing with the \textit{flip} in $\R^n$, \textit{i.e.}, the map $(x_1,\dots,x_{n-1},x_n)\mapsto(x_1,\dots,x_{n-1},-x_n)$. 

For the next result we use the notation
\begin{equation*}
\Delta_{n-1}(u):= \mathrm{div}_{\S}(|\nabla_T u|^{n-3}\nabla_Tu)
\end{equation*}
for the $(n-1)$-Laplace operator on $\S$.
 Recalling \eqref{eq:deficit},  
  we have:
\begin{lemma}\label{lemma: first_variation}
A critical point $u\in W^{1,n-1}(\S;\R^n)$ for $\mc E_{n-1}$ with $\mc V_n(u)>0$, satisfies the system
\begin{equation}\label{eq:EL_H_system}
\Delta_{n-1}(u)+H_uJ(u)=0\,,\ \ \text{where } H_u:=(n-1)^{\frac{n-1}{2}}\frac{\mc D_{n-1}(u)}{\mc V_n(u)}\,,
\end{equation}
in the sense of distributions.
\end{lemma}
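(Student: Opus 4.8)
The strategy is to compute the first variation of the deficit $\mc E_{n-1}$ along smooth compactly supported perturbations and to translate the vanishing of this first variation into the Euler--Lagrange system \eqref{eq:EL_H_system}. Since $\mc E_{n-1}(u)=[\mc D_{n-1}(u)]^{n/(n-1)}/\mc V_n(u)-1$ when $\mc V_n(u)>0$, the first step is to record the first variations of the two building blocks. For $\mc D_{n-1}$, a direct differentiation under the integral sign in \eqref{eq:Dirichlet} gives
\begin{equation*}
\frac{\d}{\d t}\Big|_{t=0}\mc D_{n-1}(u+t\varphi)
=\fint_{\S}(n-1)^{-\frac{n-1}{2}}|\nabla_T u|^{n-3}\langle \nabla_T u,\nabla_T\varphi\rangle\,\d\haus\,,
\end{equation*}
which upon (distributional) integration by parts on $\S$ produces the term $-(n-1)^{-(n-1)/2}\langle\Delta_{n-1}(u),\varphi\rangle$. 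For $\mc V_n$, I would use the expansion recorded in Lemma~\ref{lemma_on_null_Lagrangians}, or equivalently the bulk representation \eqref{eq:bulkbdry} together with the fact that $U\mapsto\int_{\B}\det\nabla U$ is a null Lagrangian: its first variation is $\fint_{\S}\langle\varphi,J(u)\rangle\,\d\haus$, so that $\mc V_n'(u)[\varphi]=\fint_{\S}\langle\varphi,J(u)\rangle\,\d\haus$.

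Combining these via the quotient rule, the condition that $u$ is a critical point of $\mc E_{n-1}$ reads, for all $\varphi\in C^\infty_c(\S;\R^n)$ (or a dense enough class of admissible perturbations),
\begin{equation*}
\frac{n}{n-1}[\mc D_{n-1}(u)]^{\frac{1}{n-1}}\,\mc D_{n-1}'(u)[\varphi]\,\mc V_n(u)
-[\mc D_{n-1}(u)]^{\frac{n}{n-1}}\,\fint_{\S}\langle\varphi,J(u)\rangle\,\d\haus=0\,.
\end{equation*}
Dividing through by the nonzero constant $\tfrac{n}{n-1}[\mc D_{n-1}(u)]^{1/(n-1)}\mc V_n(u)$ and inserting the formula for $\mc D_{n-1}'(u)[\varphi]$ above, the bracket of test functions becomes
\begin{equation*}
\fint_{\S}\Big\langle (n-1)^{-\frac{n-1}{2}}|\nabla_T u|^{n-3}\nabla_T u\,\colon\,\nabla_T\varphi
\;-\;(n-1)^{\frac{n-1}{2}}\frac{\mc D_{n-1}(u)}{\mc V_n(u)}\,\langle\varphi,J(u)\rangle\Big\rangle\,\d\haus=0\,.
\end{equation*}
Integrating by parts in the first term and recalling the definition $H_u:=(n-1)^{(n-1)/2}\mc D_{n-1}(u)/\mc V_n(u)$, the arbitrariness of $\varphi$ yields exactly
\begin{equation*}
(n-1)^{-\frac{n-1}{2}}\big(\Delta_{n-1}(u)+H_u\,J(u)\big)=0
\end{equation*}
in $\mathcal D'(\S)$, which after multiplication by $(n-1)^{(n-1)/2}$ is \eqref{eq:EL_H_system}.

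\textbf{Main obstacle.} The one genuinely delicate point is the differentiability of $\mc V_n$ along $W^{1,n-1}$-perturbations and the justification of the formula $\mc V_n'(u)[\varphi]=\fint_{\S}\langle\varphi,J(u)\rangle\,\d\haus$: since $\mc V_n$ is defined on the critical space only through the extension/distributional procedure of Section~\ref{sec:degree}, one must check that $t\mapsto\mc V_n(u+t\varphi)$ is differentiable at $t=0$ with the claimed derivative. This follows from Lemma~\ref{lemma_on_null_Lagrangians} applied to the perturbation $u+t\varphi-\mathrm{id}_\S$ combined with the multilinearity of the minors (each $\sigma'_k$ term is polynomial in $t$), together with the Hadamard bound \eqref{eq:Hadamard} and Hölder's inequality to control the remainder; alternatively one passes through smooth extensions $U+tV$ and uses the weak continuity of Jacobians. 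A secondary, routine point is that one only tests against perturbations $\varphi$ that are themselves sufficiently regular and that the distributional integration by parts on $\S$ is licit for $|\nabla_T u|^{n-3}\nabla_T u\in L^{(n-1)/(n-2)}$; this is standard once the variational formula is in place.
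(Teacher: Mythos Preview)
Your approach is essentially identical to the paper's: compute the Gateaux derivative of $\mc E_{n-1}$ via the quotient rule, differentiate $\mc D_{n-1}$ directly under the integral, and obtain $\mc V_n'$ by passing to a $W^{1,n}$-extension on $\B$ and invoking Piola's identity together with the divergence theorem (the paper does precisely this rather than routing through Lemma~\ref{lemma_on_null_Lagrangians}, which is tailored to perturbations of $\id_\S$).

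Two arithmetic slips to fix so that the constant $H_u$ comes out right. First, differentiating $\big(\frac{|\nabla_T u|^2}{n-1}\big)^{(n-1)/2}$ gives the prefactor $(n-1)^{-(n-3)/2}$, not $(n-1)^{-(n-1)/2}$. Second, $\mc V_n'(u)[\varphi]=n\fint_{\S}\langle J(u),\varphi\rangle$: the factor $n$ appears because $\fint_{\B}\cof\nabla U:\nabla\Phi=\frac{1}{\omega_n}\int_{\S}\langle J(u),\varphi\rangle=n\fint_{\S}\langle J(u),\varphi\rangle$, owing to $\haus(\S)=n\omega_n$. With these two corrections the division step yields exactly $H_u=(n-1)^{(n-1)/2}\mc D_{n-1}(u)/\mc V_n(u)$; as written, your displayed intermediate identity does not balance and the final constant only comes out by a fortuitous cancellation of errors.
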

\begin{proof}
For every $\psi\in W^{1,n-1}(\S;\R^n)$, we compute the Gateaux derivative
\begin{align*}
\mc E'_{n-1}(u)[\psi]&
:=\frac{d}{dt}\Big|_{t=0}\mc E_{n-1}(u+t\psi)\\
&
=\frac{n}{n-1}\frac{\mc D_{n-1}(u)^{\frac{1}{n-1}}}{\mc V_n(u)^2}\bigg(\mc V_n(u)\mc D_{n-1}'(u)[\psi]-\frac{n-1}{n}\mc D_{n-1}(u)\mc V_n'(u)[\psi]\bigg)\,.
\end{align*} 
We then compute each of the Gateaux derivatives separately. For the first derivative, we have
\begin{align*}
\mc D_{n-1}'(u)[\psi]=\frac{1}{(n-1)^{\frac{n-3}{2}}}\fint_{\S}|\nabla_Tu|^{n-3}\nabla_Tu:\nabla_T\psi\d \H^{n-1}\,.
\end{align*}
For the derivative of the volume, we pick extensions $U,\Psi\in W^{1,n}(\B;\R^n)$ of $u, \psi$,  and use  \textit{Piola's identity} \eqref{eq:Piola} together with the divergence theorem, to get 
\begin{align*}
\mc V_n'(u)[\psi] &
=\fint_{\B} \cof\nabla U 
:\nabla\Psi\,\mathrm{d}x\\	&=n\fint_{\S}\langle \cof(\nabla U)x,\psi \rangle\d \H^{n-1}=n\fint_{\S}\langle J(u),\psi\rangle\d \H^{n-1}\,,	
\end{align*}
where in the last equality we used also \eqref{eq:cofeqwedge}. Collecting all the above identities, we obtain 
\[\frac{\mc V_n(u)}{(n-1)^\frac{n-1}{2}}\fint_{\S}|\nabla_T u|^{n-3}\nabla_Tu\colon \nabla_T\psi\d \H^{n-1}-\mc D_{n-1}(u)\fint_{\S}\langle J(u),\psi\rangle\d \H^{n-1}=0\,,\]
which is precisely the distributional form of \eqref{eq:EL_H_system}.
\end{proof}

Solutions to \eqref{eq:EL_H_system} enjoy the following regularity properties, shown by Mou and Yang in \cite[Theorem 3.6]{Mou1996}:

\begin{theorem}\label{thm:H-system}
Let $u\in W^{1,n-1}(\mb S^{n-1};\R^n)$ be a solution to \eqref{eq:EL_H_system}. Then $u\in C^{1,\alpha}(\mb S^{n-1};\R^n)$ for some $\alpha \in (0,1)$.
\end{theorem}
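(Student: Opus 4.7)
\smallskip
\noindent\emph{Proof proposal.}
The statement is a local regularity result for critical $p$-Laplace type systems, and its proof follows the now-standard scheme for conformally invariant PDE. My plan is to work on a small coordinate chart obtained via stereographic projection, where the equation reads
\[
\Delta_{n-1} u + \widetilde H_u J(u) = 0 \quad \text{in } B_1 \subset \R^{n-1},
\]
up to harmless lower-order terms coming from the spherical metric. Since $p=n-1$ equals the domain dimension, the equation is \emph{critical} and conformally invariant, and one cannot extract Hölder continuity from the integrability of $\n u$ alone; an additional structural ingredient is needed.

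That ingredient is the compensation-compactness result of Coifman--Lions--Meyer--Semmes, which ensures that the Jacobian nonlinearity
\[
J(u) = \p_{x_1} u \wedge \cdots \wedge \p_{x_{n-1}} u
\]
lies in the local Hardy space $\mathscr H^1$, with the quantitative bound
\[
\|J(u)\|_{\mathscr H^1(B_r)} \lesssim \|\n u\|_{L^{n-1}(B_{2r})}^{n-1}.
\]
The next step is to derive a \emph{small-energy decay estimate}: there exist $\varepsilon_0>0$ and $\theta,\alpha\in(0,1)$ such that whenever $\int_{B_r(x_0)}|\n u|^{n-1}\leq \varepsilon_0$, one has
\[
\int_{B_{\theta r}(x_0)}|\n u|^{n-1} \leq \theta^{(n-1)\alpha}\int_{B_r(x_0)}|\n u|^{n-1}.
\]
The proof of this inequality tests the equation against a suitable cutoff of $u-(u)_{B_r}$ and uses the Fefferman--Stein duality between $\mathscr H^1$ and BMO to control the critical term $\int J(u)\cdot(u-(u)_{B_r})$ by $\|\n u\|_{L^{n-1}}^{n-1}\cdot[u]_{\mathrm{BMO}}$, which in turn is controlled by the Poincaré--Sobolev inequality. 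The smallness threshold is then used to absorb the nonlinear contribution into the leading $p$-Dirichlet term.

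Iterating the decay on dyadic balls yields the Morrey-type bound $\int_{B_r(x_0)}|\n u|^{n-1}\leq C r^{(n-1)\alpha}$, whence $u\in C^{0,\alpha}_{\mathrm{loc}}$ by the Morrey embedding; the smallness hypothesis is verified on every sufficiently small ball thanks to absolute continuity of the integral, since $\n u\in L^{n-1}$ globally. Once continuity is in hand, one upgrades to $C^{1,\alpha}$ by the classical interior regularity theory for $p$-harmonic systems (Uhlenbeck, DiBenedetto, Tolksdorf): the Hölder oscillation control on $u$ allows a freezing/comparison argument with homogeneous $(n-1)$-harmonic maps, for which $C^{1,\alpha}$ bounds are known, and the Jacobian right-hand side is absorbed by a perturbation argument using the already-established $\mathscr H^1$ bound together with the Hölder continuity of $u$. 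The delicate point, and the one I expect to be the main obstacle, is the small-energy decay: this is precisely the step where the borderline conformal invariance is resolved by the compensated structure of $J(u)$, and it is what makes the Hardy--BMO machinery indispensable.
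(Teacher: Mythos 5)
The paper does not prove this statement itself: it is quoted verbatim from Mou--Yang \cite[Theorem 3.6]{Mou1996}, and your outline (localization by stereographic projection using conformal invariance, the Coifman--Lions--Meyer--Semmes $\mathscr H^1$ bound on $J(u)$, a small-energy decay estimate via $\mathscr H^1$--BMO duality and absorption, Morrey iteration to $C^{0,\alpha}$, then the $p$-harmonic perturbation/bootstrap to $C^{1,\alpha}$) is essentially the strategy of that reference. So your proposal is correct and takes the same route as the proof the paper relies on; the only caveat is that the two hardest steps you flag -- the decay estimate and the $C^{1,\alpha}$ upgrade -- are sketched rather than carried out, exactly as the paper leaves them to the citation.
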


\begin{remark}
\normalfont 
It is expected that continuity of solutions holds provided that the constant $H_u$ in  \eqref{eq:EL_H_system} 
is replaced by a   bounded and Lipschitz 
function $H\colon \mb S^{n-1}\to \R$, cf.\ \cite[Problem 2.4]{Schikorra2017}.
\end{remark}

Let $\mathcal L_n :=\mathcal E_{n-1}''(\mathrm{id}_\S)$ denote the second variation of the deficit $\mathcal{E}_{n-1}$ at $\mathrm{id}_\S$, and let $Q_n$ the associated quadratic form
on $W^{1,2}(\mathbb S^{n-1};\R^n)$, which is given by 
\begin{align}\label{eq:Qn} \begin{split}
\hspace{-0.5em}Q_n(w)
&:=\frac 12 \frac{n}{n-1}\fint_{\mathbb S^{n-1}} |\nabla_T w|^2
+\frac{n(n-3)}{2(n-1)^2}\fint_{\mathbb S^{n-1}}(\mathrm{div}_{\S}w)^2-\frac n2\fint_{\mathbb S^{n-1}} \langle w, A(w)\rangle\,,
\end{split}
\end{align}
where 
\begin{align}\label{eq:opearator_A}
A(w):= (\mathrm{div}_{\S}w)x-\sum_{j=1}^{n}x_j\nabla_T w^j\,,
\end{align}
cf.\ \cite[(1.12) and Subsection 5.1]{zemas2022rigidity}.
In particular, for every $u\in W^{1,n-1}(\S;\R^n)$ with 
\[\fint_{\S}u=0\,,\qquad \fint_{\S}\langle u,x\rangle=1\,,\]
conditions that for our purposes can be ensured by a translation and a rescaling, if one writes $u:=\mathrm{id}_{\S}+w$, then a formal Taylor expansion gives (cf.\  \cite[Appendix B]{zemas2022rigidity} 
 and the discussion in Subsection~\ref{quantitative_description})
\[
\mathcal{E}_{n-1}(u)=Q_n(w)+\mathcal{O}\Big(\fint_{\S}|\nabla_T w|^3\Big)\,.
\]

The following coercivity result is proven in \cite[Theorem 1.5]{zemas2022rigidity}.

\begin{theorem}[Linear stability]\label{thm:linstab}
There is a constant $C_n>0$ such that  for all $w\in H_n$,
where
\begin{equation}\label{eq:H_n}
H_n:=\bigg\{w\in W^{1,2}(\S;\R^n)\colon \fint_{\S}w=0\,,\quad \fint_{\S}\langle w,x\rangle=0\bigg\}\,,\\
\end{equation}
the following estimate holds:
\begin{align}\label{eq:lin_stab}
Q_n(w) \geq C_n \fint_{\mathbb S^{n-1}}\big|\nabla_T w-\nabla_T(\Pi_{n,0}w)\big|^2\d \H^{n-1}\,,
\end{align}
where
$\Pi_{n,0}\colon H_n\to H_{n,0}$ is the $W^{1,2}$-orthogonal projection
onto the kernel $H_{n,0}$ of $Q_n$ in $H_n$, which is isomorphic to $\mob(\S)$ defined in \eqref{Lie_algebra}.
\end{theorem}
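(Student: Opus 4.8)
\textbf{Proof strategy for Theorem~\ref{thm:linstab}.} The plan is to diagonalize the quadratic form $Q_n$ using the Fourier decomposition of $w \in H_n$ into $\R^n$-valued spherical harmonics. First I would write $w = \sum_{k\geq 1} w_k$, where $w_k = (w_k^1,\ldots,w_k^n)$ and each component $w_k^j$ is a spherical harmonic of degree $k$ (the degree-$0$ part and the $\langle\cdot,x\rangle$-component of the degree-$1$ part are excluded by the constraints defining $H_n$). Since $w_k^j$ is an eigenfunction of $-\Delta_{\S}$ with eigenvalue $k(k+n-2)$, the first two terms of $Q_n$ in \eqref{eq:Qn} are immediately diagonal in $k$: $\fint |\nabla_T w|^2 = \sum_k k(k+n-2)\fint|w_k|^2$, and similarly $\mathrm{div}_\S w = \sum_k \mathrm{div}_\S w_k$ with each $\mathrm{div}_\S w_k$ again a degree-$k$ harmonic (using $\mathrm{div}_\S(x_j \nabla_T f) $-type identities, or more simply that $\mathrm{div}_\S w = \mathrm{div}_\S w - $ trace terms). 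The key computational point is that the bilinear form $w \mapsto \fint \langle w, A(w)\rangle$ with $A$ as in \eqref{eq:opearator_A} also respects this decomposition: $A(w)$ mixes $w^j$ with the coordinate functions $x_j$, which are degree-$1$ harmonics, so $A$ maps degree-$k$ fields into a combination of degree-$(k-1)$ and degree-$(k+1)$ fields; however, the quadratic form $\fint\langle w, A(w)\rangle$ only pairs a degree-$k$ piece of $A(w)$ against the degree-$k$ piece of $w$, and a parity/orthogonality argument shows the cross terms in $k$ vanish, so $Q_n(w) = \sum_{k\geq 1} Q_n^{(k)}(w_k)$.

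Once the form is block-diagonalized, I would analyze each block $Q_n^{(k)}$ on the finite-dimensional space of $\R^n$-valued degree-$k$ harmonics. Within a fixed degree $k$, one further decomposes $w_k$ according to the decomposition of the $\mathrm{O}(n)$-representation $\H_k \otimes \R^n$ into irreducibles — concretely, one splits $w_k$ into a "gradient/radial" part, a "divergence-free tangential" part, and the remaining piece, following the scheme in \cite[Subsection 5.1]{zemas2022rigidity}. On each irreducible component $A$ acts as a scalar, so $Q_n^{(k)}$ becomes an explicit scalar expression in $k$ and $n$, and the task reduces to checking the elementary inequality that this scalar is $\geq 0$, with equality precisely in degree $k=1$ on the component corresponding to $\mob(\S)$. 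The kernel $H_{n,0}$ is then read off as exactly this degree-$1$ component, and one verifies directly from \eqref{Lie_algebra} that the vector fields $X_{S,\xi,\mu}$ span it — note $Sx$ is a divergence-free degree-$1$ field and $\langle x,\xi\rangle x - \xi$ is the gradient-type degree-$1$ field, matching the two surviving irreducible pieces at $k=1$.

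To upgrade from $Q_n^{(k)} \geq 0$ to the quantitative bound \eqref{eq:lin_stab}, I would exploit the spectral gap: for every block other than the kernel block, the scalar coefficient of $Q_n^{(k)}$ on each component is bounded below by $c_n$ times the corresponding Dirichlet eigenvalue $k(k+n-2)$, uniformly in $k \geq 1$ (the ratio is a rational function of $k$ that is bounded away from zero and tends to a positive limit as $k\to\infty$, so only finitely many values need to be checked by hand). Summing these lower bounds over all non-kernel components and using that $\fint|\nabla_T(w - \Pi_{n,0}w)|^2 = \sum k(k+n-2)\fint|(w-\Pi_{n,0}w)_k|^2$ gives \eqref{eq:lin_stab} with an explicit $C_n$.

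\textbf{Main obstacle.} The delicate step is the precise representation-theoretic decomposition of $\H_k\otimes\R^n$ and the verification that $A$ acts diagonally with the claimed scalars on each irreducible summand; this requires carefully setting up the right basis adapted to the splitting into gradient, divergence-free, and trace-free parts, and tracking how $\mathrm{div}_\S$ and the operator $w \mapsto \sum_j x_j \nabla_T w^j$ interact with it. Establishing that the cross terms between different degrees $k$ genuinely vanish — rather than merely being controllable — is what makes the clean block-diagonal structure work, and is the technical heart of the argument; this is carried out in detail in \cite{zemas2022rigidity}, so here I would simply invoke it.
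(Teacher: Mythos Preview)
The paper does not prove this theorem; it is quoted verbatim from \cite[Theorem~1.5]{zemas2022rigidity}, as stated just before the theorem and discussed in the Remark immediately after it. Since your closing sentence is precisely to invoke that reference, your proposal is in line with what the paper actually does.

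That said, your sketch of how the proof in \cite{zemas2022rigidity} goes is accurate only for $n=3$ and oversimplifies the higher-dimensional case. The Remark following the theorem is explicit on this point: for $n=3$ the middle $(\mathrm{div}_{\S} w)^2$-term in \eqref{eq:Qn} is absent and the quadratic form $Q_3$ and the operator $A$ \emph{commute}, which is exactly what yields the clean explicit diagonalization you describe. For $n\geq 4$ this commutation \emph{fails}, and there is no explicit diagonalization of $Q_n$ --- the optimal constant in \eqref{eq:lin_stab} is not known. So your assertion that ``on each irreducible component $A$ acts as a scalar, so $Q_n^{(k)}$ becomes an explicit scalar expression in $k$ and $n$, and the task reduces to checking the elementary inequality that this scalar is $\geq 0$'' is too optimistic: the block structure on each degree~$k$ is not scalar, and the coercivity argument in \cite[Subsection~5.1]{zemas2022rigidity} is substantially more involved than a single rational-function-in-$k$ check. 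Your identification of the ``main obstacle'' is therefore correct in spirit but understates its difficulty for $n\geq 4$.
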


\begin{remark}
\normalfont
We note that the proof of Theorem \ref{thm:linstab} given in \cite[Subsection 5.1]{zemas2022rigidity} is substantially more involved for $n>3$ than for $n=3$: indeed, in the latter case the middle term in \eqref{eq:Qn} is absent, 
and the quadratic form $Q_3$ and the operator $A$ commute, leading to an explicit diagonalization of $Q_3$. In higher dimensions this is no longer the case, and the optimal constant in \eqref{eq:lin_stab} is not known, see  \cite[Remark 5.1]{zemas2022rigidity} for further discussion.
%
\end{remark}

\section{The parametric conformal isoperimetric inequality}\label{sec:param_conformal}

The purpose of this section is to prove Proposition \ref{prop:isopintro}, which we restate here for the reader's convenience.

\begin{proposition}\label{prop:isop}
For all maps $u\in W^{1,n-1}(\mb S^{n-1};\R^n)$ we have 
$$\mc E_{n-1}(u)\geq 0\,,$$ with equality if and only if $(u-y_0)/|\mc V_n(u)|^{1/n}\in \Mob(\mb S^{n-1})$ for some $y_0\in \R^n$.
\end{proposition}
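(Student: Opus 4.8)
The plan is to prove the inequality via a reduction to the classical Euclidean isoperimetric inequality, exploiting the identity $\mc V_n(u) = \fint_{\B}\det\nabla U\,\d x = \fint_{\R^n}\deg(U,\B;y)\,\d y$ from \eqref{eq:bulkbdry} and Theorem~\ref{thm:integral_degree_formula}. The key observation is that the combined deficit $\mc E_{n-1}$ decouples naturally into a \emph{pointwise conformality defect} and a \emph{global isoperimetric defect}. First I would estimate $|\mc V_n(u)|$ from above: using the degree formula and $\|\deg(U,\B;\cdot)\|_{L^1} $ being controlled by a perimeter, one gets $|\mc V_n(u)| \leq \fint_{\R^n}|\deg(U,\B;y)|\,\d y$, and then the coarea/isoperimetric inequality bounds this by $\omega_n^{-1/(n-1)}$ times a suitable power of the "surface area" $\fint_{\S}|J(u)|\,\d\haus$ — this is the parametric isoperimetric inequality, and it saturates precisely when the image is a round ball covered with degree $\pm1$. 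Concretely I expect the chain
\begin{align*}
|\mc V_n(u)|
\;\leq\;
\fint_{\S}|J(u)|\,\d\haus
\;\leq\;
\fint_{\S}\Big(\tfrac{|\nabla_T u|^2}{n-1}\Big)^{\frac{n-1}{2}}\d\haus
=\mc D_{n-1}(u)\,,
\end{align*}
where the first step is the isoperimetric inequality in the degenerate case $n$-volume $=1$ (more precisely one rescales so that $\mc D_{n-1}(u)=1$ and uses the normalization constant $\omega_n$), and the second step is the Hadamard-type bound \eqref{eq:Hadamard}. Chasing the normalization constants carefully turns this into exactly $[\mc D_{n-1}(u)]^{n/(n-1)} \geq |\mc V_n(u)|$, i.e. $\mc E_{n-1}(u)\geq 0$.

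For the equality case I would analyze when each of the two inequalities above is an equality. Equality in \eqref{eq:Hadamard} forces $u$ to be weakly conformal, i.e. $(\nabla_T u)^t\nabla_T u = \frac{|\nabla_T u|^2}{n-1}I_x$ a.e., and moreover $|\nabla_T u|$ must be (a.e.) constant after the rescaling — actually one gets that $\mc D_{n-1}(u)$-density is constant, so combined with conformality this says $u$ is a conformal map with prescribed constant conformal factor. Equality in the isoperimetric step forces the image current to be a round ball (up to translation $y_0$) swept out with degree exactly $\pm 1$. Then I would invoke Liouville's Theorem: a weakly conformal $W^{1,n-1}$ map whose image is (a.e.) a round sphere, after composing with the appropriate dilation and translation, is a sphere-valued conformal map of degree $\pm1$, hence lies in $\Mob(\S)$ — for $n\geq 4$ directly by part~\eqref{it:n>3}, and for $n=3$ by part~(i) together with the degree-$\pm1$ hypothesis. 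Conversely, if $(u-y_0)/|\mc V_n(u)|^{1/n}\in\Mob(\S)$ then both $\mc D_{n-1}$ and $\mc V_n$ are computed explicitly (Möbius maps are conformal with $J=$ pointwise conformal factor and sweep the ball once), giving $\mc E_{n-1}(u)=0$.

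The main obstacle I anticipate is making the "parametric isoperimetric inequality" $|\mc V_n(u)|\leq c_n\big(\fint_{\S}|J(u)|\big)^{n/(n-1)}$ fully rigorous at the level of $W^{1,n-1}$-regularity, since $u(\S)$ need not be a nice set and $\mc V_n(u)$ is defined via the distributional degree $T_U$ rather than a classical enclosed volume. The clean way is to work with the BV function $y\mapsto \deg(U,\B;y)$ from Lemma~\ref{lem:TUBV}: one has $\mc V_n(u)=\int_{\R^n}\deg(U,\B;\cdot)$, $|\mc V_n(u)|\leq \|\deg(U,\B;\cdot)\|_{L^1(\R^n)}$, and by the coarea formula for BV functions together with the Euclidean isoperimetric inequality applied to superlevel sets, $\|\deg(U,\B;\cdot)\|_{L^1}\leq \omega_n^{-1/(n-1)}\big(\tfrac1n|D\,\deg(U,\B;\cdot)|(\R^n)\big)^{n/(n-1)}$ — and the total variation of the degree is in turn bounded by $\int_{\S}|J(u)|\,\d\haus$ exactly as in the computation of $|DT_{U_j}|(\R^n)$ inside the proof of Lemma~\ref{lem:TUBV}. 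Tracking the sharp constants through this chain (and identifying the equality cases of the BV isoperimetric inequality, which forces the degree function to be $\pm\mathbf 1_{B}$ for a ball $B$) is the technical heart; the conformal-geometry input (Hadamard plus Liouville) is comparatively routine once that is in place. A density argument reducing to $U\in C^\infty(\overline\B)$ where the classical area formula \eqref{eq:deg=jac} and Sard's theorem apply cleanly may be convenient, passing to the limit using the $W^{1,n}$-continuity of the extension and weak-$*$ BV convergence of the degrees.
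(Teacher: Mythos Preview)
Your strategy for the inequality is exactly the paper's: bound the total variation of the $\mathrm{BV}$ degree by $\int_\S |J(u)|$, apply the sharp Sobolev/isoperimetric inequality for $\mathrm{BV}$ functions, use that the degree is integer-valued to pass from $L^{n/(n-1)}$ to $L^1$, and finish with Hadamard's inequality \eqref{eq:Hadamard}. Your ``main obstacle'' paragraph describes this correctly, and the constants do line up.

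The equality analysis, however, has a genuine gap. From equality in the isoperimetric step you correctly extract $\deg(U,\B;\cdot)=\pm\mathbf 1_B$ for some ball $B$, but this does \emph{not} by itself say that $u(x)\in\partial B$ a.e.\ --- the degree being an indicator is a statement about the target, and $u(\S)$ could a priori contain folds lying off $\partial B$ that cancel in the degree. The paper obtains $u(x)\in\partial B$ for $\haus$-a.e.\ $x\in\{J(u)\neq 0\}$ by a separate argument: one tests the identity $\int_\S\langle V\circ u,J(u)\rangle=\int_{\R^n}(\ddiv V)\deg(U,\B;\cdot)$ against a vector field $V_0$ with $\|V_0\|_\infty\le 1$, $|V_0|<1$ off $\partial B$, and $V_0|_{\partial B}=\mp\nu_{\partial B}$, and reads off from the resulting chain of equalities that $|V_0(u(x))|=1$ wherever $J(u)\neq 0$.

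Even granting that, you still only know $u\in\partial B$ on $\{J(u)\neq 0\}$; on the complement Hadamard equality gives $\nabla_T u=0$, but nothing forces $u$ to sit on $\partial B$ there. To close this you need \emph{continuity} of $u$: then the set $\Omega=\{u\notin\partial B\}$ is open, $u$ is locally constant on it, and $u(\partial\Omega)\subset\partial B$ forces $\Omega=\emptyset$. The paper gets this continuity by observing that any $u$ with $\mc E_{n-1}(u)=0$ is a critical point, hence solves the $H$-system \eqref{eq:EL_H_system}, and then invoking the $C^{1,\alpha}$ regularity of Mou--Yang (Theorem~\ref{thm:H-system}). This regularity input is the missing ingredient in your outline; without it Liouville's Theorem cannot be applied, since you have not shown $u\in W^{1,n-1}(\S;\S)$.
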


As mentioned in the Introduction,  Proposition \ref{prop:isop} is based on an optimal parametric form of the  isoperimetric inequality. 
The non-parametric form of the isoperimetric inequality is well-known, see \textit{e.g.}\ \cite[(2.10)]{Osserman1978} and the references therein. The parametric isoperimetric inequality is an immediate consequence of the corresponding inequalities for currents, see \textit{e.g.} \cite[Corollary 6.5]{Federer1960} or \cite{Almgren1986}. Here we give a simple proof of this result, starting from the usual isoperimetric inequality and following the approach of \cite[Lemma 1.3]{Muller1990}; we begin by proving a related statement in Euclidean space, for which we recall \eqref{eq:cofeqwedge} and the definition of local degree in \eqref{eq:general_degreee}.

\begin{proposition}\label{prop:isoperimetricinRn} For all $U\in W^{1,n}(\mb B^n;\R^n)$ with $u:=U|_{\S} \in W^{1,n-1}(\S;\R^n)$ we have
\begin{equation}
\label{eq:isopRn}
n\,\omega_n^{1/n}   \left(\|\deg(U,\mb B^n;\cdot)\|_{L^1(\R^n)}\right)^{\frac{n-1}{n}} \leq \int_{\S} |J(u)|\, \mathrm{d}\haus\,.
\end{equation}
If equality holds, then either $\haus\big(\{J(u)\neq 0\}\big)=0$, or there is a ball $B\subset \R^n$ such that
\begin{gather}\label{eq:equalitydeg}
\deg(U,\mb B^n ;y)  =  \pm \mathbf 1_{B}(y) \quad \text{for } \L^n\text{-a.e.\ }y\in \R^n\,,\\[2pt]
\label{eq:equalitycof}
u(x)\in \partial B \quad \text{for } \haus\text{-a.e.\ } x  \in   \{J(u)\neq 0\}\,.
\end{gather}
\end{proposition}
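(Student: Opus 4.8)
The plan is to reduce the parametric inequality \eqref{eq:isopRn} to the classical isoperimetric inequality in $\R^n$, applied to the set function $y\mapsto\deg(U,\B;y)$. First I would invoke Lemma~\ref{lem:TUBV} and Theorem~\ref{thm:integral_degree_formula}: the local degree $d(y):=\deg(U,\B;y)$ belongs to $\mathrm{BV}(\R^n;\Z)$, and for every $V\in C^1_c(\R^n;\R^n)$ the change-of-variables identity \eqref{eq:deg=jac} (applied componentwise to $\partial_i V_i$ and summed) combined with the boundary formula \eqref{eq:bulk_surface} gives
\begin{align*}
\int_{\R^n}(\ddiv V)(y)\,d(y)\,\mathrm{d}y=\int_{\S}\langle V(u(x)),J(u)\rangle\,\mathrm{d}\haus\,.
\end{align*}
Taking the supremum over $V$ with $\|V\|_{L^\infty}\le 1$ yields $|Dd|(\R^n)\le\int_{\S}|J(u)|\,\mathrm{d}\haus$. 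Now $d\in\mathrm{BV}(\R^n;\Z)$ with compact support means, by the coarea formula for $\mathrm{BV}$ functions, that $d=\sum_{k}c_k\,\mathbf 1_{E_k}$ in a suitable sense, or more directly one applies the isoperimetric inequality for $\mathrm{BV}$ functions: $\|d\|_{L^{n/(n-1)}(\R^n)}\le c_n\,|Dd|(\R^n)$ with the sharp constant, which for the exponent pairing at hand reads $n\,\omega_n^{1/n}\|d\|_{L^1}^{(n-1)/n}\le|Dd|(\R^n)$ whenever $d$ is integer-valued (this is where integrality is essential — for a generic $\mathrm{BV}$ function one only controls the $L^{n/(n-1)}$ norm, but for $\Z$-valued $d$ one has $|d|\le|d|^{n/(n-1)}$ pointwise, hence $\|d\|_{L^1}\le\|d\|_{L^{n/(n-1)}}^{n/(n-1)}$). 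Chaining the two inequalities gives \eqref{eq:isopRn}.

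For the equality case, suppose equality holds in \eqref{eq:isopRn} and $\haus(\{J(u)\ne 0\})>0$. Then both intermediate inequalities must be equalities. Equality in $\|d\|_{L^1}\le\|d\|_{L^{n/(n-1)}}^{n/(n-1)}$ forces $|d|\in\{0,1\}$ a.e., so $d=\pm\mathbf 1_F$ for some set $F$ of finite perimeter and finite measure (after checking, via connectedness of the support of a nonconstant such $d$ together with the BV structure, that a single sign can be chosen; alternatively the sharp isoperimetric equality case already forbids mixed signs). Equality in the sharp isoperimetric inequality for sets then forces $F$ to be a ball $B$ (up to a Lebesgue-null modification), giving \eqref{eq:equalitydeg}. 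Finally, equality in $|Dd|(\R^n)\le\int_{\S}|J(u)|\,\mathrm{d}\haus$ must be examined: from the identity above, $\int_{\S}\langle V(u(x)),J(u)\rangle\,\mathrm{d}\haus=\int_{\partial B}\langle V,\nu_B\rangle\,\mathrm{d}\haus$ for all test fields $V$; choosing $V$ supported away from $\partial B$ shows $J(u)=0$ $\haus$-a.e.\ on $\{x:u(x)\notin\partial B\}$, which is \eqref{eq:equalitycof}. I would phrase this last step through the fact that the vector measure $u_\#(J(u)\,\haus\llcorner\S)$ equals $\nu_B\,\haus\llcorner\partial B$ while simultaneously its total variation $|J(u)|\,\haus\llcorner\S$ has total mass $\haus(\partial B)$; comparing total masses of the pushforward and its variation forces the pushforward to be "non-cancelling" and concentrated over $\partial B$.

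The main obstacle I anticipate is the equality-case bookkeeping, specifically two points: (i) justifying that a $\Z$-valued compactly supported $\mathrm{BV}$ function achieving equality in the composite inequality is exactly $\pm\mathbf 1_B$ for a ball — one must rule out configurations like $\mathbf 1_{B_1}-\mathbf 1_{B_2}$ or nested balls with degree $2$, which is handled by the strict inequality $|d|<|d|^{n/(n-1)}$ wherever $|d|\ge 2$ together with the rigidity in the set-isoperimetric inequality; and (ii) transferring equality information back through the pushforward identity to obtain \eqref{eq:equalitycof}, which requires being slightly careful about how $u$ maps $\{J(u)\ne0\}$ into $\R^n$ and an approximation argument (the identity \eqref{eq:deg=jac} is proven via smooth approximants $U_j$, and one needs $u_j\to u$ strongly enough to pass the geometric conclusions to the limit). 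Both are standard in spirit — this is the strategy of \cite[Lemma 1.3]{Muller1990} — so I would keep the exposition tight, citing the sharp isoperimetric inequality and its rigidity, and the $\mathrm{BV}$ coarea/structure theory, rather than reproving them.
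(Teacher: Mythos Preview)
Your approach is the paper's, down to the citation of \cite[Lemma~1.3]{Muller1990}: bound $|D\deg(U,\B;\cdot)|(\R^n)$ by $\int_{\S}|J(u)|\,\d\haus$ via the duality identity, apply the sharp BV Sobolev inequality, and use integrality of the degree to pass from the $L^{n/(n-1)}$ norm to the $L^1$ norm. On the two obstacles you flag: the paper applies the Sobolev equality case \emph{first}, which already yields $d=\alpha\mathbf 1_B$ for a ball $B$ and a single constant $\alpha$ (so the mixed-sign worry never arises), and only then uses integrality plus $|d|\le |d|^{n/(n-1)}$ to force $\alpha\in\{-1,0,1\}$; for \eqref{eq:equalitycof} it avoids the pushforward/total-variation comparison by choosing one concrete field $V_0\in C^1_c(\R^n;\R^n)$ with $V_0|_{\partial B}=-\alpha\nu_{\partial B}$ and $|V_0|<1$ off $\partial B$, so that equality in the chain forces $|V_0\circ u|=1$ $\haus$-a.e.\ on $\{J(u)\neq 0\}$.
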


\begin{proof}
The proof of \eqref{eq:isopRn} follows essentially \cite[Lemma~1.3]{Muller1990}, 
which we reproduce here to make the discussion of equality cases transparent.
For every $V\in C^1_c(\R^n;\R^n)$ with $\|V\|_{L^\infty(\R^n)}\leq 1$, using \eqref{eq:deg=jac},  \eqref{eq:bulk_surface} and \eqref{eq:cofeqwedge}, we estimate 
\begin{align*}
\int_{\R^n}(\ddiv V)(y)\mathrm{deg}(U,\B;y)\,\mathrm{d}y&=\int_{\B}(\ddiv V)(U(x))\det\nabla U(x)\,\mathrm{d}x\\
& =\int_{\S}\langle V (  U(x) ),
\cof(\nabla U)x\rangle \d   \haus (x)
\\
&\leq \|V\|_{L^\infty(\R^n)}\int_{\S}|\cof(\nabla U)x| \d \haus \leq \int_{\S}|J(u)|\,\mathrm{d}\haus\,,
\end{align*}
so that supremizing over all such $V$ and using the duality between finite Radon measures and continuous bounded functions, this estimate leads us to
\begin{equation}\label{eq:BVdeg}
\big|D[\deg(U,\mb B^n;\cdot)]\big|(\R^n) \leq \int_{\S} |J(u)|\, \mathrm{d}\haus\,.
\end{equation}
By the Sobolev inequality for $BV$-functions, see \textit{e.g.}\ \cite[(1.4)]{Fusco2007}, \eqref{eq:BVdeg} yields 
\begin{equation}\label{eq: Sobolev_with_J_u}
n\, \omega_n^{1/n} \|\deg(U,\mb B^n;\cdot)\|_{L^{\frac{n}{n-1}}(\R^n)} \leq \int_{\S} |J(u)|\,\mathrm{d}\haus\,,
\end{equation}
with equality  if and only if 
\begin{equation}
\label{eq:degball}
\deg(U,\mb B^n;y)  =   \alpha   
\mathbf{1}_{B}(y) \quad \text{for } \L^n\text{-a.e.\ }y\in \R^n\,,
\end{equation}
for some ball $B:=B_r(y_0)\subset \R^n, r>0, y_0\in \R^n$ and some $  \alpha   
\in \R$.  In fact, since the local degree is integer-valued, cf.\ Lemma \ref{lem:TUBV}, we must have $  \alpha   
 \in \Z$. Thus, $|\deg(U,\mb B^n;\cdot)| \leq |\deg(U,\mb B^n;\cdot)|^{\frac{n}{n-1}}$, and estimate \eqref{eq:isopRn} follows.

We now characterize the equality cases, so assume that \eqref{eq:isopRn} holds with equality.  Clearly we must then have $  \alpha   
\in \{-1,0,1\}$. In the case $ \alpha   
=0$, by the equality cases in the above inequalities we get 
$$\int_{\S} |J(u)|\,\mathrm{d}\haus=n\, \omega_n^{1/n} \|\deg(U,\mb B^n;\cdot)\|_{L^{\frac{n}{n-1}}(\R^n)}=0\,,$$
and so $J(u)=0$ $\haus$-a.e.\ on $\S$.
Otherwise, we see from \eqref{eq:degball} that the local degree of $U$ is (up to a sign) the indicator function of the ball $B$, thus \eqref{eq:equalitydeg} holds. We must also have equality in \eqref{eq:BVdeg} and  \eqref{eq: Sobolev_with_J_u}, and thus by \eqref{eq:degball},
\begin{equation}\label{eq:equality_1}
\int_{\S}|J(u)|\,\mathrm{d}\haus=\big| D [\deg(U,\mb B^n;\cdot)]\big|(\R^n)=\haus(\partial B)=n\omega_nr^{n-1}\,.
\end{equation}
Notice however that, by \eqref{eq:degball}, and denoting by $\nu_{\partial B}$ the outward pointing unit normal to $\partial B$, 
\begin{align*}
\begin{split}
\big| D [\deg(U,\mb B^n;\cdot)]\big|(\R^n)& =\sup_{\underset{\|V\|_{L^\infty(\R^n)}\leq 1}{V\in C^1_c(\R^n;\R^n)}}\int_{\R^n}\deg(U,\B;y)(\ddiv V)(y)\,
\mathrm{d}y
\\
&=\sup_{\underset{\|V\|_{L^\infty(\R^n)}\leq 1}{V\in C^1_c(\R^n;\R^n)}}- \alpha \int_{\partial B}\langle V,\nu_{\partial B}\rangle
\leq \haus(\partial B)= n\omega_nr^{n-1}\,,
\end{split}
\end{align*}
and by \eqref{eq:equality_1} we see that the supremum in the above estimate is achieved for any vector field
\begin{equation}\label{eq: V_0}
V_0\in C^1_c(\R^n;\R^n) \ \text{with } \|V_0\|_{L^\infty(\R^n)}\leq 1 \ \text{\ and \ } V_0|_{\partial B}=- \alpha \nu_{\partial B}\,.
\end{equation}
Using again \eqref{eq:equality_1}, \eqref{eq:deg=jac} and \eqref{eq:bulk_surface}, we deduce that for every such $V_0$,
\begin{align*}
\begin{split}
\int_{\S}|J(u)| \d \haus &= \int_{\R^n}\mathrm{deg}(U,\B;y)(\ddiv V_0)(y)\,\mathrm{d}y\\
& =\int_{\S}\langle V_0\circ u, J(u)\rangle \d \haus\leq \int_{\S} |V_0\circ u| |J(u)| \d \haus\\
&\leq \|V_0\circ u\|_{L^\infty(\S)}\int_{\S}|J(u)|\d \haus \leq \int_{\S}|J(u)|\d \haus\,,
\end{split}
\end{align*}
which further implies that
\begin{equation}\label{eq:equality_3}
|V_0(u(x))|=\sup_{\S} |V_0\circ u|=1 \quad \text{for } \haus\text{-a.e.\ } x 
\in 
 \{ J(u)\neq 0 \}\,.
\end{equation}
Picking for instance a vector field $V_0$ satisfying \eqref{eq: V_0} and moreover being such that 
\[|V_0(y)|<1 \ \ \forall y\in \R^n\setminus \partial B\,,\]
we deduce that \eqref{eq:equality_3} readily implies \eqref{eq:equalitycof}.
\end{proof}

As an immediate consequence of Proposition \ref{prop:isoperimetricinRn}, we obtain:

\begin{corollary}[Parametric isoperimetric inequality]
For all $u\in W^{1,n-1}(\S;\R^n)$ we have
\begin{equation}
\label{eq:param_isoperimetric_ineq}
\fint_\S |J(u)| \d \haus \geq |\mc V_n(u)|^{\frac{n-1}{n}}.
\end{equation}
\end{corollary}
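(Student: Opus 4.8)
The plan is to deduce the parametric isoperimetric inequality \eqref{eq:param_isoperimetric_ineq} directly from Proposition~\ref{prop:isoperimetricinRn} by estimating the $L^1$-norm of the local degree from below in terms of $\mc V_n(u)$. First I would fix an arbitrary extension $U\in W^{1,n}(\mb B^n;\R^n)$ of $u$ as in \eqref{eq:Sobolev_bound_for_U}; such an extension exists by the trace-theory discussion following \eqref{eq:cofeqwedge}. By Theorem~\ref{thm:integral_degree_formula}, identity \eqref{eq:deg=jac} holds for this $U$ with the local degree defined via \eqref{eq:general_degreee}.

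The key observation is that applying \eqref{eq:deg=jac} with the (approximating) choice $\eta\equiv 1$ — more precisely, with a sequence $\eta_k\in C^0_b(\R^n;\R)$, $0\le \eta_k\le 1$, $\eta_k\uparrow 1$ — and using \eqref{eq:bulkbdry} together with Theorem~\ref{thm:integral_degree_formula}, one gets
\begin{align*}
|\mc V_n(u)| = \left|\fint_{\mb B^n}\det\nabla U\,\mathrm{d}x\right| = \frac{1}{\omega_n}\left|\int_{\R^n}\deg(U,\mb B^n;y)\,\mathrm{d}y\right| \leq \frac{1}{\omega_n}\|\deg(U,\mb B^n;\cdot)\|_{L^1(\R^n)}\,,
\end{align*}
where the middle integral is finite by Lemma~\ref{lem:TUBV}. (One must justify passing to the limit in \eqref{eq:deg=jac} as $\eta_k\uparrow 1$, which follows from dominated convergence since $\det\nabla U\in L^1(\mb B^n)$ and $\deg(U,\mb B^n;\cdot)\in L^1(\R^n)$.) Hence $\|\deg(U,\mb B^n;\cdot)\|_{L^1(\R^n)}\ge \omega_n|\mc V_n(u)|$.

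Plugging this bound into \eqref{eq:isopRn} yields
\begin{align*}
n\,\omega_n^{1/n}\big(\omega_n|\mc V_n(u)|\big)^{\frac{n-1}{n}} \leq n\,\omega_n^{1/n}\left(\|\deg(U,\mb B^n;\cdot)\|_{L^1(\R^n)}\right)^{\frac{n-1}{n}} \leq \int_{\S}|J(u)|\,\mathrm{d}\haus\,,
\end{align*}
and since $\omega_n^{1/n}\cdot\omega_n^{(n-1)/n} = \omega_n$, the left-hand side equals $n\,\omega_n|\mc V_n(u)|^{\frac{n-1}{n}}$. Dividing through by $n\,\omega_n = \haus(\S)$ (the surface measure of the unit sphere, which is the normalization implicit in the symbol $\fint_\S$) gives exactly \eqref{eq:param_isoperimetric_ineq}.

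The only genuinely delicate point is the limiting argument $\eta_k\uparrow 1$ in \eqref{eq:deg=jac}: the identity as stated in Theorem~\ref{thm:integral_degree_formula} requires $\eta\in C^0_b(\R^n;\R)$, not merely bounded measurable, so one cannot simply take $\eta=\mathbf 1_{\R^n}$. This is resolved by choosing, say, $\eta_k(y):=\max(0,1-\tfrac{1}{k}\dist(y,\overline{B_k(0)}))$ or any similar monotone approximation, and invoking dominated convergence on both sides using the $L^1$-bounds already available. Everything else is bookkeeping with the previously established identities, so I do not expect any serious obstacle; the corollary is essentially a packaging of Proposition~\ref{prop:isoperimetricinRn} once the degree is related to the volume.
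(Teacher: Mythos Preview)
Your proof is correct and follows essentially the same route as the paper's: bound $|\mc V_n(u)|$ by $\omega_n^{-1}\|\deg(U,\B;\cdot)\|_{L^1}$ via \eqref{eq:deg=jac} and \eqref{eq:bulkbdry}, then feed this into \eqref{eq:isopRn} and normalize. The one unnecessary detour is your approximation $\eta_k\uparrow 1$: the constant function $\eta\equiv 1$ already belongs to $C^0_b(\R^n;\R)$, so Theorem~\ref{thm:integral_degree_formula} applies to it directly and no limiting argument is needed.
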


\begin{proof}
 Clearly we may assume that $\mc V_n(u)\neq 0$, as otherwise there is nothing to prove. Let $U\in W^{1,n}(\mb B^n;\R^n)$ be an extension of $u$ as in \eqref{eq:Sobolev_bound_for_U}. Combining 
 \eqref{eq:deg=jac} (for $\eta\equiv 1$)   and \eqref{eq:isopRn}, we see that
$$\omega_n^{1/n} \left|\int_{\mb B^n} \det\n U \d x\right|^{\frac{n-1}{n}} \leq \frac 1 n\int_{\S} |J(u)|   \d \haus  
\,,
$$
and the claim follows by  applying \eqref{eq:bulkbdry} and rearranging. 
\end{proof}


\begin{proof}[Proof of Proposition \ref{prop:isop}]
The non-negativity of the deficit $\mc E_{n-1}$ follows at once from \eqref{eq:param_isoperimetric_ineq} and Hadamard's inequality \eqref{eq:Hadamard}, so to complete the proof it remains to characterize the maps $u\in W^{1,n-1}(\S;\R^n)$ such that 
\begin{equation}\label{eq:zero_energy}
\mc E_{n-1}(u)=0\,.
\end{equation} Since such a map $u$ is in particular a critical point for $\mathcal{E}_{n-1}$, by Theorem \ref{thm:H-system} it is necessarily in $C^{1,\alpha}(\S;\R^n)$ for some $\alpha \in (0,1)$. Moreover, by \eqref{eq:deg=jac} (applied with $\eta\equiv 1$), \eqref{eq:bulkbdry} and \eqref{eq:zero_energy}, we have that $u$ satisfies \eqref{eq:isopRn} with equality, and hence by Proposition  \ref{prop:isoperimetricinRn} also \eqref{eq:equalitydeg} and \eqref{eq:equalitycof} hold. Note that for the ball $B:=B_r(y_0)$ therein, by \eqref{eq:equality_1}, \eqref{eq:param_isoperimetric_ineq} and \eqref{eq:zero_energy}, we have
\begin{equation}
\label{eq:radius}
r=\bigg(\fint_{\S}|J(u)|\,\mathrm{d}\haus\bigg)^{\frac{1}{n-1}}=|\mc V_n(u)|^\frac{1}{n}\,,
\end{equation}
so that by replacing $u$ with $(u-y_0)/|\mc V_n(u) |^{1/n}$, we can assume without restriction that $B=\mb B^n$.  We claim that the open set 
$$\Omega:=\{x\in \S: u(x)\not \in \S\}$$ is empty.  To see this, note that by \eqref{eq:equalitycof} we have
\begin{equation}
\label{eq:zerocof}
J(u) = 0 \ \ \text {in } \Omega\,.
\end{equation}
Since \eqref{eq:zero_energy}, \eqref{eq:param_isoperimetric_ineq} and \eqref{eq:Hadamard} imply that equality must hold in \eqref{eq:Hadamard} pointwise, \eqref{eq:zerocof} in turn implies that
$$ \n_T u  = 0  \ \ \text{ in } \Omega\,.$$ 
But this means that $u$ is constant in each connected component of $\Omega$. Since $u$ is continuous and $u(\S \setminus \Omega)\subseteq \S$, we see that $\Omega=\emptyset$, and therefore $u(\S)\subseteq \S$.

As mentioned in the previous paragraph, equality holds in \eqref{eq:Hadamard} pointwise,  which  by the equality cases in the Cauchy--Schwarz and arithmetic mean-geometric mean inequalities  means that the vectors $\{\p_{\tau_1} u,\dots, \p_{\tau_{n-1}} u\}$ are all pairwise orthogonal and have the same norm. 
Equivalently,  at every $x\in \S$ the singular values of the $(n-1)\times (n-1)$ matrix $(\n_T u(x))^t \n_T u(x)$ are all the same, which means precisely that $u$ solves \eqref{eq:weakconf}. Moreover, by \eqref{eq:equality_1} (for $r=1$ and $y_0=0$ as we have without restriction assumed) and \eqref{eq:radius}, we deduce that 
\begin{equation*}
|\mc V_n(u)|=\bigg(\fint_{\S}|J(u)|\bigg)^{\frac{n}{n-1}} =1 \,,
\end{equation*}
\textit{i.e.}, the weakly conformal map $u\in W^{1,n-1}(\S;\S)$ has  topological degree  $\pm 1$ on $\S$, cf.\ Remark \ref{rmk:vol=deg}. The conclusion now follows from Liouville's Theorem.
\end{proof}


\section{Compactness of sequences with vanishing deficit}\label{sec:compactness}

The purpose of this section is to prove Theorem \ref{thm:compactintro}. To do so, and as mentioned in the beginning of Subsection \ref{subsec: 1st_2nd_variation}, we can assume without restriction that 
\begin{equation}\label{eq:vol_positive}
\mc V_n(u_j)>0 \ \ \forall j\in \N\,.
\end{equation}
Moreover, all the subsequent statements hold up to extraction of a subsequence, which we will  neither  relabel nor mention further in the sequel. 


\subsection{Some auxiliary results of general character}

Before 
getting to
the core of the proof of Theorem \ref{thm:compactintro}, we begin by stating and proving some general results from the Calculus of Variations.

The first result we will need is essentially well-known, and its proof is a variant of that of \cite[Theorem 2.2]{Strzelecki2004}, see also \cite[Theorem~3 on page 40]{Evans1990}, \cite[Theorem on page 3]{Hardt1994a} and \cite{Courilleau2001}: it essentially says that a good control on the $p$-Laplace operator of a sequence of maps yields a subcritical form of compactness. For the statement, recall the notation
$\Delta_p u:=\dv_{ \S } (|\nabla_T u|^{p-2}\nabla_{ T} u)$.

\begin{proposition}[Subcritical compactness]\label{suboptimal compactness}
Let $p\geq 2$ and let $(v_j)_{j\in \N}\subset W^{1,p}(\S;\R^n)$ be weak solutions to 
\begin{equation}\label{eq:approximate_equation}
\Delta_{p} v_j+  f_j
=g_j\,,
\end{equation}
where
\begin{equation}\label{eq:uniform_energy_L1_estimate}
K:=\sup_{j\in \N} \|v_j\|_{W^{1,p}(\S)} +\sup_{j\in \N} \|  f_j 
\|_{L^1(\S)}<+\infty\,, 
\end{equation}
and, with $p':=\frac{p}{p-1}$, 
\begin{equation}\label{eq:strong_convergence_to_0_in_the_predual}
g_j\to 0 \ \text{strongly in } W^{-1,p'}(\S;\R^n)\,.\\[10pt]
\end{equation}
If $v_j\rightharpoonup v$ in $W^{1,p}(\S;\R^n)$ then $v_j \rightarrow v$ strongly in $W^{1,q}(\S;\R^n)$ whenever $1\leq q<p$.
\end{proposition}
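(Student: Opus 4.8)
The plan is to test the equation \eqref{eq:approximate_equation} against $v_j-v$ and extract strong convergence of the gradients in $L^q$ from a monotonicity inequality for the $p$-Laplacian. The starting point is the elementary vector inequality: there exists $c_p>0$ such that for all $a,b\in\R^{n\times(n-1)}$,
\[
\langle |a|^{p-2}a-|b|^{p-2}b,\, a-b\rangle \geq c_p\,\frac{|a-b|^2}{(|a|+|b|)^{2-p}}
\]
when $1<p<2$, and $\geq c_p|a-b|^p$ when $p\geq 2$; since here $p\geq 2$ we are in the simpler regime. Applying this with $a=\nabla_T v_j$, $b=\nabla_T v$ gives
\[
c_p\fint_\S |\nabla_T v_j-\nabla_T v|^p\,\d\haus \leq \fint_\S \langle |\nabla_T v_j|^{p-2}\nabla_T v_j-|\nabla_T v|^{p-2}\nabla_T v,\ \nabla_T v_j-\nabla_T v\rangle\,\d\haus =: I_j.
\]
The issue is that $I_j$ need not go to zero: that would require testing against $v_j-v$, but $v_j-v$ is only bounded in $W^{1,p}$, which is the critical space and not the dual pairing space for $f_j\in L^1$. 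So instead I would test against a truncation.

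\textbf{Key steps.} Fix a level $\ell>0$ and let $\eta_\ell\colon\R^n\to\R^n$ be a smooth truncation with $\eta_\ell(z)=z$ for $|z|\le \ell$, $|\eta_\ell|\le 2\ell$, $\|\nabla\eta_\ell\|_\infty\le C$. Set $\phi_j:=\eta_\ell(v_j-v)$, which is bounded in $W^{1,p}\cap L^\infty$, and in particular lies in the space dual to both $L^1(\S;\R^n)$ and $W^{-1,p'}(\S;\R^n)$ with uniformly bounded norm (norm $\lesssim \ell$). Testing \eqref{eq:approximate_equation} with $\phi_j$ yields
\[
\fint_\S \langle |\nabla_T v_j|^{p-2}\nabla_T v_j,\ \nabla_T\phi_j\rangle\,\d\haus
= \fint_\S \langle f_j-g_j,\ \phi_j\rangle\,\d\haus\,,
\]
so $|\fint_\S \langle |\nabla_T v_j|^{p-2}\nabla_T v_j, \nabla_T\phi_j\rangle| \lesssim \ell\,(K + \|g_j\|_{W^{-1,p'}})$, hence (after letting $j\to\infty$ with $\ell$ fixed, using $g_j\to0$) this quantity is bounded by $C\ell K$. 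Writing $\nabla_T\phi_j = (\nabla\eta_\ell)(v_j-v)\,(\nabla_T v_j-\nabla_T v)$ and comparing with $I_j$ restricted to the set $E_j^\ell:=\{|v_j-v|\le\ell\}$ (where $\nabla\eta_\ell=\mathrm{Id}$), I would estimate the part of $I_j$ over $\S\setminus E_j^\ell$ directly by Hölder and the uniform $W^{1,p}$ bound times $|\S\setminus E_j^\ell|^{1/p}$; since $v_j\to v$ in measure (from $v_j\to v$ in $L^p$, a consequence of Rellich applied to $W^{1,p}\hookrightarrow\hookrightarrow L^p$), $|\S\setminus E_j^\ell|\to 0$ as $j\to\infty$ for fixed $\ell$. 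The term coming from $(\nabla\eta_\ell - \mathrm{Id})$ on $\{|v_j-v|>\ell\}\cap\{\text{transition layer}\}$ is handled similarly. This shows $\limsup_{j\to\infty}\fint_\S |\nabla_T v_j - \nabla_T v|^p \mathbf 1_{E_j^\ell}\,\d\haus \lesssim \ell K$, uniformly in $\ell$. Now, the truncated quantity $\fint_\S |\nabla_T v_j-\nabla_T v|^q \mathbf 1_{E_j^\ell}$ for $q<p$ is controlled via Hölder by $(\ell K)^{q/p}$, and the complementary part $\fint_\S |\nabla_T v_j-\nabla_T v|^q \mathbf 1_{\S\setminus E_j^\ell}$ is controlled by $K^q\,|\S\setminus E_j^\ell|^{(p-q)/p}\to 0$. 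Letting $j\to\infty$ and then $\ell\to 0$ gives $\nabla_T v_j\to\nabla_T v$ in $L^q$; combined with $v_j\to v$ in $L^q$ (Rellich), this is the claimed strong $W^{1,q}$ convergence.

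\textbf{Main obstacle.} The delicate point is controlling the part of the monotonicity integral $I_j$ over the \emph{transition region} $\{\ell < |v_j-v| < 2\ell\}$ where $\nabla\eta_\ell$ differs from the identity but is nonzero: there the integrand involves $|\nabla_T v_j|^{p-2}\nabla_T v_j$ paired against $(\nabla\eta_\ell)(v_j-v)(\nabla_T v_j-\nabla_T v)$, and one cannot simply drop it. I expect to handle this by noting the transition region is contained in $\{|v_j-v|>\ell\}\subset \S\setminus E_j^\ell$, whose measure vanishes as $j\to\infty$ (for fixed $\ell$), so Hölder with exponents $p/(p-1)$ and $p$ on that set, using the uniform $W^{1,p}$ bound on $v_j,v$, gives a bound $C K^{p-1}\cdot K \cdot |\{|v_j-v|>\ell\}|^{1/p}\to 0$. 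Thus the truncation parameter $\ell$ is sent to $0$ only \emph{after} $j\to\infty$, which is exactly what makes the argument close. A secondary technical point is to confirm the pairing $\fint_\S\langle f_j,\phi_j\rangle$ makes sense: this is fine since $f_j\in L^1$ and $\phi_j\in L^\infty$ with $\|\phi_j\|_\infty\le 2\ell$, giving the clean bound $2\ell\sup_j\|f_j\|_{L^1}$.
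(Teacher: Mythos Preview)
Your overall strategy (truncate $v_j-v$, test the equation, use the $p$-Laplace monotonicity inequality, split into the sets $\{|v_j-v|\le\ell\}$ and its complement, send $j\to\infty$ then $\ell\to 0$) is exactly the one the paper follows. The gap is in your treatment of the contribution from the bad set $\{|v_j-v|>\ell\}$. Your claimed bound
\[
\int_{\{|v_j-v|>\ell\}}|\nabla_T v_j|^{p-2}\nabla_T v_j:\nabla_T\phi_j
\ \lesssim\ K^{p-1}\cdot K\cdot \big|\{|v_j-v|>\ell\}\big|^{1/p}
\]
is not a valid H\"older estimate: with exponents $p/(p-1)$ and $p$ you only get $\|\nabla_T v_j\|_{L^p}^{p-1}\,\|\nabla_T(v_j-v)\|_{L^p(\{|v_j-v|>\ell\})}$, and there is no extra factor of the measure. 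The remaining term contains $\int_{\{|v_j-v|>\ell\}}|\nabla_T v_j|^p$, which need \emph{not} vanish as $j\to\infty$ --- this is precisely the possible concentration of $|\nabla_T v_j|^p$ on shrinking sets, and ruling it out is the whole point of the proposition. So with a generic smooth truncation the argument does not close.

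The paper resolves this by choosing the specific (Lipschitz, not smooth) truncation $\eta(y)=\min\{\delta/|y|,1\}\,y$. On the bad set $\{|v_j-v|\ge\delta\}$ one then has $\eta(v_j-v)=\delta\,(v_j-v)/|v_j-v|$, and a direct computation splits the integrand there as $\mathrm I'_{j,\delta}-\mathrm I''_{j,\delta}$, where
\[
\mathrm I'_{j,\delta}=\delta\,\frac{|\nabla_T v_j|^{p-2}}{|v_j-v|}\Big(|\nabla_T v_j|^2-\frac{\big|\sum_l (v_j^l-v^l)\nabla_T v_j^l\big|^2}{|v_j-v|^2}\Big)\ \ge\ 0
\]
by Cauchy--Schwarz, and $|\mathrm I''_{j,\delta}|\lesssim |\nabla_T v_j|^{p-1}|\nabla_T v|$. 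The dangerous term $\int_{\text{bad}}|\nabla_T v_j|^p$ thus never appears: the $\mathrm I'$ part has the right sign to be \emph{dropped}, and the $\mathrm I''$ part is controlled by $K^{p-1}\|\nabla_T v\|_{L^p(\{|v_j-v|>\delta\})}\to 0$ via absolute continuity of the fixed integrand $|\nabla_T v|^p$. Replacing your generic $\eta_\ell$ by this radial projection and using this sign observation is what is needed to make your outline into a proof.
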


\begin{remark}
Proposition \ref{suboptimal compactness} holds also for $1<p<2$, but in this case the proof has a few additional technicalities, as \eqref{eq:algineq} below does not hold. As we will ultimately apply the proposition with $p:=n-1\geq 2$, we decided to omit this case. 
\end{remark}

\begin{proof}[Proof of Proposition \ref{suboptimal compactness}]
The proof is essentially the same as in \cite[ Theorem  on  page 3]{Hardt1994a}, the only difference is the presence of the right-hand side $g_j$ in estimate \eqref{eq:2nd_estimate_for_I} below.
It suffices to prove that for $q\in[1,p)$ fixed, and for each $\delta\in (0,1]$,
\begin{equation}\label{eq:delta_closeness}
\int_{\S}|\nabla_T v_j-\nabla_T v|^q\, \d \haus=o(\delta)+o_\delta(1/j)\,,
\end{equation} 
where, for a function $M(\delta,j)$, we say that
\begin{enumerate}
\item $M(\delta,j)= o(\delta)$ if $ \lim_{\delta\searrow 0}\sup_{j\in \N}|M(\delta,j)|=0$\,,\\[-10pt]
\item $M(\delta,j)= o_\delta(1/j)$ if for each $\delta\in (0,1]$ we have $\lim_{j\to\infty}|M(\delta,j)|=0$\,.
\end{enumerate}
The assertion then follows by first passing to the limit in \eqref{eq:delta_closeness} as $j\to \infty$ and then $\delta\searrow 0$. 

Since $v_j\to v$ strongly in $L^{p}(\S;\R^n)$, up to passing to a non-relabeled subsequence we may assume that $v_j\to v$ also pointwise $\haus$-a.e. on $\S$. 
We then consider the sets
\begin{equation}\label{eq: sets_of_d_large_diff}
E_{\delta,j}:=\{x\in \S\colon |v_j(x)-v(x)|\geq \delta\}\,, \text{  so that  } \haus(E_{\delta,j})=o_\delta(1/j)\,.  	
\end{equation}
By weak lower semicontinuity and the assumption that $v_j\rightharpoonup v$ in $W^{1,p}(\S;\R^n)$, we have 
\[\|v\|_{W^{1,p}(\S)}\leq \liminf_{j\to\infty}\|v_j\|_{W^{1,p}(\S)}\leq K\,,\] 
hence, by H\"older's inequality,
\begin{align*}
\int_{E_{\delta,j}}|\nabla_Tv_j-\nabla_Tv|^q\,\d \haus
&
\leq [\haus(E_{\delta,j})]^{\frac{p-q}{p}}\|\nabla_T v_j-\nabla_T v\|_{L^p(E_{\delta,
 j
 })}^{
q
}
\\
&
\lesssim K^q \EEE [\haus(E_{\delta,j})]^{\frac{p-q}{p}}\,,
\end{align*}
so that, by \eqref{eq: sets_of_d_large_diff},
\begin{equation}\label{eq:estimate_on_large_dev_set}
\int_{E_{\delta,j}}|\nabla_Tv_j-\nabla_Tv|^q\,\d \haus=o_\delta(1/j)\,.
\end{equation}
The rest of the proof is dedicated to proving that 
\begin{equation}\label{eq:estimate_on_small_dev_set}
\int_{\S\setminus E_{\delta,j}}|\nabla_Tv_j-\nabla_Tv|^{p}\,\d \haus=o(\delta)+o_\delta(1/j)\,,
\end{equation}	
as then \eqref{eq:delta_closeness} follows from \eqref{eq:estimate_on_large_dev_set}, \eqref{eq:estimate_on_small_dev_set} and another application of H\"older's inequality. 
In order to prove \eqref{eq:estimate_on_small_dev_set}, we define the auxiliary function 
\begin{equation}\label{eq:auxiliary_test_function}
\eta:\R^n\to \R^n, \quad \eta(y):=\min\big\{\delta/|y|,1\big\}y\,, 
\end{equation}
for which $\|\eta\|_{L^\infty(\R^{n})}\leq \delta$. Let us recall here the elementary algebraic inequality
\begin{equation}
\label{eq:algineq}
\langle |a|^{p-2}a-|b|^{p-2}b, a-b\rangle\geq 2^{2-p}|a-b|^{p} \ \ \text{for all }  a,b\in \R^m \text{ and  }p\geq 2\,,
\end{equation}
which follows from the simple observation
\begin{align*}
\langle |a|^{p-2}a-|b|^{p-2}b, a-b\rangle&=\frac{|a|^{p-2} + |b|^{p-2}}{2}|a-b|^2+ \frac{(|a|^{p-2} - |b|^{p-2})(|a|^2 - |b|^2)}{2}\\
&\geq \frac{|a|^{p-2} + |b|^{p-2}}{2}|a-b|^2\geq 2^{2-p}|a-b|^{p}\,.
\end{align*}
Since $\eta(y)=y$ when $|y|\leq \delta$, using \eqref{eq:algineq} we can estimate
\begin{align}
\begin{split}
\label{eq:first_estimate_on_bad_set}
\int_{\S\setminus E_{\delta,j}} |\nabla_Tv_j-\nabla_Tv|^{p}&\lesssim \int_{\S\setminus E_{\delta,j}}(|\nabla_Tv_j|^{p-2}\nabla_Tv_j -  |\nabla_Tv|^{p-2}\nabla_Tv):\nabla_T(v_j-v)\\
&= \int_{\S\setminus E_{\delta,j}} |\nabla_Tv_j|^{p-2}\nabla_T v_j:\nabla_T(\eta\circ(v_j - v))\\
& \quad -\int_{\S\setminus E_{\delta,j}} |\nabla_Tv|^{p-2}\nabla_T v: \nabla_T(v_j-v)\\
&=:\int_{\S\setminus E_{\delta,j}} \tp I_j- \int_{\S\setminus E_{\delta,j}} \tp{II}_j\,.
\end{split}
\end{align}
For the second term in the last line of \eqref{eq:first_estimate_on_bad_set} we can easily estimate
\begin{align}\label{second_term_small}
\begin{split}
\left|\int_{\S\setminus E_{\delta,j}}\tp{II}_j\right|
&\leq \left|\int_{\S}|\nabla_Tv|^{p-2}\nabla_T v: \nabla_T(v_j-v)\right|+\int_{E_{\delta,j}} |\nabla_Tv|^{p-1}(|\nabla_Tv|+|\nabla_Tv_j|)\\
&\lesssim \left|\int_{\S}|\nabla_Tv|^{p-2}\nabla_T v: \nabla_T(v_j-v)\right|+K\left(\int_{E_{\delta,j}} |\nabla_Tv|^{p}\right)^{\frac{1}{p'}}\\
& = o_\delta(1/j)\,,
\end{split}
\end{align}
where we used $\nabla_T(v_j-v)\rightharpoonup 0$ weakly in $L^{p}(\S;\R^n)$, $|\nabla_Tv|^{p-2}\nabla_Tv\in L^{p'}(\S;\R^n)$, and that by \eqref{eq: sets_of_d_large_diff}, $\haus (E_{\delta,j})= o_\delta(1/j)$. We now focus on estimating the first term in the last line of \eqref{eq:first_estimate_on_bad_set}, for which clearly
\begin{align}\label{eq:estimate_for_I}
\int_{\S\setminus E_{\delta,j}} \tp I_j=\int_{\S} \tp I_j-\int_{E_{\delta,j}} \tp I_j\,,
\end{align}
and we will estimate the two integrals separately. Testing \eqref{eq:approximate_equation} against $\eta\circ(v_j-v)$, the bounds \eqref{eq:uniform_energy_L1_estimate} and \eqref{eq:strong_convergence_to_0_in_the_predual} yield
\begin{align}\label{eq:2nd_estimate_for_I}
\begin{split}
\left|\int_{\S} \tp I_j \right|&\leq\left|\int_{\S} \big\langle   f_j,\eta\circ(v_j-v)\big\rangle\right|+\left|(g_j,\eta\circ(v_j-v))_{W^{-1,p'},W^{1,p}}\right|\\
&\leq \|\eta\|_{L^\infty(\R^n)}\sup_{j\in \N}\big\|f_j\big\|_{L^1(\S)}+\left|(g_j,\eta\circ(v_j-v))_{W^{-1,p'},W^{1,p}}\right|\\
&\leq K\delta+o_\delta (1/j)=  o(\delta)+o_\delta(1/j)\,.
\end{split}
\end{align}
On the other hand, note that $\eta\circ(v_j-v)=\delta\frac{v_j-v}{|v_j-v|}$ on $E_{\delta,j}$ by \eqref{eq: sets_of_d_large_diff} and \eqref{eq:auxiliary_test_function}, and therefore, $\haus$-a.e. on this set,
\begin{align}\label{eq: calculation_of_I_on_E_delta_i}
\begin{split}	
\tp I_j &=\delta|\nabla_Tv_j|^{p-2}\nabla_Tv_j:\nabla_T \frac{v_j-v}{|v_j-v|}\\
&=\delta|\nabla_Tv_j|^{p-2}\sum_{l=1}^n\bigg\langle \nabla_Tv_j^l,\frac{|v_j-v|\nabla_T(v_j^l-v^l)-(v_j^l-v^l)\nabla_T|v_j-v|}{|v_j-v|^2} \bigg\rangle\\
&=: \tp{I}^{'}_{j,\delta} -\tp{I}^{''}_{j,\delta}\,,
\end{split}
\end{align}
where
\begin{align}\label{eq: I'}
\begin{split}
\tp{I}^{'}_{j,\delta}&:=\delta \frac{|\nabla_Tv_j|^{p-2}}{|v_j-v|} \sum_{l=1}^n\Big\langle \nabla_Tv_j^l,\nabla_Tv_j^l-\frac{(v_j^l-v^l)}{|v_j-v|^2}\sum_{m=1}^n(v_j^m-v^m)\nabla_T v_j^m\Big\rangle\\[2pt]
&=\delta \frac{|\nabla_Tv_j|^{p-2}}{|v_j-v|}\Big(|\nabla_T v_j|^2-\frac{\big|\sum_{l=1}^n (v_j^l-v^l)\nabla_Tv_j^l\big|^2}{|v_j-v|^2}\Big)\,,
\end{split}
\end{align}
and
\begin{align}\label{eq: I''}
\begin{split}	
\tp{I}^{''}_{j,\delta}&:= \delta \frac{|\nabla_Tv_j|^{p-2}}{|v_j-v|}\sum_{l=1}^n\Big\langle \nabla_Tv_j^l,\nabla_Tv^l-\frac{(v_j^l-v^l)}{|v_j-v|^2}\sum_{m=1}^n(v_j^m-v^m)\nabla_T v^m\Big\rangle\,.
\end{split}
\end{align} 
By the Cauchy-Schwarz inequality we see that $\tp{I}'_{j,\delta}\geq 0$.  
Thus,  combining \eqref{eq:first_estimate_on_bad_set}--\eqref{eq: I''}, we get
\begin{align}\label{eq:second_estimate_on_bad_set}
\begin{split}	
\int_{\S\setminus E_{\delta,j}} |\nabla_Tv_j-\nabla_Tv|^{p}&\lesssim o(\delta)+o_\delta(1/j)-\int_{E_{\delta,j}}\tp{I}'_{j,\delta}+\int_{E_{\delta,j}}\tp{I}^{''}_{j,\delta}\\
&\leq o(\delta)+o_\delta(1/j)+\int_{E_{\delta,j}}\tp{I}^{''}_{j,\delta}\,.
\end{split}
\end{align}
Finally, for the last integral in \eqref{eq:second_estimate_on_bad_set}, recalling \eqref{eq: sets_of_d_large_diff}, 
we have  the following pointwise estimate 
 $\haus$-a.e.   on $E_{\delta,j}$:
\begin{align*}
|\tp{I}^{''}_{j,\delta}|&
\leq  |\nabla_Tv_j|^{p-2}\bigg(|\nabla_Tv_j||\nabla_Tv|+\frac{|v_j-v|^2|\nabla_Tv_j||\nabla_Tv|}{|v_j-v|^2}\bigg)
\leq 2|\nabla_Tv_j|^{p-1}|\nabla_Tv|\,.
\end{align*}
Hence, by Hölder's inequality and \eqref{eq:uniform_energy_L1_estimate}, we obtain
\begin{align}\label{eq:last_estimate_for_bad_set}
\begin{split}	
\left|\int_{E_{\delta,j}}\tp{I}^{''}_{j,\delta}\right|&\leq 2\int_{E_{\delta,j}} |\nabla_Tv_j|^{p-1}|\nabla_Tv|\\ & \leq 2\|\nabla_Tv_j\|^{p-1}_{L^{p}(E_{\delta,j})} \|\nabla_Tv\|_{L^{p}(E_{\delta,j})}
\leq  2K^{ p-1}
\|\nabla_Tv\|_{L^{p}(E_{\delta,j})}= o_\delta(1/j)\,,
\end{split}
\end{align}
since $\nabla_T v\in L^{p}(\S)$ and $\haus(E_{\delta,j})=o_\delta(1/j)$. Combining \eqref{eq:second_estimate_on_bad_set} and \eqref{eq:last_estimate_for_bad_set} we arrive at \eqref{eq:estimate_on_small_dev_set}, which together with \eqref{eq:estimate_on_large_dev_set} finally yields \eqref{eq:delta_closeness}, completing the proof.
\end{proof}

We next state the \textit{Brezis--Lieb Lemma}, which is a sharper version of \textit{Fatou's Lemma}, see \textit{e.g.} \cite[Chapter 1, Theorem 8]{Evans1990} for the simple proof.

\begin{lemma}\label{lem:brezis_lieb}
Let $p\geq 1$, $(v_j)_{j\in \N}\subset W^{1,p}(\S;\R^n)$ be such that $v_j\rightharpoonup v$ weakly in $W^{1,p}(\S;\R^n)$ and $\nabla_Tv_j \rightarrow \nabla_Tv$ pointwise $\haus$-a.e. on $\S$. Then,
\begin{equation}\label{eq:brezis_lieb}
\lim_{j\to \infty} \left[\fint_{\S}|\nabla_Tv_j|^{p} - \fint_{\S}|\nabla_Tv_j-\nabla_Tv|^{p} \right]= \fint_{\S}|\nabla_Tv|^{p}\,.
\end{equation}
\end{lemma}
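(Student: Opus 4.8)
The plan is to apply the classical Brezis--Lieb argument to the sequence of gradients. Writing $f_j := \nabla_T v_j$ and $f := \nabla_T v$, the weak convergence $v_j\rightharpoonup v$ in $W^{1,p}(\S;\R^n)$ gives, via the uniform boundedness principle, a constant $M>0$ with $M := \sup_{j}\|f_j\|_{L^p(\S)} + \|f\|_{L^p(\S)}<+\infty$, while by hypothesis $f_j\to f$ pointwise $\haus$-a.e.\ on $\S$. It then suffices to prove that
\[
\big|f_j\big|^p - \big|f_j - f\big|^p \longrightarrow |f|^p \qquad \text{strongly in } L^1(\S),
\]
since this trivially implies \eqref{eq:brezis_lieb}.

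First I would record the elementary inequality: for every $p\geq 1$ and every $\varepsilon>0$ there is a constant $C_\varepsilon=C_\varepsilon(p)>0$ such that
\[
\big||a+b|^p - |a|^p\big| \leq \varepsilon\,|a|^p + C_\varepsilon\,|b|^p \qquad \text{for all } a,b\in\R^m.
\]
This follows by $p$-homogeneity, reducing to the compact set $\{|a|^p+|b|^p=1\}$, together with the bound $\big||a+b|^p-|a|^p\big| \leq p\,|b|\,(|a|+|b|)^{p-1}$, which makes the left-hand side negligible compared with $\varepsilon|a|^p$ in the region where $|b|$ is small. Applying this with $a := f_j - f$ and $b := f$, and using the triangle inequality, we obtain the pointwise estimate
\[
\Big||f_j|^p - |f_j-f|^p - |f|^p\Big| \leq \varepsilon\,|f_j-f|^p + (C_\varepsilon+1)\,|f|^p \qquad \haus\text{-a.e.\ on }\S.
\]

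Finally, for fixed $\varepsilon>0$ I would introduce the truncated remainders
\[
W_{j,\varepsilon} := \Big(\,\big||f_j|^p - |f_j-f|^p - |f|^p\big| - \varepsilon\,|f_j-f|^p\,\Big)^{+}.
\]
By the previous display $0\leq W_{j,\varepsilon} \leq (C_\varepsilon+1)|f|^p\in L^1(\S)$, a bound independent of $j$; and since $f_j\to f$ $\haus$-a.e.\ and $t\mapsto t^{+}$ is continuous, $W_{j,\varepsilon}\to 0$ $\haus$-a.e.\ on $\S$. Dominated convergence then yields $\fint_\S W_{j,\varepsilon}\,\d\haus\to 0$ as $j\to\infty$. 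Since, by definition of $W_{j,\varepsilon}$,
\[
\big||f_j|^p - |f_j-f|^p - |f|^p\big| \leq W_{j,\varepsilon} + \varepsilon\,|f_j-f|^p
\]
and $\|f_j-f\|_{L^p(\S)}^p \leq M^p$ uniformly in $j$, integrating over $\S$ and passing to the limit gives $\limsup_{j\to\infty}\fint_\S\big||f_j|^p-|f_j-f|^p-|f|^p\big|\,\d\haus \leq \varepsilon\,M^p$; letting $\varepsilon\searrow 0$ completes the proof. The only genuinely delicate point is producing a $j$-independent integrable dominating function, which is exactly why one passes through the positive parts $W_{j,\varepsilon}$ rather than trying to bound $\big||f_j|^p-|f_j-f|^p-|f|^p\big|$ directly; everything else is routine.
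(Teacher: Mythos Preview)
Your proof is correct and follows exactly the classical Brezis--Lieb argument; the paper does not give its own proof but simply cites \cite[Chapter~1, Theorem~8]{Evans1990}, which is precisely this argument.
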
 

The next lemma is also well-known and gives an analogous statement to \eqref{eq:brezis_lieb} for the volume functional,  relying on its multilinear structure.

\begin{lemma}\label{volume_brezis_lieb}
Let $(v_j)_{j\in \N}\subset W^{1,n-1}(\S;\R^n)$ be such that $v_j\rightharpoonup v$ weakly in $W^{1,n-1}(\S;\R^n)$. Then,
\begin{equation}\label{eq:vol_brezis_lieb}
\lim_{j\to \infty} \mc V_n(v_j)=\mc V_n(v)+ \lim_{j\to \infty}  \mc V_n(v_j-v)\,.
\end{equation}
\end{lemma}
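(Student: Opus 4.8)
The statement is a Brezis--Lieb type decomposition for the volume functional $\mc V_n$ under weak $W^{1,n-1}$-convergence. The natural approach is to exploit the multilinear structure of $\mc V_n$, which was already used implicitly in the expansion of Lemma~\ref{lemma_on_null_Lagrangians}. Writing $w_j := v_j - v$, so that $w_j \rightharpoonup 0$ weakly in $W^{1,n-1}(\S;\R^n)$, the identity \eqref{eq:vol_brezis_lieb} is equivalent to showing
\begin{equation*}
\mc V_n(v + w_j) = \mc V_n(v) + \mc V_n(w_j) + o(1) \quad \text{as } j \to \infty.
\end{equation*}
The first step is to pick, via \eqref{eq:Sobolev_bound_for_U}, extensions $V, W_j \in W^{1,n}(\B;\R^n)$ of $v, w_j$ respectively, with $\|W_j\|_{W^{1,n}(\B)} \lesssim \|w_j\|_{W^{1,n-1}(\S)}$ uniformly bounded, and $W_j \rightharpoonup 0$ weakly in $W^{1,n}(\B;\R^n)$ (this last point may require replacing $W_j$ by the extension of $w_j$ produced by a \emph{linear} bounded extension operator, so that weak convergence on the boundary transfers to weak convergence of the extensions; alternatively one invokes compact trace embeddings to arrange $W_j \to 0$ strongly in $L^n(\B)$). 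By \eqref{eq:bulkbdry}, $\mc V_n(v + w_j) = \fint_{\B} \det \nabla(V + W_j)\,\mathrm{d}x$, and similarly for the other two terms, so the problem is reduced to a statement about determinants of weakly convergent $W^{1,n}$-maps on $\B$.

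The second step is the algebraic expansion of the determinant. For $n\times n$ matrices $A, B$ one has the multilinear identity $\det(A+B) = \sum_{S \subseteq \{1,\dots,n\}} \det(A; B; S)$, where the sum is over all subsets $S$ indexing which columns are taken from $B$ and the rest from $A$; the term $S = \emptyset$ gives $\det A$ and $S = \{1,\dots,n\}$ gives $\det B$. Applying this with $A = \nabla V$, $B = \nabla W_j$ and integrating over $\B$,
\begin{equation*}
\int_{\B} \det\nabla(V + W_j) = \int_{\B} \det\nabla V + \int_{\B} \det\nabla W_j + \sum_{\emptyset \neq S \subsetneq \{1,\dots,n\}} \int_{\B} \det(\nabla V; \nabla W_j; S).
\end{equation*}
It remains to show that each mixed term with $1 \leq |S| \leq n-1$ tends to $0$. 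Here one uses the weak continuity of minors (the div-curl structure): each mixed term is a null Lagrangian that can be integrated by parts, producing an expression multilinear in $\nabla V$ (fewer than $n$ times) and $\nabla W_j$ (at least once, but fewer than $n$ times), paired against $W_j$ or $V$ undifferentiated. Since $W_j \rightharpoonup 0$ weakly in $W^{1,n}$ and, by Rellich--Kondrachov, $W_j \to 0$ strongly in $L^q(\B)$ for every $q < n^*$ (in particular for every finite $q$ when $n$ is treated as the exponent, more precisely for every $q$ with the borderline handled by interpolation), the undifferentiated factor converges strongly to $0$ in a space dual to the $L^{n/k}$-type integrability of the product of the remaining gradient factors; Hölder's inequality with exponents summing to $1$ then forces the mixed term to $0$. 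Concretely, in a term with $|S| = k$ the integrand after integration by parts is a product of $k-1$ factors $\nabla W_j \in L^n$, $n - k$ factors $\nabla V \in L^n$, and one factor $W_j$ or $V$; the $k-1 + n - k = n-1$ gradient factors lie in $L^{n/(n-1)}$ by Hölder, and pairing with $W_j \to 0$ strongly in $L^n$ (valid since $W_j$ is bounded in $W^{1,n} \hookrightarrow L^q$ for all $q < \infty$ and converges weakly, hence strongly in every $L^q$, $q < \infty$) closes the estimate.

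\textbf{Main obstacle.} The delicate point is the borderline integrability: the gradient factors live exactly in the critical space $L^n(\B)$, so the undifferentiated factor must be shown to converge \emph{strongly} to $0$ in $L^n(\B)$ (or in $L^{n}$ at least locally and with a tail estimate at the boundary), which is \emph{not} automatic from weak $W^{1,n}$-convergence alone — indeed $W^{1,n} \hookrightarrow L^n$ is not compact globally on $\B$. One resolves this by either (i) a cut-off/truncation argument combined with the fact that the mixed terms are null Lagrangians so their contribution is controlled by the boundary data, which is where the hypothesis $v_j \rightharpoonup v$ on $\S$ (not just on $\B$) is used; or (ii) noting that after integration by parts the mixed terms are actually \emph{boundary} integrals over $\S$ of expressions of the form $\langle v \text{ or } w_j,\ \text{(mixed minor of }\nabla_T v, \nabla_T w_j)\rangle$, for which the relevant compact embedding $W^{1,n-1}(\S) \hookrightarrow\hookrightarrow L^q(\S)$ for $q < (n-1)^* $ or simply $q<\infty$ for sphere-valued traces does hold, since $\S$ is $(n-1)$-dimensional and $v_j \rightharpoonup v$ in $W^{1,n-1}(\S)$ gives $v_j \to v$ strongly in every $L^q(\S)$, $q<\infty$. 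Approach (ii) is cleaner and mirrors the structure already exploited in Lemma~\ref{lemma_on_null_Lagrangians} and the proof of Lemma~\ref{lem:TUBV}; this is the route I would take. Finally one should double-check that the limit $\lim_{j\to\infty}\mc V_n(v_j - v)$ on the right-hand side exists after passing to a subsequence (a priori it need only exist along a subsequence, but since the statement asserts the limit on the left exists, one works along a subsequence realizing it and the identity follows; a remark to this effect suffices).
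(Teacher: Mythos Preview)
Your approach is essentially the same as the paper's: extend to the bulk via \eqref{eq:bulkbdry}, expand $\det\nabla(V+W_j)$ multilinearly, and show the mixed terms vanish. The paper writes the expansion with differential forms as $\sum_{K,L}\pm\int_{\B} \mathrm{d}V^K\wedge \mathrm{d}W_j^L$ over complementary index sets and dispatches each mixed term in one line by invoking the weak continuity of minors directly: since $W_j\rightharpoonup 0$ in $W^{1,n}(\B)$, the $(n-m)$-minor $\mathrm{d}W_j^L\rightharpoonup 0$ weakly in $L^{n/(n-m)}(\B)$, and $\mathrm{d}V^K$ is a fixed element of the dual space $L^{n/m}(\B)$, so the pairing tends to zero. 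This sidesteps your integration-by-parts route and your ``main obstacle'' altogether.

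About that obstacle: your assertion that $W^{1,n}(\B)\hookrightarrow L^n(\B)$ is not compact is \emph{incorrect} on the bounded domain $\B$. Rellich--Kondrachov gives compact embedding $W^{1,n}(\B)\hookrightarrow L^q(\B)$ for every $q<\infty$ (the critical exponent here is $n^*=\infty$), in particular for $q=n$. So even along your integration-by-parts route the undifferentiated factor $W_j$ converges strongly to $0$ in $L^n(\B)$ and neither workaround (i) nor (ii) is needed---though you would still have to track the boundary terms produced by the integration by parts, which is another reason the paper's direct citation of weak continuity of minors is the cleaner path.
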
	
\begin{proof}
We consider  
extensions $(V_j)_{j\in \N}\subset W^{1,n}(\B;\R^n)$ of $(v_j)_{j\in \N}$ as in \eqref{eq:Sobolev_bound_for_U}, as well as the 
 corresponding
  extension $V$ of $v$.
By \eqref{eq:Sobolev_bound_for_U} and $v_j\rightharpoonup v$ in $W^{1,n-1}(\S;\R^n)$ it is easy to see that up to a subsequence, 
\begin{equation*}
V_j\rightharpoonup V \ \text{ weakly in } W^{1,n}(\B;\R^n)\,.
\end{equation*} 
Recalling \eqref{eq:bulkbdry}, \eqref{eq:vol_brezis_lieb} is equivalent to
\begin{equation}\label{eq:vol_brezis_lieb_bulk}
\lim_{j\to \infty}  \fint_{\mb B^n} \det \n V_j\d x= \fint_{\mb B^n} \det \n V \d x+ \lim_{j\to \infty}  \fint_{\mb B^n} \det \n (V_j-V)\d x\,.
\end{equation}
Let us set for brevity $W_j:=V_j-V\rightharpoonup 0$ in $ W^{1,n}(\mb B^n;\R^n)$.  The calculation required to obtain \eqref{eq:vol_brezis_lieb_bulk} can be concisely expressed using differential forms, since $\det \n V \d x= \tp d V^1\wedge \dots \wedge \tp d V^n$. Indeed, we calculate
\begin{align}\label{eq: det_of_sum}
\begin{split}	
\fint_{\mb B^n} \det \n V_j \d x &= \fint_{\mb B^n} \det \n (V+W_j) \d x\\
& = \fint_{\B} \tp d(V^1+W_j^1)\wedge\dots \wedge \tp{d}(V^n+W_j^n)\\
& = \fint_{\B} (\det \n V + \det \n W_j )\d x+ \sum_{\#K,\# L>0} \tp{sgn}(\sigma_{K,L}) \fint_{\B} \tp d V^K \wedge \tp d W^L_j \,, 
\end{split}
\end{align}
where $K=\{k_1,\dots, k_m\}$ and $L=\{l_1,\dots l_{n-m}\}$ are complementary subsets of $\{1,\dots, n\}$ such that $k_1<\dots <k_m,\ l_1<\dots <l_{n-m}$, $\sigma_{K,L}$ is the permutation which naturally orders $(K,L)$,
$\tp{d}V^K: = \d V^{k_1}\wedge \dots \wedge \tp{d} V^{k_m}$, and similary for $\tp{d}W^L_j$. Since $\#L>0$ and $W_j\rightharpoonup 0$ in $ W^{1,n}(\B;\R^n)$, each of the terms $ \tp dW_j^L$ in the sum in \eqref{eq: det_of_sum}  converges weakly to 0 in $L^{\frac{n}{n- m}}(\B)$, 
see for instance \cite[Theorem 1.1]{Robbin1987}. As $\tp{d}V^K\in L^{\frac{n}{m}}(\B)$, \eqref{eq:vol_brezis_lieb_bulk} 
 and thus also \eqref{eq:vol_brezis_lieb}
  follow.
\end{proof}

The final auxiliary ingredient we need for the proof of Theorem \ref{thm:compactintro} is \textit{Ekeland's variational principle} \cite{Ekeland1974} applied to the functional $\mc E_{n-1}$, which is continuous in the strong $W^{1,n-1}(\S;\R^n)$-topology. 

\begin{lemma}[Ekeland's variational principle for $\mc E_{n-1}$]\label{Ekeland'slemma}
Let $(u_j)_{j\in \N}\subset W^{1,n-1}(\S;\R^n)$ be a sequence satisfying \eqref{eq:vol_positive} which is minimizing for $\mc E_{n-1}$, \textit{i.e.},
\begin{equation}\label{eq:min_seq_1}
\lim_{j\to \infty}\mc E_{n-1}(u_j)=0\,.
\end{equation}
Then, there exists another minimizing sequence $(v_j)_{j\in \N}\subset W^{1,n-1}(\S;\R^n)$, \textit{i.e.},
\begin{equation}\label{eq:min_seq_2}
\lim_{j\to \infty}\mc E_{n-1}(v_j)=0\,,
\end{equation}
satisfying \eqref{eq:vol_positive}, with the following further properties:
\begin{equation}\label{eq:dif_of_min_seq_negligible}
\lim_{j\to \infty} \|u_j-v_j\|_{W^{1,n-1}(\S)}=0\,,
\end{equation}
and, in the sense of distributions, 
\begin{equation}\label{eq:Palais_Smale}
\Delta_{n-1}(v_j)+H_{v_j}J(v_j)=g_j\,, 
\end{equation}
where $g_j\to 0 $ strongly in
$(W^{1,n-1})^*(\S;\R^n)$\,.
\end{lemma}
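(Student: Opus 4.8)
The plan is to apply Ekeland's variational principle in the metric space $\bigl(W^{1,n-1}(\S;\R^n), d\bigr)$ with $d(u,v):=\|u-v\|_{W^{1,n-1}(\S)}$, using the functional $\mc E_{n-1}$, which by Proposition~\ref{prop:isop} is bounded below by $0$ and is continuous (indeed lower semicontinuous suffices) in the strong $W^{1,n-1}$-topology. First I would fix $\e_j:=\mc E_{n-1}(u_j)\to 0$ from \eqref{eq:min_seq_1}; since $\inf \mc E_{n-1}=0$, applying Ekeland with the choices $\lambda_j:=\sqrt{\e_j}$ produces a sequence $(v_j)_{j\in\N}$ with $\mc E_{n-1}(v_j)\le \mc E_{n-1}(u_j)$, with $\|u_j-v_j\|_{W^{1,n-1}(\S)}\le \sqrt{\e_j}\to 0$ (which gives both \eqref{eq:min_seq_2}, via continuity and the sandwich, and \eqref{eq:dif_of_min_seq_negligible}), and with the variational inequality
\begin{equation*}
\mc E_{n-1}(w)\ge \mc E_{n-1}(v_j)-\sqrt{\e_j}\,\|w-v_j\|_{W^{1,n-1}(\S)}\qquad \text{for all }w\in W^{1,n-1}(\S;\R^n).
\end{equation*}
One technical point to handle at the outset: \eqref{eq:vol_positive} must persist for $(v_j)$. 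Since $\mc E_{n-1}(v_j)<+\infty$ forces $\mc V_n(v_j)\ne 0$, and $\mc V_n$ is continuous on $W^{1,n-1}(\S;\R^n)$ with $\mc V_n(u_j)>0$ and $\|u_j-v_j\|_{W^{1,n-1}}\to 0$, we get $\mc V_n(v_j)>0$ for $j$ large, which is all we need after discarding finitely many terms.

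Next I would convert the Ekeland inequality into the perturbed Euler--Lagrange equation \eqref{eq:Palais_Smale}. Fix $\psi\in W^{1,n-1}(\S;\R^n)$ and insert $w=v_j+t\psi$ for $t>0$; dividing by $t$ and letting $t\downarrow 0$, the left side produces the Gateaux derivative $\mc E_{n-1}'(v_j)[\psi]$ computed in the proof of Lemma~\ref{lemma: first_variation} (here I use that $v_j\in W^{1,n-1}$ with $\mc V_n(v_j)>0$ so $\mc E_{n-1}$ is Gateaux differentiable at $v_j$ with the explicit formula there), while the right side is bounded below by $-\sqrt{\e_j}\,\|\psi\|_{W^{1,n-1}(\S)}$. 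Repeating with $-\psi$ yields
\begin{equation*}
\bigl|\mc E_{n-1}'(v_j)[\psi]\bigr|\le \sqrt{\e_j}\,\|\psi\|_{W^{1,n-1}(\S)}\qquad\text{for all }\psi\in W^{1,n-1}(\S;\R^n),
\end{equation*}
i.e. $\mc E_{n-1}'(v_j)\to 0$ in $(W^{1,n-1})^*(\S;\R^n)$. Unwinding the expression for $\mc E_{n-1}'(v_j)$ from Lemma~\ref{lemma: first_variation}, and recalling $H_{v_j}=(n-1)^{(n-1)/2}\mc D_{n-1}(v_j)/\mc V_n(v_j)$, the distribution $\Delta_{n-1}(v_j)+H_{v_j}J(v_j)$ equals (a positive constant multiple, depending on $\mc D_{n-1}(v_j)$ and $\mc V_n(v_j)$, of) $\mc E_{n-1}'(v_j)$; calling this object $g_j$ gives exactly \eqref{eq:Palais_Smale}. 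One must check this prefactor stays controlled, which it does: from $\mc E_{n-1}(v_j)\to 0$ together with Hadamard's inequality \eqref{eq:Hadamard} and the parametric isoperimetric inequality \eqref{eq:param_isoperimetric_ineq}, the ratio $\mc D_{n-1}(v_j)^{n/(n-1)}/|\mc V_n(v_j)|\to 1$; combined with the a priori bound $\sup_j\|v_j\|_{W^{1,n-1}(\S)}<+\infty$ (which follows from $\|u_j-v_j\|_{W^{1,n-1}}\to 0$ once we know $\sup_j\|u_j\|_{W^{1,n-1}}<\infty$ — and if the latter is not assumed, it is forced by $\mc E_{n-1}(u_j)\to 0$ after normalization, which is the standing reduction of this section), the prefactor is bounded above and below, so it does not affect the convergence $g_j\to 0$ in $(W^{1,n-1})^*$.

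I expect the main obstacle to be purely bookkeeping rather than conceptual: namely, verifying that the map $w\mapsto \mc E_{n-1}(w)$ is genuinely admissible for Ekeland (lower semicontinuity and lower boundedness on the complete metric space $W^{1,n-1}(\S;\R^n)$ — lower semicontinuity is clear once one notes $\mc E_{n-1}$ is continuous where finite and equals $+\infty$ where $\mc V_n=0$, and a limit of maps with bounded deficit has $\mc V_n\ne 0$ by the continuity of $\mc V_n$, so no lower semicontinuity is lost at such points), and then tracking that the Gateaux derivative identity from Lemma~\ref{lemma: first_variation}, which was derived formally, is valid at $v_j$ with all the terms in $L^1$ or better so that the pairing with $\psi\in W^{1,n-1}$ makes sense and the difference quotient passes to the limit by dominated convergence. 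The term requiring the most care is $\mc V_n'(v_j)[\psi]$, whose meaning for merely $W^{1,n-1}$ maps rests on the bulk-extension identity \eqref{eq:bulkbdry} and the continuous dependence \eqref{eq:Sobolev_bound_for_U} of extensions on boundary data, exactly as in the proof of Lemma~\ref{lemma: first_variation}; once that is in place the $p$-Laplacian term is standard and the estimate on $g_j$ in the negative Sobolev norm is immediate from the displayed inequality above.
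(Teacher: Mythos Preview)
Your approach is exactly the one the paper intends: the paper does not give a proof of this lemma at all, merely stating that it follows from Ekeland's variational principle applied to $\mc E_{n-1}$, ``which is continuous in the strong $W^{1,n-1}(\S;\R^n)$-topology''. Your sketch is the standard fleshing out of that invocation and is correct in all essential points.

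One small correction to your bookkeeping: your claim that ``a limit of maps with bounded deficit has $\mc V_n\ne 0$ by the continuity of $\mc V_n$'' is false as stated --- take $u_k=\tfrac{1}{k}\,\id_\S$, which has $\mc E_{n-1}(u_k)=0$ but converges strongly to the constant $0$, where $\mc V_n=0$ and $\mc E_{n-1}=+\infty$. So $\mc E_{n-1}$ is genuinely \emph{not} lower semicontinuous on all of $W^{1,n-1}(\S;\R^n)$ (the paper's parenthetical ``continuous'' has the same defect). This is harmless in practice: in the only place the lemma is used (Lemma~\ref{weak_to_strong}) the sequence $(u_j)$ is weakly convergent, hence bounded, and one may run Ekeland on the complete metric space $\{w:\mc D_{n-1}(w)\ge c\}$ for any fixed $0<c<\inf_j\mc D_{n-1}(u_j)$, on which your argument for lower semicontinuity \emph{does} go through. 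The resulting $v_j$ lies in the interior of this set, so the Gateaux-derivative step and the prefactor bound work exactly as you wrote.
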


Lemma \ref{Ekeland'slemma} asserts that any minimizing sequence can be replaced with another minimizing sequence of approximate solutions to the Euler--Lagrange system \eqref{eq:EL_H_system}, where the perturbations are 
 small  in the dual
of $W^{1,n-1}(\S;\R^n)$; sequences verifying this type of condition are usually referred to as \textit{Palais--Smale sequences} \cite[Section~II.2]{Struwe2008}.

\subsection{The proof of Theorem \ref{thm:compactintro}}

We are now ready to begin the core of the proof of Theorem \ref{thm:compactintro}, 
which is
strongly
 inspired by the proof of \cite[Lemma 2.1]{Caldiroli2006}  for the case $n=3$. 
The main difference, related to the genuinely nonlinear nature of $\mathcal E_{n-1}$ for $n\geq 4$, is that the expansion of $\mathcal D_{n-1}(u_j)$ in \eqref{eq:qual_expansions} is in the latter case not straightforward: we are forced to consider Palais--Smale sequences through Lemma \ref{Ekeland'slemma} and to carefully combine all the previous auxiliary results.
  
We begin with a key lemma which asserts that weakly convergent minimizing sequences are compact  in the strong sense, provided that their limit is non-constant.

\begin{lemma}\label{weak_to_strong}
Let $(u_j)_{j\in \N}\subset W^{1,n-1}(\S;\R^n)$ satisfy \eqref{eq:vol_positive}, \eqref{eq:min_seq_1},  and assume that
\begin{equation}\label{eq:u_jweak_convergence}
u_j\rightharpoonup u\ \text{weakly in } W^{1,n-1}(\S;\R^n)\,,
\end{equation} 
for some non-constant $u$. Then actually,
 \begin{equation}\label{eq:u_j_strong_convergence}
 u_j\rightarrow u\ \text{strongly in } W^{1,n-1}(\S;\R^n)\,,
 \end{equation} 
and in particular $\mc E_{n-1}(u)=0$.
\end{lemma}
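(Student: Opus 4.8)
The plan is to pass from the minimizing sequence $(u_j)$ to a Palais--Smale sequence $(v_j)$ via Lemma~\ref{Ekeland'slemma}, run the subcritical compactness machinery on $(v_j)$, and then exploit the Brezis--Lieb splittings together with the non-negativity of the deficit and a convexity argument to upgrade weak convergence to strong convergence. First I would invoke Ekeland's principle (Lemma~\ref{Ekeland'slemma}) to produce $(v_j)$ with $\|u_j-v_j\|_{W^{1,n-1}(\S)}\to 0$, hence $v_j\rightharpoonup u$ as well, still satisfying $\mc V_n(v_j)>0$ and $\mc E_{n-1}(v_j)\to 0$, and solving the perturbed Euler--Lagrange system \eqref{eq:Palais_Smale}. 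Since $\mc E_{n-1}(v_j)\to 0$ and the $(n-1)$-Dirichlet energy is bounded, the constants $H_{v_j}=(n-1)^{(n-1)/2}\mc D_{n-1}(v_j)/\mc V_n(v_j)$ are bounded (note $\mc V_n(v_j)\to |\mc V_n(v_j)|$ stays bounded below away from $0$ along a subsequence, using $\mc D_{n-1}(v_j)\gtrsim 1$ from the isoperimetric inequality together with the vanishing deficit — this needs a short argument, or one normalizes $\mc V_n(v_j)=1$), so with $f_j:=H_{v_j}J(v_j)$ we have $\sup_j\|f_j\|_{L^1(\S)}\lesssim \sup_j\|\nabla_Tv_j\|_{L^{n-1}}^{n-1}<\infty$ by Hadamard's inequality \eqref{eq:Hadamard}. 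Proposition~\ref{suboptimal compactness} (with $p:=n-1\geq 2$) then gives $v_j\to u$ strongly in $W^{1,q}(\S;\R^n)$ for all $q<n-1$, and in particular, after passing to a further subsequence, $\nabla_T v_j\to\nabla_T u$ pointwise $\haus$-a.e.

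Next I would record the two key asymptotic expansions. From Lemma~\ref{lem:brezis_lieb} applied to $v_j$ with $p=n-1$,
\begin{equation*}
\mc D_{n-1}(v_j)=\mc D_{n-1}(u)+\mc D_{n-1}(v_j-u)+o(1)\,,
\end{equation*}
and from Lemma~\ref{volume_brezis_lieb},
\begin{equation*}
\mc V_n(v_j)=\mc V_n(u)+\mc V_n(v_j-u)+o(1)\,.
\end{equation*}
Writing $a_j:=\mc D_{n-1}(v_j-u)\geq 0$, $b_j:=\mc V_n(v_j-u)$ and $A:=\mc D_{n-1}(u)$, $B:=\mc V_n(u)$, the condition $\mc E_{n-1}(v_j)\to 0$ together with $\mc V_n(v_j)>0$ reads
\begin{equation*}
(A+a_j)^{\frac{n}{n-1}}=(B+b_j)\big(1+o(1)\big)+o(1)\,.
\end{equation*}
Up to subsequences $a_j\to a\geq 0$ and $b_j\to b$ (boundedness of $a_j$ is clear from the energy bound; for $b_j$ one uses the parametric isoperimetric inequality \eqref{eq:param_isoperimetric_ineq} applied to $v_j-u$, which bounds $|b_j|$ in terms of $\fint_\S|J(v_j-u)|\lesssim a_j^{(n-1)/n}$, hence $b\geq 0$ as well after noting $b=\lim\mc V_n(v_j-u)$ and $\mc V_n(v_j-u)$ could a priori be negative — here one flips sign if needed, or more cleanly uses $|b|\le a^{(n-1)/n}$). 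Passing to the limit gives $(A+a)^{n/(n-1)}=B+b$. On the other hand, applying Proposition~\ref{prop:isop} to $u$ itself gives $B^{n/(n-1)}\le A^{n/(n-1)}$... wait — more precisely, the isoperimetric inequality $\mc E_{n-1}(u)\ge 0$ gives $A^{n/(n-1)}\geq B$ (when $B>0$; if $B\le 0$ argue separately), and the parametric inequality for $v_j-u$ gives $a^{n/(n-1)}\geq b^{n/(n-1)}$, i.e. $a\geq b$ when $b\ge0$. Combining: $B+b=(A+a)^{n/(n-1)}\geq A^{n/(n-1)}+a^{n/(n-1)}$ is the wrong direction, so instead I would use strict superadditivity/convexity of $t\mapsto t^{n/(n-1)}$: $(A+a)^{n/(n-1)}\geq A^{n/(n-1)}+a^{n/(n-1)}\geq B+b$, with equality throughout forcing $a=0$ (since $A>0$ because $u$ is non-constant, the function is strictly convex there, so $(A+a)^{n/(n-1)}=A^{n/(n-1)}+a^{n/(n-1)}$ with $A>0$ forces $a=0$). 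Hence $\mc D_{n-1}(v_j-u)\to 0$, which means $\|\nabla_T(v_j-u)\|_{L^{n-1}(\S)}\to 0$, i.e. $v_j\to u$ strongly in $W^{1,n-1}(\S;\R^n)$. By \eqref{eq:dif_of_min_seq_negligible}, $u_j\to u$ strongly as well, giving \eqref{eq:u_j_strong_convergence}. Finally, strong convergence and continuity of $\mc E_{n-1}$ yield $\mc E_{n-1}(u)=\lim\mc E_{n-1}(u_j)=0$.

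\textbf{Main obstacle.} The delicate point is the interplay of signs and the convexity argument: one must (i) ensure the limit volume $\mc V_n(u)$ and the ``defect volume'' $b=\lim\mc V_n(v_j-u)$ are handled correctly (both can a priori be of either sign, and one needs the parametric isoperimetric inequality \eqref{eq:param_isoperimetric_ineq} to control $b$ by $a$), and (ii) combine the two expansions with strict convexity of $t\mapsto t^{n/(n-1)}$ at $t=A>0$ to force the defect Dirichlet energy $a$ to vanish. The hypothesis that $u$ is non-constant enters precisely here, guaranteeing $A=\mc D_{n-1}(u)>0$ so that strict convexity bites; if $A=0$ the argument collapses, which is exactly why the non-constancy assumption is essential and why a separate construction of the M\"obius maps $\phi_j$ in Theorem~\ref{thm:compactintro} is needed to rule that case out. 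A secondary technical nuisance is confirming that the constants $H_{v_j}$ stay bounded (equivalently, that $\mc V_n(v_j)$ does not degenerate to $0$), which follows from $\mc E_{n-1}(v_j)\to 0$ combined with $\mc D_{n-1}(v_j)\gtrsim 1$ (itself a consequence of the isoperimetric inequality and the boundedness of $\mc V_n$, or is arranged by the normalization $\mc V_n(v_j)=1$).
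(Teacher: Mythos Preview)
Your argument follows the same architecture as the paper's: Ekeland's principle to pass to a Palais--Smale sequence $(v_j)$, Proposition~\ref{suboptimal compactness} for subcritical compactness and a.e.\ convergence of gradients, then the Brezis--Lieb and volume splittings \eqref{eq:brezis_lieb}--\eqref{eq:vol_brezis_lieb} combined with the nonnegativity of the deficit. The one real difference is the final algebraic step. The paper works with the \emph{concave} exponent $(n-1)/n$ and the subadditivity $|s+t|^{(n-1)/n}\le |s|^{(n-1)/n}+|t|^{(n-1)/n}$, concluding first that $\lim_j \mc V_n(v_j-u)=0$ and then, via a second chain of inequalities, that $\lim_j \mc D_{n-1}(v_j)=\mc D_{n-1}(u)$ (norm convergence plus weak convergence gives strong convergence). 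You instead use the \emph{convex} exponent $n/(n-1)$ and the superadditivity $(A+a)^{n/(n-1)}\ge A^{n/(n-1)}+a^{n/(n-1)}$, with the strict case at $A>0$ forcing $a=\lim_j\mc D_{n-1}(v_j-u)=0$ directly. The two arguments are dual and both are correct; yours is marginally shorter since it lands on $a=0$ without the extra pass.

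One point to tighten: your justification for the $L^1$ bound on $f_j=H_{v_j}J(v_j)$ is not quite right. You write that $\mc D_{n-1}(v_j)\gtrsim 1$ follows ``from the isoperimetric inequality together with the vanishing deficit'', but Proposition~\ref{prop:isop} only gives an \emph{upper} bound on $|\mc V_n|$ in terms of $\mc D_{n-1}$, not a lower bound on $\mc D_{n-1}$. Also, the normalization $\mc V_n(v_j)=1$ is not available here, since the lemma fixes a given weakly convergent sequence. Two clean fixes: (i) use weak lower semicontinuity, $\liminf_j \mc D_{n-1}(v_j)\ge \mc D_{n-1}(u)>0$ since $u$ is non-constant, whence $\mc V_n(v_j)=\mc D_{n-1}(v_j)^{n/(n-1)}/(1+\mc E_{n-1}(v_j))$ is bounded below and $H_{v_j}$ is bounded; or (ii) bypass $H_{v_j}$ altogether as the paper does, estimating directly
\[
\|f_j\|_{L^1(\S)}\;\lesssim\; H_{v_j}\,\mc D_{n-1}(v_j)\;=\;(n-1)^{\frac{n-1}{2}}\,\frac{\mc D_{n-1}(v_j)^2}{\mc V_n(v_j)}\;=\;(n-1)^{\frac{n-1}{2}}\,\mc D_{n-1}(v_j)^{\frac{n-2}{n-1}}\bigl(1+\mc E_{n-1}(v_j)\bigr),
\]
which is bounded without any lower bound on $\mc V_n(v_j)$.
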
 
\begin{proof}
For $(u_j)_{j\in \N}$ as in the statement of the lemma, let $(v_j)_{j\in \N} \subset  W^{1,n-1}(\S;\R^n)$ be the sequence provided by Lemma \ref{Ekeland'slemma},  which satisfies \eqref{eq:min_seq_2}--\eqref{eq:Palais_Smale}. In particular, by \eqref{eq:u_jweak_convergence} also
\begin{equation}\label{eq:w_j_weakly_conv}  
v_j\rightharpoonup u \ \text{weakly in } W^{1,n-1}(\S;\R^n)\,,	
\end{equation}
and by \eqref{eq:vol_positive} and the fact that $\lim_{j\to \infty}|\mc V_n(u_j)-\mc V_n(v_j)|=0$, we also have that $\mc V_{n}(v_j)>0$ for all $j\in \N$ large enough.  We prove next that actually 
\begin{equation}\label{eq:w_j_strongly_conv}  
v_j\rightarrow u \ \text{strongly in } W^{1,n-1}(\S;\R^n)\,,
\end{equation}
which clearly yields \eqref{eq:u_j_strong_convergence} by \eqref{eq:dif_of_min_seq_negligible}.

To prove \eqref{eq:w_j_strongly_conv},  we first observe that $(v_j)_{j\in \N}$ satisfies the conditions for the application of Proposition \ref{suboptimal compactness}. Indeed, in view of \eqref{eq:Palais_Smale}, and since $\sup_{j\in \N}\|v_j\|_{W^{1,n-1}(\S)}<+\infty$ by \eqref{eq:w_j_weakly_conv}, setting
\begin{equation*}
 f_j:=H_{v_j}J(v_j)\,, 
\end{equation*}  
with $H_{v_j}$ as in \eqref{eq:EL_H_system}\,,
we only need to verify that 
\begin{equation}\label{eq:f_j_equibdd}
\sup_{j\in \N}\|f_j\|_{L^1(\S)}<+\infty\,.
\end{equation}
To see this,  by \eqref{eq:Hadamard} and \eqref{eq:deficit}, we estimate
\begin{align*}
\|f_j\|_{L^1(\S)}\leq H_{v_j}\fint_{\S} |J(v_j)|\lesssim  \frac{[\mc D_{n-1}(v_j)]^2}{\mc V_{n}(v_j)}= [\mc D_{n-1}(v_j)]^{\frac{n-2}{n-1}}\big(\mc \E_{n-1}(v_j)+1\big)\,,
\end{align*}
so that \eqref{eq:f_j_equibdd} follows from \eqref{eq:w_j_weakly_conv} and \eqref{eq:min_seq_2}. Applying Proposition \ref{suboptimal compactness} with $p:=n-1$ to the sequence $(v_j)_{j\in \N}$, we deduce that 
\begin{equation*}
v_j\rightarrow u \ \text{strongly in } W^{1,q}(\S;\R^n) \ \text{\ for all } 1\leq q<n-1\,,
\end{equation*}
and so, up to a subsequence, $\nabla_T v_j\rightarrow \nabla_T u$ pointwise $\haus$-a.e. on $\S$. Hence, by \eqref{eq:w_j_weakly_conv} and applying Lemmata \ref{lem:brezis_lieb} and \ref{volume_brezis_lieb} we deduce that
\begin{align}\label{eq:qual_expansions}
\begin{split}
 \mc D_{n-1}(u)& =\mc D_{n-1}(v_j)-\mc D_{n-1}(v_j-u)+o(1)\,,\\
\mc V_n(u)&=\mc V_n(v_j) - \mc V_n(v_j-u)+o(1)\,,
\end{split}
\end{align}
where $o(1)$ simply denotes a quantity which vanishes as $j\to \infty$.

The rest of the proof is identical to that of \cite[Lemma 2.1, Step 2]{Caldiroli2006} but, for the sake of completeness, we include the details here. Note that \eqref{eq:qual_expansions} and \eqref{eq:min_seq_2} imply that
\begin{align}\label{eq:chain_of_ineq}
\begin{split}
\mc D_{n-1}(u) & = \mc D_{n-1}(v_j)- \mc D_{n-1}(v_j-u)+o(1)\\
& =|\mc V_{n}(v_j)|^{\frac{n-1}{n}}- \mc D_{n-1}(v_j-u) +o(1)\\
&=|\mc V_n(u)+\mc V_n(v_j-u)|^{\frac{n-1}{n}}- \mc D_{n-1}(v_j-u)+o(1)\\
&\leq |\mc V_n(u)|^{\frac{n-1}{n}}+|\mc V_n(v_j-u)|^{\frac{n-1}{n}}- \mc D_{n-1}(v_j-u)+o(1)\\
&\leq \mc D_{n-1}(u)+o(1)\,,
\end{split}
\end{align}	
the inequality between the third and fourth line following from the algebraic inequality 
\begin{equation}\label{eq:algebraic_ineq}
|t+s |^{\frac{n-1}{n}}\leq 
|t|^{{\frac{n-1}{n}}}
+|s|^{\frac{n-1}{n}}\, \ \forall   t,s \in \R\,,
\end{equation}
while the last inequality follows simply from the nonnegativity of the deficit 
 applied to $u$ and $v_j-u$, cf.\ Proposition \ref{prop:isop}. Passing to a further non-relabeled subsequence for which
\[\lim_{j\to \infty}\mc V_n(v_j-u)=:\mc V\in \R\,,\]
and taking the limit superior as $j\to \infty$ in \eqref{eq:chain_of_ineq}, we infer that actually
\[|\mc V_n(u)+\mc V|^{\frac{n-1}{n}}=|\mc V_n(u)|^{\frac{n-1}{n}}+|\mc V|^{\frac{n-1}{n}}\implies \mc V_n(u)=0 \ \text{or } \mc V=0\,,\]
by the cases of equality in \eqref{eq:algebraic_ineq}.
If $\mc V_n(u)=0$, then instead of \eqref{eq:chain_of_ineq} we would simply have 
\begin{align*}
\mc D_{n-1}(u) &
 = \mc D_{n-1}(v_j)- \mc D_{n-1}(v_j-u)+o(1)\\
& =|\mc V_{n}(v_j)|^{\frac{n-1}{n}}- \mc D_{n-1}(v_j-u)+o(1)\\
&=|\mc V_n(v_j-u)|^{\frac{n-1}{n}}- \mc D_{n-1}(v_j-u)+o(1)\leq o(1)\,,
\end{align*}	
thus $\mc D_{n-1}(u)=0$, which is a contradiction to the fact that $u$ is non-constant. Hence, we must have $\mc V=0$. Starting with the lower semicontinuity of the conformal Dirichlet energy and then arguing as in the derivation of \eqref{eq:chain_of_ineq}, we get
\begin{align*}
\mc D_{n-1}(u)  \leq \liminf_{j\to \infty} \mc D_{n-1}(v_j)
& \leq  \limsup_{j\to \infty} \mc D_{n-1}(v_j)\\
& =\limsup_{j\to \infty} |\mc V_{n}(v_j)|^{\frac{n-1}{n}}\\
&=\limsup_{j\to \infty} |\mc V_n(u)+\mc V_n(v_j-u)|^{\frac{n-1}{n}}
=|\mc V_n(u)|^{\frac{n-1}{n}}
\leq \mc D_{n-1}(u)\,.
\end{align*}
Thus, $\lim_{j\to \infty}\mc D_{n-1}(v_j)=\mc D_{n-1}(u)$ which, together with \eqref{eq:w_j_weakly_conv}, implies \eqref{eq:w_j_strongly_conv}, 
 concluding the proof.
\end{proof}

We are finally ready to complete the proof of our compactness theorem which, after all our preparations, is a direct adaptation of the argument in Step 3 of the proof of \cite[Lemma 2.1]{Caldiroli2006}, now in general dimension $n\geq 3$.

\begin{proof}[Proof of Theorem \ref{thm:compactintro}]
 As the proof is somewhat long, we split it into several steps.

\medskip
\textit{Step 1: Construction of the normalizing maps $(\phi_j)_{j\in\N}$.} Let $(u_j)_{j\in \N}\subset W^{1,n-1}(\S;\R^n)$ be a minimizing sequence as in \eqref{eq:minim_seq} and \eqref{eq:vol_positive}, and let us set for brevity
\begin{equation*}
\mu_j:=\big[\mc V_n(u_j)\big]^{-1/n}>0\,.
\end{equation*}
By the scaling invariance of $\mc E_{n-1}$, the sequence $(\mu_j u_j)_{j\in \N}\subset W^{1,n-1}(\S;\R^n)$ is again energy minimizing, with 
\begin{equation}\label{normalization_condition}
\mc V_n(\mu_ju_j)=1\,, \ \text{hence } \lim_{j\to \infty} \mc D_{n-1}(\mu_ju_j)=1\,. 
\end{equation}
In what follows, for $t\in [-1,1]$ we denote by 
\[\mathbb{S}_{t, \pm}^{n-1}:=\Big\{x\in \S: x_n \gtrless t\Big\}\,,\]
omitting the dependence on $t$ when $t=0$.

The main purpose of this step is to construct maps $(\phi_j)_{j\in \N}\subset \Mob(\S)$ (recall \eqref{def: Mobius_group}), such that if we set
\begin{equation}\label{eq:rescaled_reparam_sequence}
\tilde u_j:=\frac{u_j  \circ \phi_j-\fint_{\mb S^{n-1}} u_j \circ \phi_j }{ \big[\mc V_n(u_j)\big]^{1/n}}\,, 
\end{equation}
then, up to a subsequence, we have
\begin{align}\label{eq:prop_of_w_j}
\begin{split}	
\rm{(i)}&\quad\fint_{\S}\tilde u_j=0\,, \quad \mc V_n(\tilde u_j)=1\,,\\[2pt]
\rm{(ii)}&\quad \lim_{j\to\infty} \mc D_{n-1}(\tilde u_j)=1\,,\\[2pt]
 \mathrm{(iii)}& \quad \frac{n \omega_n}{k+1}\leq 
 \int_{\mb S^{n-1}_{s,+}} \Big(\frac{|\n_T \tilde u_j|^2}{n-1}\Big)^{\frac{n-1}{2}}\,\mathrm{d}\H^{n-1}\leq \frac{n \omega_n k }{k+1} \quad   \text{for any }  
s\in [-r_0,0]\,, 
\end{split}
\end{align}
where both $k:=k(n)\in \N$ and $r_0>0$ will be specified later. 
Note that (i) and (ii) follow from the translation, scaling and conformal invariance of the deficit and \eqref{normalization_condition}, so in particular $(\tilde u_j)_{j\in \N}\subset W^{1,n-1}(\S;\R^n)$ is another minimizing sequence. In turn, property (iii) is not obvious and depends on the construction of the sequence $(\phi_j)_{j\in \N}$.  We also note that this last property will be crucial to ensure non-concentration of the energy in the next steps. 

To construct the sequence $(\phi_j)_{j\in\N}$, it is more convenient to work in $\R^{n-1}$, after stereographically projecting, so we consider the maps
\begin{equation*}
v_j :=  (\mu_ju_j)\circ \sigma_{e_n}^{-1} \colon \R^{n-1}\to \R^n\,.
\end{equation*}
We can find $k:=k(n)\in\N$ and points $\{p_1,\dots, p_k\}\subset \overline{\mb B^{n-1}}$ such that 
\begin{equation}
\label{eq:defUk}
\overline{\mb B^{n-1}}\subset \Omega_k := \bigcup_{\ell=1}^k B_1^{ n-1}(p_\ell)\,,
\end{equation}
where all balls are in $\R^{n-1}$ for this part of the argument. 
Note that $\sigma_{e_n}(\mb{S}^{n-1}_+)=\mb B^{n-1} \subset  \R^{n-1}$.  Since $\Omega_k$ in \eqref{eq:defUk} is an open cover,  there exists $\rho_0>1$ such that
$ B_{\rho_0}^{n-1}(0)  \subset \Omega_k$.  In particular, for  $r_0:=\frac{\rho_0^2-1}{\rho_0^2+1}>0$,  we have  
$$\mb S^{n-1}_{-r_0,+} = \sigma_{e_n}^{-1}\big( B_{\rho_0}^{n-1}(0)\big)\,.$$ 
We now choose points $(q_j)_{j\in \N} \subset \R^{n-1}$ and  radii $(\rho_j)_{j\in \N} \subset \R_+$   such that
\begin{equation}\label{eq:good_maximal_points}
\int_{B_{\rho_j}^{ n-1}(q_j)} \bigg(\frac{|\n v_j|^2}{n-1}\bigg)^{\frac{n-1}{2}} \d y=
\sup_{ q\in \R^{n-1}} \int_{B_{\rho_j}^{ n-1}( q)} \bigg(\frac{|\n v_j|^2}{n-1}\bigg)^{\frac{n-1}{2}} \d y = \frac{n\omega_n}{k+1}\,.
\end{equation}
The supremum in \eqref{eq:good_maximal_points} is attained for any fixed $\rho\geq 0$ in the place of $\rho_j$,
as the supremum of a nonnegative continuous function  in $\R^{n-1}$ tending to zero at infinity. 
Moreover, the value of this supremum grows continuously from $0$ at $\rho=0$, to $n\omega_n \mathcal D_{n-1}(\mu_j u_j)$ as $\rho\to\infty$,
so that a choice of $\rho_j$  satisfying the last equality in \eqref{eq:good_maximal_points} is possible thanks to \eqref{normalization_condition}.

Let now $T_j\colon \R^{n-1}\to \R^{n-1}$ be defined via \EEE $T_j(y):=\rho_j y + q_j$, \EEE and consider the new sequence
$$\tilde v_j := v_j \circ T_j\,. $$
By a  change of variables in \eqref{eq:good_maximal_points} this new sequence satisfies
\begin{equation}
\label{eq:choicevjtilde}
\int_{\mb B^{n-1} } \bigg(\frac{|\n \tilde v_j|^2}{n-1}\bigg)^{\frac{n-1}{2}} \d y=
\sup_{q\in \R^{n-1}} \int_{B_1^{ n-1 }(q)} \bigg(\frac{|\n \tilde v_j|^2}{n-1}\bigg)^{\frac{n-1}{2}} \d y = \frac{n\omega_n}{k+1}\,.
\end{equation}
Finally, for the definition in \eqref{eq:rescaled_reparam_sequence}, we take 
$$\phi_j:= \sigma_{e_n}^{-1} \circ T_j \circ \sigma_{e_n} \in \Mob_{ +}(\S)\,.$$ 
With this definition, the lower bound in \eqref{eq:prop_of_w_j}(iii) follows from the conformal invariance of the energy, \eqref{eq:choicevjtilde}, and the fact that $\mb B^{n-1}\subset \sigma_{e_n}(\mb S^{n-1}_{s,+})$  whenever  $s\leq 0$.  For the upper bound in \eqref{eq:prop_of_w_j}(iii), we argue similarly: since $s\in [-r_0,0]$,  using again conformal invariance,  the simple observation below \eqref{eq:defUk} and \eqref{eq:choicevjtilde}, we have
\begin{align*}
\int_{\mb S^{n-1}_{s,+}}\bigg(\frac{|\n_T \tilde u_j|^2}{n-1}\bigg)^{\frac{n-1}{2}}\,\mathrm{d}\H^{n-1} & \leq
\int_{\mb S^{n-1}_{-r_0,+}}\bigg(\frac{|\n_T \tilde u_j|^2}{n-1}\bigg)^{\frac{n-1}{2}}\,\mathrm{d}\H^{n-1} \\
& =\int_{ B_{\rho_0}^{n-1}(0)}\bigg(\frac{|\n \tilde v_j|^2}{n-1}\bigg)^{\frac{n-1}{2}}\,\mathrm{d}y  
\\
&
\leq \sum_{\ell=1}^k \int_{ B_1^{n-1}(p_\ell)}\bigg(\frac{|\n \tilde v_j|^2}{n-1}\bigg)^{\frac{n-1}{2}}\,\mathrm{d}y\leq \frac{n\omega_nk}{k+1}\,,
\end{align*}
as claimed.\EEE

To conclude this step we note that, by \eqref{eq:prop_of_w_j}(i),(ii) and Poincar{\' e's} inequality, we have $\sup_{j\in \N}\|\tilde u_j\|_{W^{1,n-1}(\S)}<+\infty$. Hence, up to a non-relabeled subsequence, 
\begin{equation}\label{eq: w_j_weakly_converges}
\tilde u_j	\rightharpoonup u \ \text{weakly in } W^{1,n-1}(\S;\R^n)\,.
\end{equation}
We claim that $u\in W^{1,n-1}(\S;\R^n)$ is non-constant.  Note that, if $u$ is constant, then by \eqref{eq:prop_of_w_j}(i) and the fact that $\tilde u_j\to u$ strongly in $L^{n-1}(\S;\R^n)$, we must have 
\begin{align}\label{eq: u_is_0}
u=\fint_{\S}u=\lim_{j\to \infty}\fint_{\S}\tilde u_j=0\,.
\end{align}
We now assume for the sake of contradiction that \eqref{eq: u_is_0} is the case.

\medskip
\textit{Step 2: Construction of $(n-2)$-spheres where no energy concentration occurs.} 
We now construct a particular sequence $(s_j)_{j\in \N} \subset [-r_0,0]$  such that
\begin{equation}\label{eq:fractional_norm_to_0}
\lim_{j\to\infty} \|\tilde u_j\|_{W^{1-\frac{1}{n-1},n-1}(\partial \mathbb{S}^{n-1}_{s_j,+})}=0\,.	
\end{equation}	
By the general slicing properties of Sobolev functions, for $\L^1$-a.e.\ $s\in (-r_0,0)$ we have that 
\begin{equation}\label{eq:sobolev_slices}
\tilde u_j|_{\mathbb{S}^{n-2}_{s}}\in W^{1,n-1}(\mathbb{S}^{n-2}_{s};\R^n)\,, \ \text{where } \mathbb{S}^{n-2}_s:=\partial \mathbb{S}^{n-1}_{s,+}\,,
\end{equation}
the latter intended of course in the sense of relative boundary with respect to $\S$. 
Moreover, denoting by $\nabla_{\mathbb{S}_s^{n-2}}$ the tangential gradient on the intermediate $(n-2)$-sphere $\mathbb{S}_s^{n-2}$, we define the sets 
\begin{align}\label{eq:aux_sets_I_1_2}
\begin{split}
I_{j,1}&:=\bigg\{s\in(-r_0,0)\colon \eqref{eq:sobolev_slices} \ \text{holds, } \int_{\mathbb{S}_s^{n-2}}\big|\nabla_{\mathbb{S}^{n-2}_s}\tilde u_j\big|^{n-1}\leq \frac{3}{r_0}\int_{\S}\big|\nabla_{T}\tilde u_j\big|^{n-1}\bigg\}\,,\\
I_{j,2}&:=\bigg\{s\in(-r_0,0)\colon \eqref{eq:sobolev_slices} \ \text{holds, } \int_{\mathbb{S}_s^{n-2}}|\tilde u_j|^{n-1}\leq \frac{3}{r_0}\int_{\S}|\tilde u_j|^{n-1}\bigg\}\,.
\end{split}
\end{align}
By Fubini's theorem and since $|\nabla_{T}\tilde u_j| \geq  |\nabla_{\mathbb{S}^{n-2}_s}\tilde u_j|$ $\mathcal{H}^{n-2}$-a.e. on $\mathbb{S}^{n-2}_s$, we obtain
\begin{align*}
\int_{\S}|\nabla_T\tilde u_j|^{n-1}&\geq \int_{-r_0}^{0}\bigg(\int_{\mathbb{S}_s^{n-2}}|\nabla_{T}\tilde u_j|^{n-1}\bigg)\,\mathrm{d}s\geq \int_{(-r_0,0)\setminus I_{j,1}}\bigg(\int_{\mathbb{S}_s^{n-2}}|\nabla_{\mathbb{S}^{n-2}_s}\tilde u_j|^{n-1}\bigg)\,\mathrm{d}s\\
&>\frac{3}{r_0}\L^1((-r_0,0)\setminus I_{j,1}) \int_{\S}|\nabla_T\tilde u_j|^{n-1}\,,
\end{align*}
hence $\L^1(I_{j,1})>\frac{2r_0}{3}$. A similar argument shows that $\L^1(I_{j,2})>\frac{2r_0}{3}$. Thus, 
\begin{align*}
\L^1(I_{j,1}\cap I_{j,2})= \L^1(I_{j,1})+\L^1(I_{j,2})-\L^1(I_{j,1}\cup I_{j,2})\geq \frac{2r_0}{3}+\frac{2r_0}{3}-r_0=\frac{r_0}{3}\,.
\end{align*} 
In particular, we can pick $s_j\in I_{j,1}\cap I_{j,2}$, and using the Gagliardo--Nirenberg--Sobolev inequality in fractional Sobolev spaces \cite[Theorem 1]{BrezisMironescu2018} and \eqref{eq:aux_sets_I_1_2}, we deduce that
\begin{align}\label{gagliardo_nirenberg}
\begin{split}	
\|\tilde u_j\|_{W^{1-\frac{1}{n-1},n-1}(\mathbb{S}_{s_j}^{n-2})}&\lesssim_{r_0} 	\|\tilde u_j\|_{W^{1,n-1}(\mathbb{S}_{s_j}^{n-2})}^{\frac{n-2}{n-1}}	  \|\tilde u_j\|_{L^{n-1}(\mathbb{S}_{s_j}^{n-2})}^{\frac{1}{n-1}}\\
&\lesssim_{r_0} \|\tilde u_j\|_{W^{1,n-1}(\S)}^{\frac{n-2}{n-1}}	  \|\tilde u_j\|_{L^{n-1}(\S)}^{\frac{1}{n-1}}\,,
\end{split}
\end{align}
so that \eqref{eq:fractional_norm_to_0} follows from \eqref{eq: w_j_weakly_converges},  \eqref{eq: u_is_0} and \eqref{gagliardo_nirenberg}.

\medskip
\textit{Step 3: Construction of extensions of the traces on the good $(n-2)$-spheres.}
For every $j\in \N$, let $\zeta_j\in W^{1,n-1}(\mathbb{S}_{s_j,+}^{n-1};\R^n)$ be an extension of $\tilde u_j|_{\partial \mathbb{S}^{n-1}_{s_j,+}}$ satisfying
\[\|\zeta_j\|_{W^{1,n-1}(\mathbb{S}_{s_j,+}^{n-1})}\lesssim 
\|\tilde u_j\|_{W^{1-\frac{1}{n-1},n-1}(\partial \mathbb{S}^{n-1}_{s_j,+})}\,,\]
similarly to  \eqref{eq:Sobolev_bound_for_U}. In particular,  \eqref{eq:fractional_norm_to_0} yields
\begin{equation}\label{eq: n_harmonic_tends_to_0}
\lim_{j\to \infty} \|\zeta_j\|_{W^{1,n-1}(\mathbb{S}_{s_j,+}^{n-1})}=0\,. 
\end{equation}

Let $\psi_j\in \Mob_{-}(\S)$ be the M\"obius transformation that maps conformally $\mathbb{S}_{s_j,+}^{n-1}$ onto $\mathbb{S}_{s_j,-}^{n-1}$, keeping $\partial \mathbb{S}_{s_j,+}^{n-1}$ invariant, while reversing the orientation. For every $j\in \N$ we now define
\begin{equation*}
u_{1,j}(x):=\begin{cases}
\tilde u_j(x)\,  \ \ \qquad \text{if } x\in \mathbb{S}_{s_j,+}^{n-1}\,,\\[4pt]
\zeta_j(\psi_j^{-1}(x))\, \ \text{if } x\in \mathbb{S}_{s_j,-}^{n-1}\,,
\end{cases}
\text{and}
\quad u_{2,j}(x):=\begin{cases}
\tilde u_{j}(\psi_j(x))\,  \ \quad \text{if } x\in \mathbb{S}_{s_j,+}^{n-1}\,,\\[4pt]
\zeta_j(\psi_j^{-1}(x))\, \ \  \ \text{if } x\in \mathbb{S}_{s_j,-}^{n-1}\,.
\end{cases}
\end{equation*}
By the invariance of the $(n-1)$-Dirichlet energy under conformal reparametrizations, we get
\begin{align}\label{eq:split_of_Dir}
\begin{split}
\mc D_{n-1}(u_{1,j})&=\frac{1}{n\omega_n}\bigg(\int_{\mathbb{S}_{s_j,+}^{n-1}}\bigg(\frac{|\nabla_T \tilde u_j|^2}{n-1}\bigg)^{\frac{n-1}{2}}+\int_{\mathbb{S}_{s_j,+}^{n-1}}\bigg(\frac{|\nabla_T \zeta_j|^2}{n-1}\bigg)^{\frac{n-1}{2}}\bigg)\,,\\
\mc  D_{n-1}(u_{2,j})&=\frac{1}{n\omega_n}\bigg(\int_{\mathbb{S}_{s_j,-}^{n-1}}\bigg(\frac{|\nabla_T \tilde u_j|^2}{n-1}\bigg)^{\frac{n-1}{2}}+\int_{\mathbb{S}_{s_j,+}^{n-1}}\bigg(\frac{|\nabla_T \zeta_j|^2}{n-1}\bigg)^{\frac{n-1}{2}}\bigg)\,,
\end{split}
\end{align}
hence
\begin{equation}\label{eq:sum_of_energies}
\mc D_{n-1}(u_{1,j})+\mc D_{n-1}(u_{2,j})=\mc {D}_{n-1}(\tilde  u_j)+\frac{2}{n\omega_n}\int_{\mathbb{S}_{s_j,+}^{n-1}}\bigg(\frac{|\nabla_T \zeta_j|^2}{n-1}\bigg)^{\frac{n-1}{2}}\,.
\end{equation}
For the volume term, recalling the notation in \eqref{eq:cofeqwedge}, using the change of variables formula and the fact that $\psi_j$ reverses the orientation, we have
\begin{align*}
\mc V_{n}(u_{1,j})&=\frac{1}{n\omega_n}\bigg(\int_{\mathbb{S}_{s_j,+}^{n-1}}\langle \tilde u_j,J(\tilde u_j)\rangle-\int_{\mathbb{S}_{s_j,+}^{n-1}}\langle \zeta_j,J(\zeta _j)\rangle\bigg)\,,\\[2pt]
\mc V_{n}(u_{2,j})&=\frac{1}{n\omega_n}\bigg(-\int_{\mathbb{S}_{s_j,-}^{n-1}}\langle \tilde u_j,J(\tilde u_j)\rangle-\int_{\mathbb{S}_{s_j,+}^{n-1}}\langle \zeta_j,J(\zeta _j)\rangle\bigg)\,,
\end{align*}
hence 
\begin{equation}\label{eq:dif_of_volumes}
\EEE\mc V_{n}(u_{1,j})-\mc V_{n}(u_{2,j})=\mc V_{n}(\tilde u_{j})\,.\\[4pt]
\end{equation} 
\medskip
\textit{Step 4: The limit $u$ is non-constant.} We now proceed similarly to the proof of Lemma \ref{weak_to_strong}.  First note that, since $(\tilde u_j)_{j\in \N}\subset W^{1,n-1}(\S;\R^n)$ is a minimizing sequence, combining \eqref{eq:sum_of_energies}, \eqref{eq: n_harmonic_tends_to_0},  \eqref{eq:dif_of_volumes}, \eqref{eq:algebraic_ineq}, and the non-negativity of $\mc E_{n-1}$, we have 
\begin{align}\label{eq:convexity_argument}
\begin{split}	
\mc D_{n-1}(u_{1,j})+\mc D_{n-1}(u_{2,j})&=\mc D_{n-1}(\tilde 
u_{j})+o(1)\\
& =|\mc V_n(\tilde  
u_{j})|^{\frac{n-1}{n}}+o(1)\\
&=|\mc V_{n}(u_{1,j})-\mc V_{n}(u_{2,j})|^{\frac{n-1}{n}}+o(1)\\
&\leq|\mc V_{n}(u_{1,j})|^{\frac{n-1}{n}}+|\mc V_{n}(u_{2,j})|^{\frac{n-1}{n}}+o(1)\\
&\leq \mc D_{n-1}(u_{1,j})+\mc D_{n-1}(u_{2,j})+o(1)\,.
\end{split}
\end{align}
Passing to further subsequences if necessary, we can assume that the following limits exist:
\begin{align*}
\begin{split}	
&\lim_{j\to \infty} \mc D_{n-1}(u_{1,j})=:D_1\in \R_+\,, \quad \lim_{j\to \infty} \mc D_{n-1}(u_{2,j})=:D_2\in \R_+\,,\\
&\lim_{j\to \infty} \mc V_{n}(u_{1,j})=:V_1\in \R\,, \qquad\quad \lim_{j\to \infty} \mc V_{n}(u_{2,j})=:V_2\in \R\,,
\end{split}
\end{align*} 
and taking the limit as $j\to \infty$ in \eqref{eq:convexity_argument}, we infer in particular that
\begin{equation*}
|V_{1}-V_{2}|^{\frac{n-1}{n}}=|V_{1}|^{\frac{n-1}{n}}+|V_{2}|^{\frac{n-1}{n}}\implies V_1=0 \text{ or } V_2=0\,.
\end{equation*}
Both cases lead to a contradiction. Indeed, if $\lim_{j\to \infty}\mc V_{n}(u_{1,j})=V_1=0$, \eqref{eq:convexity_argument} actually gives
\begin{align*}
\begin{split}	
\mc D_{n-1}(u_{1,j})+\mc D_{n-1}(u_{2,j})&\leq|\mc V_{n}(u_{1,j})|^{\frac{n-1}{n}}+|\mc V_{n}(u_{2,j})|^{\frac{n-1}{n}}+o(1)\\
&=|\mc V_{n}(u_{2,j})|^{\frac{n-1}{n}}+o(1)\\
& \leq \mc D_{n-1}(u_{2,j})+o(1)\,,
\end{split}
\end{align*} 
so that passing to the limit $j\to \infty$, we get $D_1=0$. This, 
combined with \eqref{eq: n_harmonic_tends_to_0} and \eqref{eq:split_of_Dir},  gives
\begin{equation*}
\lim_{j\to \infty}\int_{\mathbb{S}^{n-1}_{ s_j,+}}\bigg(\frac{|\nabla_T \tilde u_j|^2}{n-1}\bigg)^{\frac{n-1}{2}}\,\d\haus=0\,,
\end{equation*}
contradicting the first inequality in \eqref{eq:prop_of_w_j}(iii). 
In the second case, if  $\lim_{j\to \infty}\mc V_{n}(u_{2,j})=V_2=0$, by the very same argument, we get 
\begin{equation*}
\lim_{j\to \infty}\int_{\mathbb{S}^{n-1}_{ s_j,-}}\bigg(\frac{|\nabla_T  \tilde u_j|^2}{n-1}\bigg)^{\frac{n-1}{2}}\,\d\haus=0 \iff \lim_{j\to \infty}\int_{\mathbb{S}^{n-1}_{s_j,+}}\bigg(\frac{|\nabla_T  \tilde u_j|^2}{n-1}\bigg)^{\frac{n-1}{2}}\,\d\haus=n\omega_n\,,
\end{equation*}
by \eqref{eq:prop_of_w_j}(ii), which contradicts the second inequality in  \eqref{eq:prop_of_w_j}(iii). 
Thus, in either case we obtain a contradiction, and this shows that the limiting $u$ in \eqref{eq: w_j_weakly_converges} is non-constant, as claimed.

\medskip
\textit{Step 5: Conclusion.} By the previous step, the limit $u$ is non-constant. Hence,  applying Lemma \ref{weak_to_strong}, we deduce that actually 
\begin{equation}\label{eq: w_j_strongly_converges}
\tilde u_j	\rightarrow u \ \text{strongly in } W^{1,n-1}(\S;\R^n)\,,
\end{equation} 
and $u$ is a minimizer for $\mc E_{n-1}$ with 
\begin{equation}\label{eq:normalization_for_u}
\mc D_{n-1}(u)=1\,,\quad \mc V_n(u)=1\,,\quad \fint_{\S}u=0\,,
\end{equation}
as a consequence of \eqref{eq:prop_of_w_j}(i),(ii) and \eqref{eq: w_j_strongly_converges}. In particular, by \eqref{eq:normalization_for_u} and Proposition \ref{prop:isop} we have that $u\in  \Mob_+(\mb S^{n-1})$, see also Remark \ref{rmk:vol=deg}. The zero-average condition in  \eqref{eq:normalization_for_u} implies by a standard argument that in fact
\begin{equation}\label{eq:lim_is_rotation}
u=R\id_{\S} \ \text{for some } R\in \SO(n)\,,
\end{equation}
cf.\ \cite[Lemma A.3]{zemas2022rigidity} or \cite[Lemma 2.4]{GuerraLamyZemas2023}. 
Indeed, using \eqref{eq:normalization_for_u}, Jensen's inequality,  and the sharp Poincar\'e inequality on $\S$ for functions with zero average, cf.\ \cite[(2.7)]{GuerraLamyZemas2023}, we have
\begin{align*}
1 &=\fint_{\S}\bigg(\frac{|\nabla_T u|^2}{n-1}\bigg)^{\frac{n-1}{2}} \geq \bigg(\fint_{\S}\frac{|\nabla_T u|^2}{n-1}\bigg)^{\frac{n-1}{2}} 
\geq \Big(\fint_{\S} |u|^2 \Big)^{\frac{n-1}{2}} = 1\,,
\end{align*}
the last equality following from the fact that $u$ is $\S$-valued. Hence equality holds throughout in the above chain of inequalities and, in particular, $u$ satisfies the sharp Poincar\'e inequality on $\S$ with equality, so it must be linear, \textit{i.e.}, $u(x)=Rx$ for some $R\in \R^{n\times n}$. Finally,  since $u \in W^{1,n-1}(\mb S^{n-1}; \mb S^{n-1})$ has degree one, it must be that $R\in\SO(n)$. 

Combining now \eqref{eq:rescaled_reparam_sequence}, \eqref{eq: w_j_strongly_converges} and \eqref{eq:lim_is_rotation}, we conclude the proof of the theorem.  \qedhere
\end{proof}

\section{Quantitative stability}\label{sec:quantitative_stability}
\subsection{Reduction to the local stability problem}
\label{sec: quant_stability_local}

The purpose of this section is to prove the quantitative Theorem \ref{thm:qttiveintro}. With the qualitative analogue at hand, \textit{i.e.},  Theorem \ref{thm:compactintro},  a standard \textit{contradiction$\slash$compactness argument} shows that it suffices to prove the \textit{$W^{1,n-1}(\S;\R^n)$-local} version of the desired estimate, as claimed in the next lemma. 

\begin{lemma}\label{from_local_to_global}
It suffices to prove the $W^{1,n-1}(\S;\R^n)$-local version of Theorem \ref{thm:qttiveintro}, \textit{i.e}, it suffices to prove it for maps  in the class 
\begin{equation}\label{eq:local_family}
\mathcal{B}_{\delta_n,\varepsilon_n}
:=\left\{ u 
\in W^{1,n-1}(\S;\R^n)\colon
\begin{array}{lr}
{\rm(i)}\ \ \ \fint_{\S}  u 
=0\\[7pt]
{\rm(ii)}\ \ \fint_{\S}|\t  u 
-P_T|^{n-1}\leq \delta_n \\[7pt]
{\rm(iii)}\ \ \mc E_{n-1}( u 
)< \varepsilon_n
\end{array}\right\},
\end{equation}
for some $\delta_n, \varepsilon_n\in (0,1)$ sufficiently small constants.
\end{lemma}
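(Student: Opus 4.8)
The plan is a standard \emph{compactness--contradiction} argument: Theorem~\ref{thm:compactintro} is exactly the input that lets one reduce an arbitrary competitor to one lying in the local class $\mathcal B_{\delta_n,\varepsilon_n}$. So suppose the assertion of Theorem~\ref{thm:qttiveintro} (the local version, say with constant $C_n$) is known, but that the global statement fails. Then for every $j\in\N$ there is $u_j\in W^{1,n-1}(\S;\R^n)$ with
\[
\inf_{\phi\in\Mob(\S)}\fint_{\S}\Bigl|\tfrac{\nabla_T u_j}{|\mc V_n(u_j)|^{1/n}}-\nabla_T\phi\Bigr|^{n-1}\,\dH^{n-1}\;>\;j\,\mc E_{n-1}(u_j).
\]
The right-hand side is finite only if $\mc V_n(u_j)\neq0$, so this forces $\mc V_n(u_j)\neq0$. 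First I would normalize: using that the deficit $\mc E_{n-1}$, the infimum functional above, and $|\mc V_n(\cdot)|$ are all invariant under dilations, translations, orthogonal transformations and the flip in $\R^n$, I may assume $\mc V_n(u_j)=1$ and $\fint_\S u_j=0$, so that the offending inequality reads $\inf_\phi\fint_\S|\nabla_T u_j-\nabla_T\phi|^{n-1}>j\,\mc E_{n-1}(u_j)$.

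The second step is to show $\mc E_{n-1}(u_j)\to0$. If not, pass to a subsequence with $\mc E_{n-1}(u_{j_k})\ge c>0$. From \eqref{eq:deficit} and $\mc V_n(u_{j_k})=1$ one gets $\mc D_{n-1}(u_{j_k})\le 1+\mc E_{n-1}(u_{j_k})$, hence $\fint_\S|\nabla_T u_{j_k}|^{n-1}\lesssim 1+\mc E_{n-1}(u_{j_k})$; testing the infimum with $\phi=\mathrm{id}_\S$ and using $|P_T|^2=n-1$ gives $\inf_\phi\fint_\S|\nabla_T u_{j_k}-\nabla_T\phi|^{n-1}\lesssim 1+\mc E_{n-1}(u_{j_k})\lesssim_{c}\mc E_{n-1}(u_{j_k})$, which contradicts $j_k\to\infty$ once combined with the displayed strict inequality. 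So $\mc E_{n-1}(u_j)\to0$, and Theorem~\ref{thm:compactintro} applies: along a subsequence there are $\phi_j\in\Mob(\S)$ and $O\in\tp O(n)$ with $u_j\circ\phi_j-\fint_\S u_j\circ\phi_j\to O\,\mathrm{id}_\S$ in $W^{1,n-1}(\S;\R^n)$ (the normalizing factor $|\mc V_n(u_j)|^{1/n}$ being $1$). Setting $\tilde u_j:=O^{-1}\bigl(u_j\circ\phi_j-\fint_\S u_j\circ\phi_j\bigr)$, one checks directly that $\fint_\S\tilde u_j=0$, that $\fint_\S|\nabla_T\tilde u_j-P_T|^{n-1}=\fint_\S|\nabla_T(\tilde u_j-\mathrm{id}_\S)|^{n-1}\to0$, and that $\mc E_{n-1}(\tilde u_j)=\mc E_{n-1}(u_j)\to0$ by invariance of the deficit; hence $\tilde u_j\in\mathcal B_{\delta_n,\varepsilon_n}$ for all large $j$.

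Finally I would invoke the local estimate on $\tilde u_j$. Since $|\mc V_n(\tilde u_j)|=|\mc V_n(u_j)|=1$, it reads $\inf_\phi\fint_\S|\nabla_T\tilde u_j-\nabla_T\phi|^{n-1}\le C_n\,\mc E_{n-1}(\tilde u_j)=C_n\,\mc E_{n-1}(u_j)$, and it remains to transport this back to $u_j$. Here the only point needing more than bookkeeping is the \emph{conformal invariance of the infimum functional}: because $\phi_j$ is a conformal diffeomorphism of $\S$, the chain rule together with the Jacobian cancellation that also underlies conformal invariance of the $(n-1)$-Dirichlet energy gives $\fint_\S|\nabla_T(u\circ\phi_j)-\nabla_T(\psi\circ\phi_j)|^{n-1}=\fint_\S|\nabla_T u-\nabla_T\psi|^{n-1}$ for every $\psi\in\Mob(\S)$, while left-composition with $O\in\tp O(n)$ and translation by a constant leave $|\nabla_T(\cdot)-\nabla_T\phi|$ unchanged after relabelling $\phi$; since $\Mob(\S)$ is a group, this yields $\inf_\phi\fint_\S|\nabla_T\tilde u_j-\nabla_T\phi|^{n-1}=\inf_\phi\fint_\S|\nabla_T u_j-\nabla_T\phi|^{n-1}$. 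Combining, $\inf_\phi\fint_\S|\nabla_T u_j-\nabla_T\phi|^{n-1}\le C_n\,\mc E_{n-1}(u_j)$, which contradicts the choice of $u_j$ for any $j>\max\{j_0,C_n\}$ (the degenerate case $\mc E_{n-1}(u_j)=0$ being immediate, since the left-hand side is then forced to vanish). This establishes the reduction. There is no genuine obstacle in this lemma: the substantive ingredient is Theorem~\ref{thm:compactintro}, and the rest is the careful handling of the several layers of normalization and of the invariance just discussed.
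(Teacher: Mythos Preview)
Your proof is correct and follows essentially the same contradiction/compactness scheme as the paper's own argument: the trivial bound on the left-hand side forces $\mc E_{n-1}(u_j)\to 0$, Theorem~\ref{thm:compactintro} then produces the normalizing M\"obius maps, and the local estimate applied to the resulting sequence yields the contradiction after transporting back via conformal invariance. The only cosmetic differences are that you normalize $\mc V_n(u_j)=1$ up front (the paper carries the factor $|\mc V_n(u_j)|^{1/n}$ through) and that you phrase the transport-back step as invariance of the \emph{infimum} functional, whereas the paper exhibits an explicit competitor $\psi_j\circ\phi_j^{-1}$; both are equivalent since $\Mob(\S)$ is a group.
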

\begin{proof}
That without loss of generality we can assume $\rm (i)$ is obvious, since the deficit $\mc E_{n-1}$ is translation-invariant. That we can assume property $\rm (iii)$ is also immediate, because for every $u\in W^{1,n-1}(\S;\R^n)$ with $\mc V_n(u)\neq 0$ and $\mc E_{n-1}(u)\geq \varepsilon_n$, and for every $\phi\in \Mob(\mb S^{n-1})$, by the conformal invariance of the $(n-1)$-Dirichlet energy, we trivially obtain
\begin{align}\label{eq:trivial_deficit_bound}
\begin{split}
\fint_{\mb S^{n-1}} \bigg| \frac{\n_T u}{ |\mc V_n(u)|^{1/n}} - \n_T \phi\bigg|^{n-1}&\leq 2^{n-2}\bigg(|\mc V_n(u)|^{-\frac{n-1}{n}}\fint_{\mb S^{n-1}} |\n_T u|^{n-1}+\fint_{\S}|\n_T \phi|^{n-1}\bigg)\\[2pt]
&=2^{n-2}(n-1)^{\frac{n-1}{2}}\bigg[\bigg(\frac{[\mc D_{n-1}(u)]^{\frac{n}{n-1}}}{|\mc V_n(u)|}\bigg)^{\frac{n-1}{n}}+1\bigg]\\[2pt]
&=c_n \bigg[\bigg(1+\mc E_{n-1}(u)\bigg)^{\frac{n-1}{n}}+1\bigg]\\[2pt]
&\leq c_n \bigg(2+\frac{n-1}{n}\mc E_{n-1}(u)\bigg)
\leq c_n \bigg(\frac{2}{\e_n}+\frac{n-1}{n}\bigg)
\mc E_{n-1}(u)
\,,
\end{split}
\end{align} 
where $c_n:=2^{n-2}(n-1)^{\frac{n-1}{2}}$, and in the last line we used the algebraic inequality $(1+t)^{\gamma}\leq 1+\gamma t$, valid for all $t \geq  0$ and $\gamma\leq 1$.
Therefore, it suffices to prove Theorem~\ref{thm:qttiveintro} in the $\varepsilon_n$-small deficit regime. 
	
Suppose now that we have proven Theorem \ref{thm:qttiveintro} for maps in $\mathcal{B}_{\delta_n,\varepsilon_n}$, but for the sake of contradiction that the theorem fails to hold globally. Then, for every $j\in \mathbb{N}$ there exists  $u_j\in W^{1,n-1}(\S;\R^n)$ with $\mc V_n(u_j)\neq 0$ such that for every $\phi\in \Mob(\mb S^{n-1})$, 
\begin{equation}\label{eq:contr_comp_1}
\fint_{\mb S^{n-1}} \bigg| \frac{\n_T u_j}{ |\mc V_n(u_j)|^{1/n}} - \n_T \phi\bigg|^{n-1} \mathrm{d}\haus \geq j \mc E_{n-1}(u_j)\,.
\end{equation}
Combining \eqref{eq:trivial_deficit_bound} and \eqref{eq:contr_comp_1}, we immediately see that for $j\in \N$ large enough,
\[0\leq \mc E_{n-1}(u_j)\leq \frac{2c_n}{j-c_n \frac{n-1}{n}}\to 0\ \ \text{as } j\to \infty\,.\]
We can now use the compactness result from Theorem \ref{thm:compactintro} to obtain a contradiction: by its application, we can find $(\phi_j)_{j\in \N}\subset \Mob(\mb S^{n-1})$ and $O\in \tp O(n)$ so that, up to a non-relabelled subsequence,
\begin{equation*}
v_j:=\frac{u_j  \circ \phi_j-\fint_{\mb S^{n-1}} u_j \circ \phi_j }{ |\mc V_n(u_j)|^{1/n}}\rightarrow O\id_{\S} \ \text{strongly in } W^{1,n-1}(\S;\R^n)\,.
\end{equation*}
Without loss of generality (by considering $O^{-1} v_j$ instead of $v_j$) we can also suppose that $O=I_n$. Then, for the constants $\delta_n, \varepsilon_n\in (0,1)$ in the statement, we can find $j_0:=j_0(\delta_n,\varepsilon_n)\in \mathbb{N}$ such that the sequence $(v_j)_{j\geq j_0}\subset \mathcal{B}_{\delta_n,\varepsilon_n}$. Hence, by the assumption that \eqref{eq:qttive} is valid in this class, we deduce that there exists $(\psi_j)_{j\geq j_0}\subset \Mob(\mb S^{n-1})$ such that
\begin{align*}
\fint_{\S}\bigg|\frac{\n_T u_j}{|\mc V_n(u_j)|^{1/n}} - \n_T (\psi_j\circ\phi^{-1}_j)\bigg|^{n-1}&=\fint_{\S} \big|\n_T v_j - \n_T \psi_j\big|^{n-1}\lesssim  \mc E_{n-1}(v_j)= \mc E_{n-1}(u_j) \,,
\end{align*}
for all $j\geq j_0$, which clearly contradicts \eqref{eq:contr_comp_1}.
\end{proof}

Ultimately, the nonlinear stability estimate \eqref{eq:qttive} is a consequence of its linear analogue quoted in Theorem \ref{thm:linstab}. In order to apply the latter result, we need to 
use the invariance of the deficit under the action of $\Mob(\mb S^{n-1})$ to fix some of the degrees of freedom in the problem.  This is the content of the next lemma, whose proof can be obtained by following verbatim the steps of proof of \cite[Lemma 4.13]{zemas2022rigidity}, just adapted to general dimension $n\geq 3$\,.
See also  \cite[Lemma~4.1.12]{ZemasPhD}, where $\nabla_T(u\circ\psi)$ is actually shown to be close to $P_T$ in $L^2$ 
-- instead of $L^{n-1}$ as in  (ii) of \eqref{eq:local_family} --
but since Lemma~\ref{fixingMobius} perturbs $u$ along the finite-dimensional manifold $\Mob_+(\mb S^{n-1})$, the choice of norm is immaterial.

\begin{lemma}\label{fixingMobius}
Given $\delta_n, \varepsilon_n \in (0,1)$ sufficiently small, there exists $\tilde \delta_n \in (0,1)$ that depends only on $\delta_n$ and such that $\tilde \delta_n\to 0$ as $\delta_n\to 0$, so that for every $u\in \mathcal{B}_{\delta_n,\varepsilon_n}$ there exists $\psi\in \Mob_+(\mb S^{n-1})$ such that
\begin{equation}\label{eq:ucompphi_zero_proj}
\left(u\circ\psi-\fint_{\S} u\circ\psi\right) \in \mathcal{B}_{\tilde\delta_n,\varepsilon_n} \ \ \text{with } \ \ \Pi_{n,0}(u\circ\psi)=0\,,
\end{equation} 	
where $\Pi_{n,0}$ is 
the projection onto the kernel $H_{n,0}$ of $Q_n:=\mathcal E_{n-1}''(\id_\S)$
as in Theorem \ref{thm:linstab}\,.
\end{lemma}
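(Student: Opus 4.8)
The plan is a quantitative gauge-fixing argument, following \cite[Lemma~4.13]{zemas2022rigidity}: I solve the $N:=\dim\mob(\S)=n(n+1)/2$ scalar equations encoded in $\Pi_{n,0}(u\circ\psi)=0$ by an implicit function theorem on the finite-dimensional manifold $\Mob_+(\S)$, the point being that all of the $\psi$-dependence can be transferred onto smooth test fields, so that the merely $W^{1,n-1}$-regularity of $u$ never enters a derivative. First I would fix a $W^{1,2}$-orthonormal basis $X_1,\dots,X_N$ of $H_{n,0}$; each $X_k$ is smooth, being (up to its mean) an infinitesimal Möbius field $X_{S,\xi,\mu}$ as in \eqref{Lie_algebra}. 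I would also fix a smooth parametrization $a\mapsto\psi_a\in\Mob_+(\S)$ of a neighbourhood of $\mathrm{id}_{\S}$ with $\psi_0=\mathrm{id}_{\S}$, whose velocity fields $Y_i:=\partial_{a_i}\psi_a|_{a=0}$ span $\mob(\S)$. For $u\in\mathcal{B}_{\delta_n,\varepsilon_n}$ the field $w:=u-\mathrm{id}_{\S}$ has zero mean by (i) of \eqref{eq:local_family} and $\fint_{\S}|\nabla_T w|^{n-1}\le\delta_n$ by (ii), so the Poincaré inequality gives $\|w\|_{W^{1,n-1}(\S)}\lesssim\delta_n^{1/(n-1)}=:\eta_n$, hence also $\|w\|_{W^{1,2}(\S)}\lesssim\eta_n$ since $n-1\ge 2$.

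The key step is the following reformulation: integrating by parts on $\S$ and changing variables $y=\psi_a(x)$, one has, for every $v\in W^{1,2}(\S;\R^n)$,
\begin{equation*}
\langle v\circ\psi_a,\,X_k\rangle_{W^{1,2}(\S)}
=\fint_{\S}\langle v\circ\psi_a,\,(\mathrm{Id}-\Delta_{\S})X_k\rangle\,\mathrm d\haus
=\fint_{\S}\langle v,\,g_k^a\rangle\,\mathrm d\haus,
\end{equation*}
where $g_k^a:=\bigl((\mathrm{Id}-\Delta_{\S})X_k\bigr)\circ\psi_a^{-1}\,\bigl|\det D(\psi_a^{-1})\bigr|$ depends smoothly on $(x,a)$ and $g_k^0=(\mathrm{Id}-\Delta_{\S})X_k$. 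Thus the map $F=(F_1,\dots,F_N)$ with $F_k(a):=\langle u\circ\psi_a,X_k\rangle_{W^{1,2}}=\fint_{\S}\langle u,g_k^a\rangle$ is smooth, and solving $F(a)=0$ is exactly $\Pi_{n,0}(u\circ\psi_a)=0$. Using $\mathrm{id}_{\S}\perp_{W^{1,2}}H_{n,0}$ — an elementary computation from the explicit forms of $X_{S,\xi,\mu}$ and $P_T$ — one finds $F(0)=(\langle w,X_k\rangle_{W^{1,2}})_k$, so $|F(0)|\le\|w\|_{W^{1,2}}\lesssim\eta_n$; differentiating, $\partial_{a_i}F_k(0)=\langle Y_i,X_k\rangle_{W^{1,2}}+\fint_{\S}\langle w,\partial_{a_i}g_k^0\rangle$, where the matrix $\bigl(\langle Y_i,X_k\rangle_{W^{1,2}}\bigr)_{i,k}$ is invertible (because $Y\mapsto Y-\fint_{\S}Y$ is a linear isomorphism $\mob(\S)\to H_{n,0}$ and constants are $W^{1,2}$-orthogonal to the mean-free $X_k$), while the remainder is $\le\|w\|_{L^1}\sup_k\|\partial_{a_i}g_k^0\|_{L^\infty}\lesssim\eta_n$. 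Hence $DF(0)$ is invertible with $\|DF(0)^{-1}\|\le C_n$ once $\delta_n$ is small, and $\|D^2F\|\lesssim\|u\|_{L^1}\lesssim 1$ on a fixed ball $B_\rho(0)$; a standard quantitative inverse function (contraction mapping) argument then produces $a^\ast$ with $|a^\ast|\le C_n\eta_n$ and $F(a^\ast)=0$. I then set $\psi:=\psi_{a^\ast}\in\Mob_+(\S)$.

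It remains to check that $u\circ\psi-\fint_{\S}u\circ\psi\in\mathcal{B}_{\tilde\delta_n,\varepsilon_n}$ with $\tilde\delta_n\to 0$ as $\delta_n\to 0$. Property (i) is automatic, and (iii) is immediate from the invariance of $\mathcal{E}_{n-1}$ under translations and under precomposition with $\Mob(\S)$, so $\mathcal{E}_{n-1}(u\circ\psi-\fint_{\S}u\circ\psi)=\mathcal{E}_{n-1}(u)<\varepsilon_n$. For (ii), write $u\circ\psi=\psi+w\circ\psi$, so $\nabla_T(u\circ\psi)-P_T=(\nabla_T\psi-P_T)+\nabla_T(w\circ\psi)$; since $a\mapsto\nabla_T\psi_a$ is smooth with $\nabla_T\psi_0=P_T$ and $|a^\ast|\lesssim\eta_n$, we get $\|\nabla_T\psi-P_T\|_{L^\infty(\S)}\lesssim\eta_n$, while a change of variables together with $\|D\psi\|_{L^\infty}+\bigl\||\det D(\psi^{-1})|\bigr\|_{L^\infty}\le C_n$ gives $\fint_{\S}|\nabla_T(w\circ\psi)|^{n-1}\le C_n\fint_{\S}|\nabla_T w|^{n-1}\le C_n\delta_n$. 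Recalling $\eta_n^{n-1}=C_n\delta_n$ and that subtracting a constant does not change the tangential gradient, this yields $\fint_{\S}|\nabla_T(u\circ\psi-\fint_{\S}u\circ\psi)-P_T|^{n-1}\le C_n\delta_n=:\tilde\delta_n$, which lies in $(0,1)$ for $\delta_n$ small and tends to $0$ with $\delta_n$. I expect the only genuinely delicate point to be the compatibility of the low regularity of $u$ with differentiating the gauge functional $F$, which is exactly the purpose of the displayed identity above — moving the Laplacian and all the $a$-dependence onto the smooth fields $X_k$; everything else, including the $\Mob(\S)$-invariance of the deficit and the dimensional-only degradation of the constant in (ii), is routine.
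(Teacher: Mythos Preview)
Your proposal is correct and follows essentially the same approach as the paper, which defers the proof to \cite[Lemma~4.13]{zemas2022rigidity}: a quantitative inverse function theorem on the finite-dimensional group $\Mob_+(\S)$, with the low regularity of $u$ handled by pushing derivatives onto the smooth test fields via the identity $\langle u\circ\psi_a,X_k\rangle_{W^{1,2}}=\fint_{\S}\langle u,g_k^a\rangle$. Your verification that $\mathrm{id}_{\S}\perp_{W^{1,2}}H_{n,0}$, the invertibility of $(\langle Y_i,X_k\rangle_{W^{1,2}})_{i,k}$, and the $L^{n-1}$-closeness in property~(ii) (which the paper notes is immediate from the $L^2$ version in \cite{ZemasPhD} since the perturbation is along a finite-dimensional manifold) are all sound.
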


In order to apply Theorem \ref{thm:linstab}, we want to consider perturbations of $\id_\S$ which are $W^{1,2}(\S)$-orthogonal to it.
In that respect, let us set 
\begin{equation}\label{eq:fixing_lambda}
\lambda_{u,\psi}:=\fint_{\S}\langle u\circ \psi,x\rangle\,\d\haus\,,
\end{equation} 
and define $w\in W^{1,n-1}(\S;\R^n)$ via
\begin{equation}\label{eq:correct_w}
w:=\frac{1}{\lambda_{u,\psi}}\Big(u\circ \psi-\fint_{\S}u\circ \psi\Big)-\mathrm{id}_{\S}\,.
\end{equation} 
We then have the following simple result:

\begin{lemma}\label{lem:w}
The map $w$ defined through \eqref{eq:correct_w} satisfies $w\in H_n$ and  $\Pi_{n,0}w=0$. Moreover,  there is a constant $C_n>0$ such that, if $\delta_n \in(0,1)$ is sufficiently small, 
$$\fint_\S |\n_T w|^{n-1} \d \haus \leq C_n \tilde \delta_n =:\theta_n.$$
\end{lemma}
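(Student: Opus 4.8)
The statement has two parts: the membership $w\in H_n$ together with $\Pi_{n,0}w=0$, and the smallness estimate $\fint_\S|\nabla_T w|^{n-1}\le C_n\tilde\delta_n$. The plan is to extract the first part directly from the normalization built into \eqref{eq:correct_w}, and to derive the second from the ``closeness to $P_T$'' bound (ii) in the definition of $\mathcal B_{\tilde\delta_n,\varepsilon_n}$ provided by Lemma~\ref{fixingMobius}.

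\emph{Step 1: $w\in H_n$.} Writing $v:=\tfrac{1}{\lambda_{u,\psi}}\bigl(u\circ\psi-\fint_\S u\circ\psi\bigr)$, so that $w=v-\mathrm{id}_\S$, we first note that by construction $\fint_\S v=0$, hence $\fint_\S w=-\fint_\S\mathrm{id}_\S=0$ since $\mathrm{id}_\S$ has zero average on $\S$. Next, by the very choice of $\lambda_{u,\psi}$ in \eqref{eq:fixing_lambda} we have $\fint_\S\langle v,x\rangle=\tfrac{1}{\lambda_{u,\psi}}\fint_\S\langle u\circ\psi,x\rangle=1$ (the subtracted average contributes nothing against $x$, again because $\fint_\S x=0$). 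Therefore $\fint_\S\langle w,x\rangle=\fint_\S\langle v,x\rangle-\fint_\S|x|^2=1-1=0$, which together with the zero-average condition gives $w\in H_n$, recalling \eqref{eq:H_n}. For $\Pi_{n,0}w=0$: by \eqref{eq:ucompphi_zero_proj} we have $\Pi_{n,0}(u\circ\psi)=0$, and since $H_{n,0}$ is a linear subspace of $W^{1,2}$ consisting of infinitesimal M\"obius fields — in particular $\mathrm{id}_\S$ is $W^{1,2}$-orthogonal to it and constants project to zero — the operations of subtracting a constant, dividing by the scalar $\lambda_{u,\psi}$, and subtracting $\mathrm{id}_\S$ all commute with $\Pi_{n,0}$ or annihilate it, so $\Pi_{n,0}w=\tfrac{1}{\lambda_{u,\psi}}\Pi_{n,0}(u\circ\psi)=0$.

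\emph{Step 2: the smallness estimate.} From Lemma~\ref{fixingMobius} we know $u\circ\psi-\fint_\S u\circ\psi\in\mathcal B_{\tilde\delta_n,\varepsilon_n}$, i.e.\ $\fint_\S|\nabla_T(u\circ\psi)-P_T|^{n-1}\le\tilde\delta_n$. I would first control $\lambda_{u,\psi}$: since $\nabla_T(u\circ\psi)$ is $L^{n-1}$-close to $P_T=\nabla_T\mathrm{id}_\S$, integration by parts on $\S$ (the identity $\fint_\S\langle\mathrm{id}_\S,x\rangle=1$ and duality/Poincar\'e after subtracting the average) shows $|\lambda_{u,\psi}-1|\lesssim_n\tilde\delta_n^{1/(n-1)}$, hence $\lambda_{u,\psi}\ge 1/2$ once $\tilde\delta_n$ is small enough, and in particular $\lambda_{u,\psi}$ is bounded away from $0$ and $\infty$. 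Then
\begin{align*}
\Bigl(\fint_\S|\nabla_T w|^{n-1}\Bigr)^{\frac1{n-1}}
&=\Bigl(\fint_\S\Bigl|\tfrac{1}{\lambda_{u,\psi}}\nabla_T(u\circ\psi)-P_T\Bigr|^{n-1}\Bigr)^{\frac1{n-1}}\\
&\le \tfrac{1}{\lambda_{u,\psi}}\Bigl(\fint_\S|\nabla_T(u\circ\psi)-P_T|^{n-1}\Bigr)^{\frac1{n-1}}
+\Bigl|\tfrac{1}{\lambda_{u,\psi}}-1\Bigr|\,\|P_T\|_{L^{n-1}(\S)}\\
&\le 2\,\tilde\delta_n^{\frac1{n-1}}+C_n\,|\lambda_{u,\psi}-1|
\le C_n\,\tilde\delta_n^{\frac1{n-1}},
\end{align*}
using the triangle inequality in $L^{n-1}$, $\|P_T\|_{L^{n-1}(\S)}^{n-1}=n-1$, and the bound on $\lambda_{u,\psi}$. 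Raising to the power $n-1$ yields $\fint_\S|\nabla_T w|^{n-1}\le C_n\tilde\delta_n$, which is the claim with $\theta_n:=C_n\tilde\delta_n$.

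\emph{Main obstacle.} There is no deep difficulty here — the result is a bookkeeping lemma. The only point requiring care is Step~2's control of $\lambda_{u,\psi}$: one must make sure the $L^{n-1}$-closeness of $\nabla_T(u\circ\psi)$ to $P_T$, combined with the zero-average normalization, genuinely forces $\lambda_{u,\psi}$ near $1$ (equivalently, that $u\circ\psi-\fint_\S u\circ\psi$ is $W^{1,n-1}$-close to $\mathrm{id}_\S$ via Poincar\'e on $\S$), rather than near some other value; once this is in hand, the division by $\lambda_{u,\psi}$ and the final triangle-inequality estimate are routine, and smallness of $\delta_n$ (hence of $\tilde\delta_n$) is used exactly to keep $\lambda_{u,\psi}$ uniformly away from $0$.
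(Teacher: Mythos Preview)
Your proposal is correct and follows essentially the same approach as the paper: both verify $w\in H_n$ and $\Pi_{n,0}w=0$ directly from the normalizations, then control $\lambda_{u,\psi}$ via Poincar\'e/H\"older to obtain $|\lambda_{u,\psi}-1|\lesssim\tilde\delta_n^{1/(n-1)}$, and finish with a triangle-inequality splitting of $\nabla_T w$. One inconsequential slip: $\|P_T\|_{L^{n-1}(\S)}^{n-1}=(n-1)^{(n-1)/2}$, not $n-1$, but this is absorbed into $C_n$ anyway.
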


\begin{proof}
The first claims are immediate from \eqref{eq:ucompphi_zero_proj},  \eqref{eq:fixing_lambda} and the definition of $H_n$ in \eqref{eq:H_n}.
To prove the estimate,  we begin by  recalling the convention that the $L^p$-norms are taken with respect to the normalized $\mathcal{H}^{n-1}$-measure. Then, by the cancellation property $\fint_{\S} x=0$, the Poincar{\'e} inequality on $\S$, H{\" older's} inequality and \eqref{eq:ucompphi_zero_proj}, we obtain
\begin{align}
\label{eq:lambda_estimate}
\begin{split}
\big|\lambda_{u,\psi}-1\big|&=\left|\fint_{\S}\big\langle (u\circ\psi-x)-\fint_{\S}(u\circ \psi-x),x\big\rangle\right|\\
&\leq \Big\| (u\circ\psi-x)-\fint_{\S}(u\circ \psi-x)\Big\|_{L^2(\S)}\\
&\lesssim \Big\| \nabla_T(u\circ\psi)-P_T\Big\|_{L^2(\S)}\lesssim \Big\| \nabla_T(u\circ\psi)-P_T\Big\|_{L^{n-1}(\S)}\\
&\leq (\tilde \delta_n)^{\frac{1}{n-1}} \leq \frac{1}{2}\,,
\end{split}
\end{align}
by choosing $\delta_n\in (0,1)$ sufficiently small so that $\tilde \delta_n\in (0,1)$ is also small.
Hence,  as $u\in \mathcal{B}_{\delta_n,\varepsilon_n}$ (cf.\ \eqref{eq:local_family}),  and by \eqref{eq:ucompphi_zero_proj} and the conformal invariance of the $(n-1)$-Dirichlet energy, we get
\begin{align*}
\fint_{\S}|\nabla_T w|^{n-1}&=\fint_{\S}\bigg|\frac{1}{\lambda_{u,\psi}}\nabla_T(u\circ\psi)-P_T\bigg|^{n-1}\nonumber\\
&\lesssim \Big|\frac{1}{\lambda_{u,\psi}}-1\Big|^{n-1}\fint_{\S}|\nabla_T(u\circ \psi)|^{n-1}+\fint_{\S}|\nabla_T(u\circ \psi)-P_T|^{n-1}\\
& \lesssim |\lambda_{u,\psi}-1|^{n-1}\fint_{\S}|\nabla_T u|^{n-1}+\fint_{\S}|\nabla_T(u\circ \psi)-P_T|^{n-1} \\
& \lesssim \tilde \delta_n\left(\fint_{\S}|\nabla_T u-P_T|^{n-1}+ \fint_{\S}|P_T|^{n-1}\right) + \tilde \delta_n\\
& \lesssim \tilde \delta_n(\delta_n+1)\,,
\end{align*}
as wished. \qedhere
\end{proof}


\subsection{Proof of Theorem \ref{thm:qttiveintro}}\label{sec:main_quantitative_local} 

We now begin the proof of the main quantitative estimate of this section:  by the preparations of the previous subsection,  this estimate will imply Theorem \ref{thm:qttiveintro} easily.

\begin{proposition}[Local nonlinear stability]\label{prop:local_nonlinear_stability}
There exists a constant $\tilde c_n>0$ and sufficiently small constants $\theta_n,\varepsilon_n\in (0,1)$ such that
\begin{align}\label{eq:nonlin_stab}
\mathcal E_{n-1}(\mathrm{id}_\S+w)
\geq \tilde c_n \fint_{\mathbb S^{n-1}}|\nabla_T w|^{n-1}\,\mathrm{d}\haus \,,
\end{align}
for all maps $w\in W^{1,n-1}(\S;\R^n)$ such that
\begin{equation}\label{eq:condw}
w\in H_n\,, \ \ \text{ \ } \Pi_{n,0}w=0\,, \ \ \text{and }\ \mathrm{id}_{\S}+w\in \mathcal{B}_{\theta_n,\varepsilon_n}\,,
\end{equation}
where $H_n$, $\Pi_{n,0}$ are as in Theorem \ref{thm:linstab} and $\mathcal{B}_{\theta_n,\varepsilon_n}$ is defined through \eqref{eq:local_family}\,.
\end{proposition}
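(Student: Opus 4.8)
The strategy follows the blueprint sketched in Subsection~\ref{quantitative_description}: produce a Taylor-type \emph{lower} inequality for $\mathcal E_{n-1}(\mathrm{id}_\S+w)$ whose leading term controls the critical norm $\fint_\S|\nabla_Tw|^{n-1}$, and whose remaining (subcritical) terms are shown to be nonnegative on the constrained class by combining the linear stability estimate of Theorem~\ref{thm:linstab} with a compactness argument. Write $u:=\mathrm{id}_\S+w$ and recall $\mathcal E_{n-1}(u)=[\mathcal D_{n-1}(u)]^{n/(n-1)}/\mathcal V_n(u)-1$ (we may assume $\mathcal V_n(u)>0$ by the reduction at the start of Subsection~\ref{subsec: 1st_2nd_variation}; note $\mathcal V_n(u)$ is close to $1$ when $\theta_n,\varepsilon_n$ are small).

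\textbf{Step 1: Expansion of the volume.} Apply Lemma~\ref{lemma_on_null_Lagrangians} to expand $\mathcal V_n(u)=\mathcal V_n(\mathrm{id}_\S+w)$. Using \eqref{eq:first_term_in_the_expansion} and the fact that $w\in H_n$ (so $\fint_\S\langle w,x\rangle=0$), the linear term vanishes. The quadratic term $k=2$ produces, after an integration by parts on $\S$, precisely the contribution $-\tfrac n2\fint_\S\langle w,A(w)\rangle$ entering $Q_n$ in \eqref{eq:Qn}. The top term $k=n$ equals $\mathcal V_n(w)$ by \eqref{last_term_in_the_expansion}, which is $\mathcal O(\fint_\S|\nabla_Tw|^{n-1})$ and can be absorbed into the leading term for $\theta_n$ small (using $|\mathcal V_n(w)|\lesssim\fint_\S|\nabla_Tw|^{n-1}$ via \eqref{eq:param_isoperimetric_ineq} and \eqref{eq:Hadamard}). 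The intermediate terms $k=3,\dots,n-1$ are bounded, by \eqref{eq:trivial_estimate_for_intermediate_terms}, by $C_n\fint_\S|w||\nabla_Tw|^{k-1}$; these are exactly the gradient terms that must be tracked carefully — they are cubic-and-higher and cannot simply be discarded. Write the outcome as
\begin{align*}
\mathcal V_n(\mathrm{id}_\S+w)=1-\tfrac{2}{n}\!\cdot\!\tfrac n2\fint_\S\langle w,A(w)\rangle+\mathcal V_n(w)+R_V[w,\nabla_Tw],\qquad |R_V|\le C_n\textstyle\sum_{k=3}^{n-1}\fint_\S|w||\nabla_Tw|^{k-1}.
\end{align*}

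\textbf{Step 2: Taylor-type lower bound for the conformal Dirichlet energy.} This is where the Figalli--Zhang idea enters: instead of a naive expansion of $\mathcal D_{n-1}(u)=\fint_\S(|\nabla_Tu|^2/(n-1))^{(n-1)/2}$ around $\nabla_T\mathrm{id}_\S=P_T$, use the pointwise convexity inequality for $t\mapsto|t|^{(n-1)/2}$ (this is Lemma~\ref{lem_Dirichlet_exp}, which I may invoke) to get a lower bound of the form
\begin{align*}
\mathcal D_{n-1}(\mathrm{id}_\S+w)\ \ge\ 1+\tfrac{n-1}{2(n-1)}\!\cdot\!\tfrac{1}{?}\,(\text{quadratic-type nonquadratic term in }\nabla_Tw)\ +\ c_n\fint_\S|\nabla_Tw|^{n-1}\,,
\end{align*}
where the precise quadratic-type term is the nonquadratic surrogate $\widetilde Q_n$'s Dirichlet part; the key point is that the leading critical term $c_n\fint_\S|\nabla_Tw|^{n-1}$ appears with a good sign. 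Raise this to the power $n/(n-1)$ (using $(1+a+b)^{n/(n-1)}\ge 1+\tfrac{n}{n-1}(a+b)$ for $a,b\ge0$, with $a$ the nonquadratic term and $b$ the critical term), and divide by $\mathcal V_n(u)=1+\mathcal O(\theta_n)$. Combining with Step~1 and collecting terms yields an estimate of the shape \eqref{eq:quantitative_proof}:
\begin{align*}
\mathcal E_{n-1}(\mathrm{id}_\S+w)\ \gtrsim\ \fint_\S|\nabla_Tw|^{n-1}\ +\ \widetilde Q_n(w)\ -\ B[w,\nabla_Tw]\ -\ \|\nabla_Tw\|_{L^2(\S)}^{2+\alpha}\,,
\end{align*}
for a suitable bilinear form $B$ (arising from the $k$-intermediate terms of Step~1 and cross terms) and some $\alpha>0$, where $\widetilde Q_n(w)$ is the nonquadratic quantity playing the role of $Q_n(w)$ but lower than it.

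\textbf{Step 3: Nonnegativity of the subcritical remainder via linear stability and compactness.} It remains to show that, for $w$ as in \eqref{eq:condw} with $\theta_n$ small enough,
\begin{align*}
\widetilde Q_n(w)-B[w,\nabla_Tw]-\|\nabla_Tw\|_{L^2(\S)}^{2+\alpha}\ \ge\ 0\,.
\end{align*}
Here I expect the \emph{main obstacle}: unlike in \cite{figalli2022sobolev}, $B$ involves derivatives of $w$, so the critical term in Step~2 cannot absorb it and one genuinely needs the precise structure of $B$ together with the coercivity of $Q_n$. The argument is by contradiction and rescaling, in the spirit of \cite[Proposition~3.8]{figalli2022sobolev}: suppose there is a sequence $w_j$ with the constraints, $\|\nabla_Tw_j\|_{L^{n-1}(\S)}\to0$, and the displayed quantity negative. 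Normalize by $t_j:=\|\nabla_Tw_j\|_{L^2(\S)}$ and set $\bar w_j:=w_j/t_j$; then $\|\nabla_T\bar w_j\|_{L^2(\S)}=1$, $\bar w_j\in H_n$, $\Pi_{n,0}\bar w_j=0$, and $\bar w_j\rightharpoonup\bar w$ weakly in $W^{1,2}(\S;\R^n)$. Show that $\widetilde Q_n(w_j)/t_j^2\to Q_n(\bar w)$ (the nonquadratic surrogate, once rescaled by $t_j^2$ with $t_j\to0$, converges to the genuine quadratic form — this is the analogue of \cite[Proposition~3.8(iii)]{figalli2022sobolev} and uses that the nonquadratic correction is of lower order as the perturbation vanishes); that $B[w_j,\nabla_Tw_j]/t_j^2\to 0$ because $B$ is (at least) cubic in $(\bar w_j,\nabla_T\bar w_j)$ after rescaling, so picks up an extra factor $t_j\to0$ (using the intermediate-term bounds $\fint_\S|w||\nabla_Tw|^{k-1}$ with $k\ge3$, together with compact Sobolev embeddings $W^{1,2}\hookrightarrow L^q$ on $\S$); and that $\|\nabla_Tw_j\|_{L^2}^{2+\alpha}/t_j^2=t_j^\alpha\to0$. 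Passing to the limit gives $Q_n(\bar w)\le0$, while Theorem~\ref{thm:linstab} (applicable since $\bar w\in H_n$) gives $Q_n(\bar w)\ge C_n\|\nabla_T\bar w-\nabla_T(\Pi_{n,0}\bar w)\|_{L^2}^2=C_n\|\nabla_T\bar w\|_{L^2}^2$ because $\Pi_{n,0}\bar w=0$ (the projection is weakly continuous on the finite-dimensional $H_{n,0}$). Hence $\nabla_T\bar w=0$, so $\bar w$ is constant, and being in $H_n$ it is $0$; but then $\|\nabla_T\bar w_j\|_{L^2}=1$ together with strong convergence (which one upgrades from weak via the equation/lower-order control, or directly from $Q_n(\bar w_j)\to0$ and coercivity) forces $1=\|\nabla_T\bar w\|_{L^2}=0$, a contradiction. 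This establishes nonnegativity of the remainder, and plugging back into Step~2 yields \eqref{eq:nonlin_stab} with $\tilde c_n$ a fraction of the constant in front of the critical term.

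The delicate points are: (a) getting the \emph{signed} critical term out of the Dirichlet expansion with the right structure so that, after raising to the power $n/(n-1)$ and dividing by $\mathcal V_n$, it survives — this forces the use of the interpolated Taylor inequality (Lemma~\ref{lem_Dirichlet_exp}) rather than a plain expansion; and (b) identifying $B$ precisely enough that in the rescaled limit of Step~3 every derivative-carrying term of $B$ acquires a vanishing prefactor $t_j^{\gamma}$, $\gamma>0$ — this is exactly the "precise structure of these gradient terms is crucial" remark at the end of Subsection~\ref{quantitative_description}, and is the step I expect to require the most care.
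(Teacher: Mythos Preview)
Your overall architecture---Taylor-type lower bound for $\mathcal D_{n-1}$ via Lemma~\ref{lem_Dirichlet_exp}, volume expansion via Lemma~\ref{lemma_on_null_Lagrangians}, then a contradiction/rescaling argument using Theorem~\ref{thm:linstab}---matches the paper. But Step~3 contains a real gap in the handling of the intermediate terms $B$.

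You claim $B[w_j,\nabla_Tw_j]/t_j^2\to 0$ because ``$B$ is at least cubic after rescaling, so picks up an extra factor $t_j\to 0$''. Concretely, for $k\in\{3,\dots,n-1\}$ this would require
\[
\frac{1}{t_j^2}\fint_\S |w_j|\,|\nabla_Tw_j|^{k-1}
= t_j^{k-2}\fint_\S|\bar w_j|\,|\nabla_T\bar w_j|^{k-1}\longrightarrow 0\,,
\]
and for this you would need $\fint_\S|\bar w_j|\,|\nabla_T\bar w_j|^{k-1}$ to be at worst $o(t_j^{-(k-2)})$. But you only control $\|\nabla_T\bar w_j\|_{L^2}=1$; there is no bound on $\|\nabla_T\bar w_j\|_{L^p}$ for $p>2$, and the compact embedding $W^{1,2}\hookrightarrow L^q$ only helps with $\bar w_j$, not with $\nabla_T\bar w_j$. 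Since the ratio $\|\nabla_Tw_j\|_{L^{n-1}}/\|\nabla_Tw_j\|_{L^2}$ can blow up along the sequence, the displayed quantity need not vanish. The paper avoids this by inserting an interpolation step \emph{before} the compactness argument: for each $3\le k\le n-1$,
\[
\fint_\S|w|\,|\nabla_Tw|^{k-1}\ \lesssim\ \Big(\fint_\S|\nabla_Tw|^2\Big)^{1+\alpha_n}+\Big(\fint_\S|\nabla_Tw|^{n-1}\Big)^{1+\beta_n},
\]
with $\alpha_n,\beta_n>0$ (Sobolev $+$ H\"older $+$ Young, see \eqref{eq:interpolation}). The $L^{n-1}$ piece is then absorbed into the critical term using $\theta_n\ll 1$, so the only ``bad'' remainders entering the contradiction argument are a pure $(\fint_\S|\nabla_Tw|^2)^{1+\alpha_n}$ term (which after rescaling becomes $t_j^{2\alpha_n}\to 0$) and a $\kappa$-small multiple of $\fint_\S\langle w,A(w)\rangle$ (which is \emph{bilinear} in $(w,\nabla_Tw)$, hence rescales to a bounded quantity converging to $\fint_\S\langle\hat w,A(\hat w)\rangle$ by strong $L^2$/weak $W^{1,2}$).

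A second, smaller issue: you assert $\widetilde Q_n(w_j)/t_j^2\to Q_n(\bar w)$. The paper only proves (and only needs) the \emph{liminf} inequality, and this already requires care: one writes $(|P_T+\nabla_Tw_j|-|P_T|)/t_j=f_j+g_j$ with $f_j\to\mathrm{div}_\S\hat w/\sqrt{n-1}$ strongly and $g_j\rightharpoonup 0$ weakly in $L^2$, and uses $|N(P_T,\nabla_Tw_j)|\to 1$ a.e.\ to recover the $(\mathrm{div}_\S\hat w)^2$ term of $Q_n$. Your route to the final contradiction (forcing $\bar w=0$ and then trying to upgrade to strong $W^{1,2}$ convergence) is also more circuitous than necessary: once the interpolation is in place, the limiting inequality reads $Q_n(\hat w)\le c\fint_\S\langle\hat w,A(\hat w)\rangle\lesssim |c|\fint_\S|\nabla_T\hat w|^2$, and the structure of \eqref{eq:absurd_assumption_4} directly gives $\hat w\not\equiv 0$, so one contradicts \eqref{eq:lin_stab} for $|c|$ small without any strong-convergence upgrade.
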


Before giving the proof of the proposition,
we start with some auxiliary calculations for a Taylor-type expansion of the deficit $\mc E_{n-1}$ around the identity, with perturbations as the ones in \eqref{eq:condw}. We begin with the expansion of the volume.

\begin{lemma}[Expansion of the volume]
\label{lemma:prep_calc}
Let $w\in W^{1,n-1}(\S;\R^n)$ satisfy \eqref{eq:condw}, then
\begin{equation}
\label{eq:exp_vol}
\mc V_n(\id_\S + w) = 1+ \frac n 2 \fint_\S \langle w, A(w)\rangle + \sum_{k=3}^{n-1}\frac{n}{k}\fint_{\mathbb S^{n-1}}\langle w,[\sigma'_k(\nabla_T wP_T^t)]^t x\rangle+ \mc V_n(w)\,,
\end{equation}
where recall that $A$ is the linear operator defined in \eqref{eq:opearator_A}.
In particular, if $\theta_n\in (0,1)$ is chosen sufficiently small, then 
\begin{equation}\label{eq:vol_trivial_bound}
\frac{1}{2}\leq \mc V_n(\mathrm{id}_{\S}+w)\leq \frac{3}{2}\,.
\end{equation}
\end{lemma}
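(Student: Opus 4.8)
The starting point is the expansion formula \eqref{eq:boundary_integrals} from Lemma~\ref{lemma_on_null_Lagrangians}, which expresses $\mc V_n(\mathrm{id}_\S+w)$ as $1$ plus a sum over $k=1,\dots,n$ of the terms $\frac{n}{k}\fint_\S\langle w,[\sigma'_k(\nabla_T wP_T^t)]^tx\rangle$. The goal of the first part of the statement, \eqref{eq:exp_vol}, is essentially a bookkeeping identity: I would isolate the $k=1$ term, the $k=2$ term, the intermediate terms $k=3,\dots,n-1$, and the $k=n$ term. For $k=1$, formula \eqref{eq:first_term_in_the_expansion} gives $n\fint_\S\langle w,x\rangle$, which vanishes because $w\in H_n$ (cf.\ \eqref{eq:H_n}). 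For $k=n$, formula \eqref{last_term_in_the_expansion} identifies the term with $\mc V_n(w)$. The only real content is to check that the $k=2$ term equals $\frac n2\fint_\S\langle w,A(w)\rangle$ with $A$ as in \eqref{eq:opearator_A}; this is a direct computation using $\sigma_2(B)=\frac12((\mathrm{tr}\,B)^2-\mathrm{tr}(B^2))$, hence $\sigma_2'(B)=(\mathrm{tr}\,B)I_n-B^t$, applied to $B=\nabla_T wP_T^t$, together with $P_T^tP_T x=0$ and $\mathrm{tr}(\nabla_T wP_T^t)=\mathrm{div}_\S w$. Collecting everything yields \eqref{eq:exp_vol}. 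This matches the form of $Q_n$ in \eqref{eq:Qn} in that the $\frac n2\langle w,A(w)\rangle$ term is precisely (up to sign) the non-Dirichlet part of the second variation.

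\textbf{The quantitative bound.} For \eqref{eq:vol_trivial_bound}, I would estimate each term on the right-hand side of \eqref{eq:exp_vol} other than the leading $1$, using the smallness encoded in $\mathrm{id}_\S+w\in\mathcal{B}_{\theta_n,\varepsilon_n}$, i.e.\ $\fint_\S|\nabla_T w|^{n-1}\le\theta_n$ (condition (ii) of \eqref{eq:local_family} since $\nabla_T(\mathrm{id}_\S+w)-P_T=\nabla_T w$). The $\frac n2\langle w,A(w)\rangle$ term and the intermediate terms are controlled by \eqref{eq:trivial_estimate_for_intermediate_terms}, which bounds the $k$-th term by $C_n\fint_\S|w||\nabla_T w|^{k-1}$; for the $k=2$ contribution the analogous bound is $C_n\fint_\S|w||\nabla_T w|$. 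To turn these into powers of $\theta_n$ I would use the Poincar\'e inequality on $\S$ for zero-average maps (recall $\fint_\S w=0$), giving $\|w\|_{L^{n-1}(\S)}\lesssim\|\nabla_T w\|_{L^{n-1}(\S)}\le\theta_n^{1/(n-1)}$, and then H\"older: $\fint_\S|w||\nabla_T w|^{k-1}\le\|w\|_{L^{n-1}}\|\nabla_T w\|_{L^{n-1}}^{k-1}\lesssim\theta_n^{k/(n-1)}$, which is $\lesssim\theta_n^{2/(n-1)}$ for all $k\ge2$. Finally the term $\mc V_n(w)$ is bounded via the Hadamard-type inequality \eqref{eq:Hadamard} and the parametric isoperimetric inequality, or more directly by $|\mc V_n(w)|\le\fint_\S|w||J(w)|\lesssim\|w\|_{L^{n-1}}\|\nabla_T w\|_{L^{n-1}}^{n-1}\lesssim\theta_n^{n/(n-1)}$, again using Poincar\'e and H\"older together with $|J(w)|\le(n-1)^{-(n-1)/2}|\nabla_T w|^{n-1}$. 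Summing, $|\mc V_n(\mathrm{id}_\S+w)-1|\le C_n\theta_n^{2/(n-1)}$, so choosing $\theta_n$ small enough that $C_n\theta_n^{2/(n-1)}\le\frac12$ gives \eqref{eq:vol_trivial_bound}.

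\textbf{Main obstacle.} There is no genuine obstacle here: this lemma is a preparatory computation, and the two places requiring a little care are (a) the identification of the $k=2$ term with $\frac n2\langle w,A(w)\rangle$, which needs one to correctly compute $\sigma_2'$ and to use the structural identities for $P_T$ on $\S$ — a straightforward but slightly fiddly matrix calculation — and (b) making sure that the Poincar\'e inequality is applicable, which is exactly why the hypothesis $w\in H_n$ (hence $\fint_\S w=0$) is invoked. The only subtlety worth flagging in the writeup is that the exponent $2/(n-1)$ degrades as $n$ grows but remains positive, so the argument is uniform once $\theta_n$ is fixed dimensionally; and that the bound on $\mc V_n(w)$ uses the full nonlinearity but is the highest-order (hence smallest) term, so it causes no trouble.
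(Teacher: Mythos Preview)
Your proposal is correct and follows essentially the same route as the paper's proof: invoke Lemma~\ref{lemma_on_null_Lagrangians}, identify the $k=1,2,n$ terms (the paper simply cites \eqref{eq:opearator_A} for the $k=2$ identification you spell out), and then bound the remainder via \eqref{eq:trivial_estimate_for_intermediate_terms}, H\"older, and Poincar\'e. One caveat on your ``more direct'' alternative for the top term: the estimate $\fint_\S|w||J(w)|\lesssim\|w\|_{L^{n-1}}\|\nabla_T w\|_{L^{n-1}}^{n-1}$ does not follow from H\"older (the exponents fail to close precisely at $k=n$), and for unbounded $w\in W^{1,n-1}$ the pointwise integral $\fint_\S\langle w,J(w)\rangle$ need not even coincide with $\mc V_n(w)$; the isoperimetric bound $|\mc V_n(w)|\le[\mc D_{n-1}(w)]^{n/(n-1)}$ you also mention is exactly what the paper uses and is the correct way to handle this term.
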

\begin{proof}
The expansion \eqref{eq:exp_vol} is an immediate consequence of  \eqref{eq:boundary_integrals},  using the fact that the $k=1$ term therein vanishes by \eqref{eq:first_term_in_the_expansion} and \eqref{eq:condw},  the  $k=2$ term therein is precisely $\frac{n}{2}\fint_{\S}\langle w,A(w)\rangle$, cf.\ \eqref{eq:opearator_A}, and the $k=n$ term is the volume by \eqref{last_term_in_the_expansion}. 

To prove \eqref{eq:vol_trivial_bound}, we start from \eqref{eq:exp_vol} and we estimate using \eqref{eq:trivial_estimate_for_intermediate_terms} and H\"older's inequality:
\begin{align*}
|\mc V_n(\mathrm{id}_{\S}+w)-1|&\leq \Bigg|\sum_{k=2}^{n-1} \frac{n}{k}\fint_{\S}\langle w,[\sigma'_k(\nabla_T wP_T^t)]^tx\rangle\Bigg|+\big|\mc V_n(w)\big|\\
&\lesssim \sum^{n-1}_{k= 2 }\fint_{\S}|w||\nabla_Tw|^{k-1}+ \big|\mc V_n(w)\big|\\
&\leq \sum^{n-1}_{k= 2 %
}\|w\|_{L^{\frac{n-1}{n-k}}(\S)}\|\nabla_Tw\|_{L^{n-1}(\S)}^{k-1}+ \big|\mc V_n(w)\big|\\
&\leq \sum^{n-1}_{k= 2 
}\|w\|_{L^{n-1}(\S)}\|\nabla_Tw\|_{L^{n-1}(\S)}^{k-1}+ \big[\mc D_{n-1}(w)\big]^\frac{n}{n-1}\\
&\lesssim  \sum^{n}_{k=2}
\|\nabla_Tw\|_{L^{n-1}(\S)}^{k}
 \lesssim \theta_n^{\frac{ 2 \EEE
 	}{n-1}}\leq \frac{1}{2} \,,
\end{align*}
provided that $\theta_n\in(0,1)$ is chosen sufficiently small.  Note that here we have also used the Poincar{\'e} inequality in $W^{1,n-1}(\S;\R^n)$ (which is applicable due to \eqref{eq:condw}), and again the parametric conformal-isoperimetric inequality.
\end{proof}

We next give a Taylor-type lower inequality for the conformal $(n-1)$-Dirichlet energy, which is essentially the one devized in \cite{figalli2022sobolev}.

\begin{lemma}[Expansion of the $(n-1)$-Dirichlet energy]\label{lem_Dirichlet_exp}
Let $w\in W^{1,n-1}(\S;\R^n)$ satisfy \eqref{eq:condw}. 
For every $\kappa\in (0,1)$ there exists $c(\kappa)>0$ such that 
\begin{align}\label{eq:dirichlet_energy_fig_expansion_trivial_bound}
\begin{split}
\mc D_{n-1}(\mathrm{id}_{\S}+w)\geq 1&+\frac{(1-\kappa)}{2}\fint_{\S}|\n_T w|^2+ c(\kappa)\fint_{\S}|\n_T w|^{n-1}\\
&+\frac{(1-\kappa)(n-3)}{2}\fint_{\S}|N(P_T,\n_T w)|^{n-3}\big(|P_T|-|P_T+\n_Tw|\big)^2\,,	
\end{split}
\end{align}
where
\begin{equation}\label{eq:nonlinear_extra_termapplication_of_figalli_zhang}
N(X,Y):=\begin{cases}
\frac{X}{|X|} &\text{if }|X|\leq |X+Y|\,, \\
\Big(\frac{|X+Y|}{|X|}\Big)^{\frac 1{n-3}}\Big(\frac{X+Y}{|X|}\Big)
&
\text{if }|X|\geq |X+Y|\,.
\end{cases}
\end{equation}
\end{lemma}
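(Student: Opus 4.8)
The plan is to reduce \eqref{eq:dirichlet_energy_fig_expansion_trivial_bound} to a pointwise algebraic inequality for the integrand. Since $|P_T|^2=n-1$ pointwise on $\S$, the density of $\mc D_{n-1}$ at $\mathrm{id}_{\S}$ equals $1$, and, evaluating Lemma~\ref{lemma: first_variation} at $\mathrm{id}_{\S}$ (where $\nabla_T\mathrm{id}_{\S}=P_T$ and $|P_T|=\sqrt{n-1}$), the first variation of $\mc D_{n-1}$ at $\mathrm{id}_{\S}$ in the direction $w$ has density $P_T:\nabla_T w=\mathrm{div}_{\S}w$. Since $w\in H_n$, the divergence theorem on $\S$ gives $\fint_\S P_T:\nabla_T w\,\d\haus=(n-1)\fint_\S\langle w,x\rangle\,\d\haus=0$, so that $\mc D_{n-1}(\mathrm{id}_{\S}+w)=1+\fint_\S\big[(\tfrac{|P_T+\nabla_T w|^2}{n-1})^{\frac{n-1}{2}}-1-P_T:\nabla_T w\big]\,\d\haus$. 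It therefore suffices to prove the pointwise bound
\[
\Big(\tfrac{|X+Y|^2}{n-1}\Big)^{\frac{n-1}{2}}-1-X:Y\;\geq\;\tfrac{1-\kappa}{2}|Y|^2+c(\kappa)|Y|^{n-1}+\tfrac{(1-\kappa)(n-3)}{2}|N(X,Y)|^{n-3}\big(|X|-|X+Y|\big)^2
\]
for $X:=P_T$ (so $|X|^2=n-1$) and every $n\times(n-1)$ matrix $Y:=\nabla_T w$; integrating over $\S$ then gives \eqref{eq:dirichlet_energy_fig_expansion_trivial_bound}. When $n=3$ the last term on the right is absent, the left-hand side equals $\tfrac12|Y|^2$ identically, and the bound holds as soon as $c(\kappa)\le\kappa/2$; so from now on assume $n\ge4$.

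For the pointwise bound I would decompose $Y$ orthogonally with respect to $X$ in the Frobenius inner product, $Y=tX+Z$ with $Z:X=0$. Then $|X+Y|^2=(1+t)^2(n-1)+|Z|^2$, $X:Y=t(n-1)$, $|Y|^2=t^2(n-1)+|Z|^2$, and $|N(X,Y)|$ (cf.\ \eqref{eq:nonlinear_extra_termapplication_of_figalli_zhang}) depends only on $s:=|X+Y|/|X|\ge0$, with $|N(X,Y)|^{n-3}=\min\{1,s^{n-2}\}$. Writing $B:=|Y|^2/(n-1)$ one has $B\in[(s-1)^2,(s+1)^2]$ and $t=(s^2-1-B)/2$, and the pointwise bound becomes the scalar inequality
\[
s^{n-1}-1-\tfrac{n-1}{2}(s^2-1)\;\geq\;-\tfrac{\kappa(n-1)}{2}B+c(\kappa)(n-1)^{\frac{n-1}{2}}B^{\frac{n-1}{2}}+\tfrac{(1-\kappa)(n-3)(n-1)}{2}\min\{1,s^{n-2}\}(1-s)^2,
\]
required for every $s\ge0$ and every $B\in[(s-1)^2,(s+1)^2]$. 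Since the right-hand side is convex in $B$ (linear plus a multiple of $B^{(n-1)/2}$, with $n-1\ge2$), its maximum over the admissible interval is attained at one of the endpoints $B=(s-1)^2$ or $B=(s+1)^2$, i.e.\ for the two purely radial perturbations $Y=tX$; for those the inequality is one-dimensional in $s$.

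It then remains to verify this one-variable inequality. One uses that the left-hand side $f(s):=s^{n-1}-1-\tfrac{n-1}{2}(s^2-1)$ is nonnegative, vanishing only at $s=1$ where $f(s)\sim\tfrac{(n-1)(n-3)}{2}(s-1)^2$, and checks it in three regimes: $s$ near $1$ (where the slack $\tfrac\kappa2|Y|^2$ beats the quadratic error, the term $\min\{1,s^{n-2}\}(1-s)^2$ accounting precisely for the second-order radial behaviour lost in the purely quadratic expansion), $s$ large (where $s^{n-1}$ dominates every term on the right), and $s$ small (where $f(s)$ is bounded below by $\tfrac{n-3}{2}>0$ while the right-hand side stays bounded). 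This forces $\kappa\in(0,1)$ to be arbitrary and $c(\kappa)>0$ to be chosen small, depending on $\kappa$ and $n$.

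This last scalar verification across the three $s$-regimes, carried out exactly as in the proof of the corresponding lemma in \cite{figalli2022sobolev}, is the only delicate point; the divergence-theorem identity, the Frobenius splitting, the identification $|N(X,Y)|^{n-3}=\min\{1,s^{n-2}\}$, and the convexity-in-$B$ reduction to the endpoints of the admissible interval are all elementary.
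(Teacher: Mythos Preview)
Your proof is correct and follows the same route as the paper: both reduce \eqref{eq:dirichlet_energy_fig_expansion_trivial_bound} to a pointwise algebraic inequality for $|P_T+\nabla_T w|^{n-1}$, use $\fint_{\S} P_T:\nabla_T w = (n-1)\fint_{\S}\langle w,x\rangle = 0$ to kill the linear term, and then integrate. The only difference is that the paper simply quotes the pointwise inequality as \cite[Lemma~2.1(ii)]{figalli2022sobolev}, whereas you additionally sketch its proof via the orthogonal splitting $Y=tX+Z$, the identification $|N(X,Y)|^{n-3}=\min\{1,s^{n-2}\}$, and the convexity-in-$B$ reduction to the radial endpoints.
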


\begin{proof}
For $\kappa\in(0,1)$ arbitrary, using the algebraic inequality obtained in \cite[Lemma 2.1(ii)]{figalli2022sobolev} with $p:=n-1\geq 2$, we deduce that for some constant $c(\kappa)>0$,
\begin{align}\label{eq:application_of_figalli_zhang}
\begin{split}
|P_T+\nabla_Tw|^{n-1}& \geq |P_T|^{n-1}+(n-1)|P_T|^{n-3}P_T:\n_T w+ c(\kappa)|\n_T w|^{n-1}\\
&\quad +\frac{(1-\kappa)(n-1)}{2}\Big[|P_T|^{n-3}|\n_T w|^2\\
&\qquad +(n-3)|M(P_T,\n_T w)|^{n-3}\big(|P_T|-|P_T+\n_Tw|\big)^2\Big]\,
\end{split}
\end{align}
$\haus$-a.e. on $\S$, where for every $X,Y\in \R^{n\times(n-1)}$ we have denoted
\begin{equation*}
M(X,Y):=\begin{cases}
X &\text{if }|X|\leq |X+Y|\,, \\
\Big(\frac{|X+Y|}{|X|}\Big)^{\frac 1{n-3}}(X+Y)
&
\text{if }|X|\geq |X+Y|\,.
\end{cases}
\end{equation*}
By using the fact that $|P_T|=\sqrt{n-1}$,  integrating \eqref{eq:application_of_figalli_zhang} on $\S$, using
\[\fint_{\S}P_T:\n_T w=\fint_{\S}\mathrm{div}_{\S}w=(n-1)\fint_{\S}\langle w,x\rangle=0\,,\]
(cf. \eqref{eq:condw}) and rearranging terms, we arrive at \eqref{eq:dirichlet_energy_fig_expansion_trivial_bound}. 
\end{proof}

We are now ready to give the proof of Proposition \ref{prop:local_nonlinear_stability},  which is based on the above two lemmata, the coercivity estimate of Theorem \ref{thm:linstab}, and a contradiction$\slash$compactness argument 
inspired by \cite[Proposition~3.8]{figalli2022sobolev}.

\begin{proof}[Proof Proposition \ref{prop:local_nonlinear_stability}] Let us choose $\theta_n\in(0,1)$ small enough so that Lemma \ref{lemma:prep_calc} applies, and let also $\kappa\in (0,1)$ to be chosen later. In what follows, we will tacitly assume that the value of the constant $c(\kappa)>0$ can vary along the proof, as long as it always depends only on $\kappa$ and $n$, but not on $w$. 

Using \eqref{eq:exp_vol}--\eqref{eq:dirichlet_energy_fig_expansion_trivial_bound} together with the algebraic inequality  $(1+t)^{\frac{n}{n-1}}\geq 1+\tfrac{n}{n-1}t,$ valid for $t\geq 0$,
we obtain
\begin{align}\label{eq:basic_ineq}
\begin{split}
\frac{3}{2}
\mathcal E_{n-1}(\mathrm{id}_\S+w)&\geq \big[\mc D_{n-1}(\mathrm{id}_{\S}+w)\big]^{\frac{n}{n-1}}-\mc V_n(\mathrm{id}_{\S}+w)\\
&
\geq
(1-\kappa)
\widetilde Q_n(w) 
+ c(\kappa) \fint_{\mathbb S^{n-1}}|\nabla_T w|^{n-1} 
 - \frac{\kappa n}{2}
\fint_{\mathbb S^{n-1}}\langle w,A(w)\rangle
\\
&\quad
-\left[\sum_{k=3}^{n-1}\frac{n}{k}\fint_{\mathbb S^{n-1}}\langle w,[\sigma_k'(\nabla_T w P_T^t)]^t x\rangle+\mc V_n(w) \right]\,,
\end{split}
\end{align}
where we have set
\begin{align}\label{eq:tildeQn}
\begin{split}
\widetilde Q_n(w)
&
:=
\frac 12 \frac{n}{n-1}\fint_{\mathbb S^{n-1}} |\nabla_T w|^2 - \frac{n}{2}\fint_{\S} \langle w,A(w)\rangle 
\\
&
\qquad
+\frac{n(n-3)}{2(n-1)}\fint_{\mathbb S^{n-1}}|N(P_T,\nabla_T w)|^{n-3}(|P_T+\nabla_T w|-|P_T|)^2\,,
\end{split}
\end{align}
with $N$ as in \eqref{eq:nonlinear_extra_termapplication_of_figalli_zhang}
 and $A$ as in \eqref{eq:opearator_A}.   
Recalling also the definition of $Q_n$ in \eqref{eq:Qn}, note that 
\begin{align*}
\widetilde Q_n(w)=Q_n(w)+R_n(w)\,,
\end{align*}
with $R_n$ given by
\begin{align*}
R_n(w)
&
:=\frac{n(n-3)}{2(n-1)}\fint_{\mathbb S^{n-1}}
\Big(
|N(P_T,\nabla_T w)|^{n-3}(|P_T+\nabla_T w|-|P_T|)^2
-\frac{(\mathrm{div}_{\S}w)^2}{n-1}
\Big)\,.
\end{align*}
We now split the rest of the proof into two steps.

\medskip
\textit{Step 1: Estimating the last term in \eqref{eq:basic_ineq}.} 
The volume term can be simply estimated by the isoperimetric inequality of Proposition \ref{prop:isop}:
\begin{align*}
\mc V_n(w)
\leq 
\Big[\mc D_{n-1}(w)\Big]^{\frac{n}{n-1}} \sim \bigg[\fint_\S |\n_T w|^{n-1}\bigg]^{\frac{n}{n-1}} \leq \frac{c(\kappa)}{2} \fint_\S |\n_T w|^{n-1}\,,
\end{align*}
if $\theta_n\in (0,1)$ is chosen sufficiently small. 
Thus, after redefining $c(\kappa)$, we get
\begin{align}\label{eq:lower_ineq_1}
\begin{split}
\frac{3}{2}
\mathcal E_{n-1}(\mathrm{id}_\S+w)
&
\geq
(1-\kappa)
\widetilde Q_n(w) 
+  c(\kappa) \fint_{\mathbb S^{n-1}}|\nabla_T w|^{n-1} 
-
\frac{\kappa n}{2} \fint_{\mathbb S^{n-1}} \langle w,A(w)\rangle  \\
&
\quad
-\sum_{k=3}^{n-1}\frac{n}{k}\fint_{\mathbb S^{n-1}}\langle w,[\sigma_k'(\nabla_T w P_T^t)]^tx\rangle\,.
\end{split}
\end{align}
To estimate the  last  sum, we claim that, with $\alpha_n:= \frac{1}{2(n-1)}>0$ and $\beta_n:=\frac{n+1}{n}\alpha_n>0$,
for all $k\in\lbrace 3,\ldots, n-1\rbrace$ 
we have
\begin{align}
\label{eq:interpolation}
\begin{split}
\fint_{\mathbb S^{n-1}}\langle w,[\sigma_k'(\nabla_T w P_T^t)]^t x\rangle & \lesssim
\int_{\mathbb S^{n-1}}|w|\, |\nabla_T w|^{ k-1}\\
& \lesssim \Big(\fint_\S |\n_T w|^2\Big)^{1+\alpha_n} + \Big(\fint_\S |\n_T w|^{n-1}\Big)^{1+\beta_n}\,,
\end{split}
\end{align}
where the first inequality is simply \eqref{eq:trivial_estimate_for_intermediate_terms}. In fact,  as we will now see, \eqref{eq:interpolation} holds for any $w$ satisfying $\fint_{\S} w=0$, which in particular is ensured by  \eqref{eq:condw}. 
The proof of \eqref{eq:interpolation} is an application of Sobolev and H\"older's inequalities with appropriate choices of exponents. 
First, by H\"older's inequality we have
\begin{align*}
\fint_{\mathbb S^{n-1}}|w|\, |\nabla_T w|^{ k-1}
\leq \|w\|_{L^{2n}(\S)} \Big( \fint_{\S} 
|\nabla_T w|^{\gamma}\Big)^{\frac{2n-1}{2n}}\,,\ \ \gamma :=\frac{2n (k-1) }{2n-1}\in (2,n-1)\,.
\end{align*}
To estimate the first factor on the right hand side, we use the Sobolev and Hölder's inequalities once again, to obtain 
\[\|w\|_{L^{2n}(\S)}\lesssim\|\nabla_T w\|_{L^{p_{n-1}}(\S)}\leq \|\nabla_T w\|_{L^{n-1}(\S)}\,,\]
where $p_{n-1} \in (1,n-1)$ is such that $p^*_{n-1}:=\frac{(n-1)p_{n-1}}{n-1-p_{n-1}}=2n$.
In order to estimate the second factor, we use the simple algebraic inequality 
\[2\leq \gamma\leq n-1\implies |\nabla_T w|^\gamma\leq |\nabla_Tw|^2 +|\nabla_Tw|^{n-1} \ \ \haus\text{-a.e. on } \S\,.\]
Hence, using also that $$(s+t)^a\leq s^a+t^a \quad \forall s,t\geq 0, a\in[0,1]\,,$$ 
we deduce 
\begin{align*}
\fint_{\mathbb S^{n-1}}|w|\, |\nabla_T w|^{k-1}
&
\lesssim \|\nabla_T w\|_{L^{n-1}(\S)} \Big( \fint_{\S} 
(|\n_T w|^2+|\n_T w|^{n-1})\Big)^{\frac{2n-1}{2n}}\,\\[3pt]
&\lesssim \|\nabla_T w\|_{L^{n-1}(\S)}
\Big( \|\nabla_T w\|_{L^2(\S)}^{\frac{2n-1}{n}}
+\|\nabla_T w\|_{L^{n-1}(\S)}^{(n-1)\frac{2n-1}{2n}}\Big)
\\[3pt]
&
\lesssim \|\nabla_T w\|_{L^{n-1}(\S)}^{n-\frac{n-1}{2n}} +
\|\nabla_T w\|_{L^{n-1}(\S)} \|\nabla_T w\|_{L^{2}(\S)}^{\frac{2n-1}{n}}\,.
\end{align*}
Finally, using Young's inequality to estimate the last term, we obtain
\begin{align*}
\int_{\mathbb S^{n-1}}|w|\, |\nabla_T w|^{ k-1}
&
\lesssim \|\nabla_T w\|_{L^{n-1}(\S)}^{n-\frac{n-1}{2n}} 
+ \|\nabla_T w\|_{L^{2}(\S)}^{\frac{2n-1}{n-1}}\,,
\end{align*}
since $\|\n_T w\|_{L^{n-1}(\S)}^n\leq \|\n_T w\|^{n-\frac{n-1}{2n}}_{L^{n-1}(\S)}$ by  \eqref{eq:condw}; this last estimate is precisely \eqref{eq:interpolation}.
Returning to \eqref{eq:lower_ineq_1}, and since $\alpha_n,\beta_n>0$,  by choosing $\theta_n\in (0,1)$ smaller if necessary and redefining $c(\kappa)$, \eqref{eq:interpolation} yields
\begin{align}\label{eq:basic_ineq_3}
\begin{split}
\frac{3}{2}\mathcal E_{n-1}(\mathrm{id}_\S+w)
&
\geq
(1-\kappa)
\widetilde Q_n(w) 
+  c(\kappa) \fint_{\mathbb S^{n-1}}|\nabla_T w|^{n-1} 
\\
&
\quad
 -\frac{\kappa n}{2} \fint_{\mathbb S^{n-1}} \langle w,A(w)\rangle
-c_n \Big( \fint_{\mathbb S^{n-1}} |\nabla_T w|^2  \Big)^{1+\alpha_n},
\end{split}
\end{align}
for any map $w$ satisfying \eqref{eq:condw}. 

Before continuing with the second part of the proof, let us remark that the choice of the exponent $\alpha_n:=\frac{1}{2(n-1)}$ in \eqref{eq:interpolation} is arbitrary, in the sense that it could be replaced by any exponent $\alpha\in(0,\frac{1}{n-1})$, up to adjusting the value of the multiplicative constant in front of the right hand side of the estimate. Indeed, one could alternatively apply the Sobolev inequality for any $p\in (1,n-1)$ in the argument below \eqref{eq:interpolation} and adjust the exponents with which Hölder's and Young's inequalities are applied afterwards. 

\medskip
\textit{Step 2:  Stability estimate for $\tilde Q_n$.}
From the estimate \eqref{eq:basic_ineq_3} we see that, if the linear stability estimate \eqref{eq:lin_stab} is still valid when $Q_n$ is replaced by the nonlinear form $\widetilde Q_n$ with a possibly smaller $c_n>0$ and for maps satisfying \eqref{eq:condw},  
then for small enough $\theta_n, \kappa\in (0,1)$ the terms in the last line of \eqref{eq:basic_ineq_3} 
can be absorbed into $(1-\kappa)\widetilde Q_n(w)$ and this would conclude the proof of \eqref{eq:nonlin_stab}; compare with \cite[Proposition~3.8(iii)]{figalli2022sobolev}. In fact, it may be that  $\widetilde Q_n$ does not satisfy such an estimate,  but  a weaker estimate suffices: it is sufficient to prove that, for any given $C,\alpha>0$
and
$c \in \R$
 with $|c|$ 
 small enough,
there exists $\theta:=\theta(n,C,\alpha,c)>0$,  such that
\begin{align}\label{eq:nonlin_spectral_gap}
&
\widetilde Q_n(w) 
\geq c  \fint_{\mathbb S^{n-1}}\langle w,A(w)\rangle+C\Big( \fint_{\mathbb S^{n-1}}|\nabla_T w|^2\Big)^{1+\alpha}, 
\end{align}
whenever \eqref{eq:condw} holds with $\theta$ in the place of $\theta_n$.
Recall again here that $A$ 
is the first order differential operator defined in \eqref{eq:opearator_A}.
 Indeed, once the above estimate is shown, we can apply it for 
\[c:=
\frac{\kappa n}{2(1-\kappa)}\,, \quad C:=\frac{c_n}{{1-\kappa}}>0\,, \quad \alpha:=\alpha_n=\frac{1}{2(n-1)}\]
for $\kappa\in(0,1)$ sufficiently small, and $\theta_n\in (0,1)$ small accordingly, and combining it with \eqref{eq:basic_ineq_3}, we are led to \eqref{eq:nonlin_stab}.

Thus it remains to  prove \eqref{eq:nonlin_spectral_gap}, and for that we argue by contradiction: for fixed $C,\alpha>0$ and
$c \in \R$ with $|c|\in(0,1)$ sufficiently small,
 assume that the estimate fails. In this case,  there exists a sequence $(w_j)_{j\in \N}\subset W^{1,n-1}(\S;\R^n)$ satisfying \eqref{eq:condw}, such that
\begin{gather}\label{eq:absurd_assumption}
\begin{split}
\delta_j& := \fint_{\mathbb S^{n-1}}|\nabla_T w_j|^{n-1} \to 0 \ \ \text{as } j\to \infty\,,
\\
\widetilde Q_n(w_j) 
& < c  \fint_{\mathbb S^{n-1}}\langle w_j,A(w_j)\rangle
+C\Big( \fint_{\mathbb S^{n-1}}|\nabla_T w_j|^2\Big)^{1+\alpha}\,.
\end{split}
\end{gather}
Recalling the definition of $\widetilde Q_n$ in  \eqref{eq:tildeQn}, the latter inequality is equivalent to
\begin{align}\label{eq:absurd_assumption_2}
\begin{split}
&
\frac 12 \frac{n}{n-1}\fint_{\mathbb S^{n-1}} |\nabla_T w_j|^2
+
\Xi_n(w_j)\\
&\qquad <\Big(\frac{n}{2}+c\Big)\fint_{\mathbb S^{n-1}}
\langle w_j,A(w_j)\rangle
+C\Big( \fint_{\mathbb S^{n-1}}|\nabla_T w_j|^2\Big)^{1+\alpha},
\end{split}
\end{align}
where
\begin{align}\label{eq:Xi_n}
\Xi_n(w)
&
:=
\frac{n(n-3)}{2(n-1)}\fint_{\mathbb S^{n-1}}|N(P_T,\nabla_T w)|^{n-3}(|P_T+\nabla_T w|-|P_T|)^2\,,
\end{align}
and $N$ as in \eqref{eq:nonlinear_extra_termapplication_of_figalli_zhang} Now let 
\begin{align*}
\e_j:=\Big( \fint_{\mathbb S^{n-1}}|\nabla_T w_j|^2\Big)^{\frac 12} 
\leq \delta_j^{\frac{1}{n-1}}\to 0 \ \ \text{as } \ j\to \infty\,,
\ \ \hat w_j:=\frac{w_j}{\e_j}\,,
\end{align*}
so that $\fint_\S |\nabla_T \hat w_j|^2=1$. Rewriting 
 \eqref{eq:absurd_assumption_2} in terms also of $\hat w_j$, we get 
\begin{align}\label{eq:absurd_assumption_3}
&\frac 12 \frac{n}{n-1}
+\frac{\Xi_n(w_j)}{\e_j^2}
<\Big(\frac{n}{2}+c\Big)\fint_{\mathbb S^{n-1}}
\langle \hat w_j,A(\hat w_j)\rangle 
+C\,  \e_j^{2\alpha}\,.
\end{align}
Extracting a subsequence,  we can assume
$\hat w_j\rightharpoonup \hat w$ weakly in $W^{1,2}(\S ;\R^n )$, so $\hat w_j\to \hat w$ strongly in $L^2(\S ;\R^n )$, where $\hat w\in W^{1,2}(\S ;\R^n )$ also satisfies 
\begin{equation}\label{eq:normalization_for_w}
\fint_{\S}\hat w=0\,, \quad \ \Pi_{n,0}\hat w=0\,.	
\end{equation}
In particular, taking the liminf as $j\to \infty$ in  \eqref{eq:absurd_assumption_3}, we deduce that
\begin{align}\label{eq:absurd_assumption_4}
\frac 12 \frac{n}{n-1}+\liminf_{j\to +\infty}
\Bigg[\frac{\Xi_n(w_j)}{\e_j^2}
\Bigg]
\leq \Big(\frac{n}{2}+c\Big)
\fint_{\mathbb S^{n-1}} \langle \hat w,A(\hat w)\rangle\,.
\end{align}
Since $\Xi_n\geq 0$, the left-hand side of \eqref{eq:absurd_assumption_4} is bounded from below by $n/(2n-2)$, so we deduce in particular that 
\begin{align}\label{eq:hatw_nontrivial}
\fint_{\mathbb S^{n-1}}|\nabla_T \hat w|^2 > 0\,.
\end{align}

We now claim that
\begin{align}\label{eq:liminf_quad}
\begin{split}
&
\frac 12 \frac{n}{n-1}+\liminf_{j\to +\infty}
\Bigg[\frac{\Xi_n(w_j)}{\e_j^2}
\Bigg]
\geq
\frac 12 \frac{n}{n-1}\fint_{\mathbb S^{n-1}} |\nabla_T \hat w|^2
+\frac{n(n-3)}{2(n-1)^2}
\fint_{\mathbb S^{n-1}}
(\mathrm{div}_{\S}\hat w)^2\,.
\end{split}
\end{align}
Once this is shown,  and recalling the definition \eqref{eq:Qn}
 of $Q_n$, we see that \eqref{eq:absurd_assumption_4} implies 
\begin{align*}
Q_n(\hat w)\leq c\fint_{\mathbb S^{n-1}}\langle \hat w,A(\hat w) \rangle
\lesssim 
|c| 
\fint_{\S}|\nabla_T\hat w|^2\,,
\end{align*}
the last estimate following from the Cauchy-Schwarz inequality, \eqref{eq:opearator_A},  and the Poincar{\' e} inequality on $\S$ for functions with zero mean, which is applicable by \eqref{eq:normalization_for_w}.
But according to \eqref{eq:hatw_nontrivial} $\hat w$ is a nontrivial map satisfying \eqref{eq:normalization_for_w}, so this last estimate is a clear contradiction to the coercivity estimate \eqref{eq:lin_stab} for small enough  $|c|>0$.

Therefore, to complete the proof of \eqref{eq:nonlin_spectral_gap} 
it suffices to prove \eqref{eq:liminf_quad}.
By the fundamental theorem of calculus, we have that $\haus$-a.e. on $\S$,
\begin{align*}
\frac{|P_T+\nabla_T w_j|-|P_T|}{\e_j}
=\int_0^1 
\frac{P_T +t\nabla_T w_j}{|P_T+t\nabla_T w_j|}\colon \nabla_T  \hat w_j
\d t
=f_j +g_j\,,
\end{align*}
where
\begin{align*}
f_j & :=
\int_0^1 
\frac{P_T +t\nabla_T w_j}{|P_T+t\nabla_T w_j|}\colon\nabla_T \hat w
\d t\,,
\\[3pt]
g_j
&:=
\int_0^1 
\frac{P_T +t\nabla_T w_j}{|P_T+t\nabla_T w_j|}\colon(\nabla_T \hat w_j-\nabla_T \hat w)
\d t\,.
\end{align*}
Recall by \eqref{eq:absurd_assumption} that  $\nabla_T w_j\to 0$ strongly in $L^{n-1}(\S)$, hence also $\haus$-a.e. up to a non-relabeled subsequence, and therefore for every $t\in[0,1]$
\begin{align*}
\frac{P_T +t\nabla_T w_j}{|P_T+t\nabla_T w_j|}
\to \frac{P_T}{|P_T|}\quad \haus\text{-a.e.\ on } \S.
\end{align*}
By the dominated convergence theorem and the fact that $\hat w_j\rightharpoonup \hat w$ in $W^{1,2}(\S;\R^n)$, we deduce
\begin{align*}
&f_j\to \frac{P_T\colon\nabla_T\hat w}{|P_T|}=\frac{\mathrm{div}_{\S}\hat w}{\sqrt{n-1}}\ 
\text{ strongly in }L^2(\S)\,,
\quad
g_j
\rightharpoonup 0 \text{ weakly in }L^2(\S)\,.
\end{align*}
Recalling the definition \eqref{eq:Xi_n} of $\Xi_n$, we have
\begin{align*}
\frac{\Xi_n(w_j)}{\e_j^2}
&
=\frac{n(n-3)}{2(n-1)}\fint_{\mathbb S^{n-1}}
|N(P_T,\nabla_T w_j)|^{n-3}(f_j+g_j)^2 \\
&
\geq
\frac{n(n-3)}{2(n-1)}\fint_{\mathbb S^{n-1}}
|N(P_T,\nabla_T w_j)|^{n-3}f_j^2
+
\frac{n(n-3)}{n-1}\fint_{\mathbb S^{n-1}}
|N(P_T,\nabla_T w_j)|^{n-3}f_j g_j\,.
\end{align*}
Moreover, recalling  \eqref{eq:nonlinear_extra_termapplication_of_figalli_zhang}, we have 
\[1\geq |N(P_T,\nabla_T w_j)|\to 1 \ \haus\text{-a.e. on }\S\,,\]
so that by the dominated convergence theorem and the above convergences of $f_j$ and $g_j$ we infer
\begin{align*}
\liminf_{j\to +\infty} \frac{\Xi_n(w_j)}{\e_j^2} \geq 
\frac{n(n-3)}{2(n-1)^2}
\fint_{\mathbb S^{n-1}} (\mathrm{div}_{\S}\hat w)^2\,.
\end{align*}
This inequality,  combined with the weak lower semicontinuity of the Dirichlet energy, implies \eqref{eq:liminf_quad}, thus concluding the proof of the proposition.
\end{proof}

The proof of our main theorem follows now easily by combining all the above auxiliary steps.

\begin{proof}[Proof of Theorem \ref{thm:qttiveintro}]
In view of Lemma \ref{from_local_to_global}, it suffices to prove the theorem for maps $u\in \mathcal{B}_{\delta_n,\varepsilon_n}$ as in \eqref{eq:local_family}, for $\delta_n,\varepsilon_n\in (0,1)$ sufficiently small to be chosen in the sequel, so that also Lemma \ref{fixingMobius} is applicable. Then, by Lemma \ref{lem:w} the map $w\in W^{1,n-1}(\S;\R^n)$ defined in \eqref{eq:correct_w} satisfies the hypotheses of Proposition \ref{prop:local_nonlinear_stability}, for some $\theta_n\in (0,1)$ that can be chosen sufficiently small depending on $\delta_n$. In particular, by the scaling, translation, and conformal invariance of the deficit $\mc E_{n-1}$,  \eqref{eq:correct_w} and \eqref{eq:nonlin_stab}, we obtain
\begin{align}\label{eq:almost_final_nonlinear}
\begin{split}
\mc E_{n-1}(u)& =\mc E_{n-1}(\mathrm{id}_{\S}+w)\\
&\geq \tilde c_n\fint_{\S}|\nabla_T w|^{n-1}= \tilde c_n\fint_{\S}\bigg|\frac{1}{\lambda_{u,\psi}}\nabla_T (u\circ\psi)-P_T\bigg|^{n-1}\,,
\end{split}
\end{align}
so that the only thing that remains to be justified is why we can replace $\lambda_{u,\psi}$ with $[\mc V_n(u)]^{\frac{1}{n}}$ in \eqref{eq:almost_final_nonlinear}. In that respect, we prove that 
\begin{equation}\label{eq:lambda_vs_vol}
\bigg|\frac{1}{[\mc V_n(u)]^{\frac{1}{n}}}-\frac{1}{\lambda_{u,\psi}}\bigg|\lesssim \big[\mc E_{n-1}(u)\big]	^{\frac{1}{n-1}}\,.
\end{equation}
Once \eqref{eq:lambda_vs_vol} is established, the assertion follows by setting $\phi:=\psi^{-1}\in \Mob_+(\mb S^{n-1})$, so that using the conformal invariance of the $(n-1)$-Dirichlet energy and \eqref{eq:almost_final_nonlinear},  we get
\begin{align*}
\fint_{\S}\bigg|\frac{\nabla_T u}{[\mc V_n(u)]^{\frac{1}{n}}}-\nabla_T\phi\bigg|^{n-1}&\lesssim \bigg|\frac{1}{[\mc V_n(u)]^{\frac{1}{n}}}-\frac{1}{\lambda_{u,\psi}}\bigg|^{n-1}\fint_{\S}|\nabla_T u|^{n-1}+\fint_{\S}\bigg|\frac{\nabla_T (u\circ \psi)}{\lambda_{u,\psi}}-P_T\bigg|^{n-1}\\[2pt]
&\lesssim \mc E_{n-1}(u)\,,
\end{align*} 	
which proves \eqref{eq:qttive}. 

Thus, as a final step, we prove \eqref{eq:lambda_vs_vol}.  For simplicity, let us write $\tilde u := (u \circ \psi)/\lambda_{u,\psi}$. We recall that, for $a,b\in \R^m$, by convexity of the function $a\mapsto |a|^{n-1}$ we have the local Lipschitz estimate
\begin{align*}
\big||a|^{n-1}-|b|^{n-1}\big|\lesssim (|a|^{n-2}+|b|^{n-2})|a-b|\,.
\end{align*}
Applying this inequality with $a:=\nabla_T \tilde u$ and $b:=P_T$ ($\haus$-a.e. on $\S$), and using \eqref{eq:ucompphi_zero_proj},  \eqref{eq:local_family}(iii) and \eqref{eq:almost_final_nonlinear}, we obtain
\begin{align}\label{eq: lambda_vs_dirichlet}
\begin{split}	
\big|\mc D_{n-1}(\tilde u)-1\big|&\sim \bigg|\fint_{\S}\Big(\big|\nabla_T\tilde u \big|^{n-1}-|P_T|^{n-1}\Big)\bigg|\\
&\lesssim \fint_{\S}\left(|\nabla_T\tilde u|^{n-2}+|P_T|^{n-2}\right)\big|\nabla_T\tilde u-P_T\big|\\
&\lesssim \left(\|\nabla_T\tilde u\|^{n-2}_{L^{n-1}(\S)}+1\right)\big\|\nabla_T\tilde u-P_T\big\|_{L^{n-1}(\S)}\\
&\lesssim_{\tilde \delta_n} [\mc E_{n-1}(u)]^{\frac{1}{n-1}}\lesssim_{\tilde \delta_n} \varepsilon_n^{\frac{1}{n-1}}\leq \frac{1}{2}\,.
\end{split}
\end{align} 
Since the function $t\mapsto t^\frac{1}{n-1}$ is smooth around $t=1$, \eqref{eq: lambda_vs_dirichlet} in particular implies
\begin{equation*}
\left|[\mc D_{n-1}(\tilde u)]^{\frac{1}{n-1}}-1\right|\lesssim \left|\mc D_{n-1}(\tilde u)-1\right|\lesssim [\mc E_{n-1}(u)]^{\frac{1}{n-1}}\,.
\end{equation*}
Taking $\delta_n\in(0,1)$ sufficiently small, we can of course ensure that $\mc D_{n-1}(u)\in [\tfrac 1 2, \tfrac 3 2]$,  so that
\begin{align}\label{eq: lambda_vs_dirichlet_2}
\begin{split}
\bigg|\frac{1}{\lambda_{u,\psi}}-\frac{1}{[\mc V_{n}(u)]^{\frac{1}{n}}(1+\mc E_{n-1}(u))^{\frac{1}{n}}}\bigg| & =
\bigg|\frac{1}{\lambda_{u,\psi}}-\frac{1}{[\mc D_{n-1}(u)]^{\frac{1}{n-1}}}\bigg| \\
& \sim \left|[\mc D_{n-1}(\tilde u)]^{\frac{1}{n-1}}-1\right| \lesssim [\mc E_{n-1}(u)]^{\frac{1}{n-1}}\,,
\end{split}
\end{align}	
since $\lambda_{u,\psi}^{n-1} \mc D_{n-1}(\tilde u ) = \mc D_{n-1}(u)$.
Notice that,  similarly to \eqref{eq:vol_trivial_bound}, we also have $\mc V_n(u) \in [\tfrac 1 2, \tfrac 3 2]$, provided that 
$\delta_n \in (0,1)$ 
is sufficiently small: indeed, this follows easily from  definition \eqref{eq:correct_w} and estimate \eqref{eq:lambda_estimate}. Hence, by elementary analysis, we obtain
\begin{align}\label{eq: lambda_vs_dirichlet_3}
\frac{1}{[\mc V_{n}(u)]^{\frac{1}{n}}}\Bigg|\frac{1}{(1+\mc E_{n-1}(u))^{\frac{1}{n}}}-1\Bigg|\lesssim (1+\mc E_{n-1}(u))^{\frac{1}{n}}-1\lesssim \mc E_{n-1}(u)\,. 
\end{align}	 
Combining \eqref{eq: lambda_vs_dirichlet_2} and \eqref{eq: lambda_vs_dirichlet_3}, we arrive at \eqref{eq:lambda_vs_vol}, which completes the proof. \qedhere
\end{proof}

\appendix

\section{A pointwise characterization of the local topological degree}\label{a:deg}

In this appendix we prove the following.

\begin{proposition}\label{p:deg_uy}
Let
$U\in W^{1,n}(\mb B^n;\R^n)$ such that $u:=U|_{\S}\in W^{1,n-1}(\S;\R^n)$.
Then
\begin{align*}
u_y := \frac{u-y}{|u-y|}\in \mathrm{VMO}(\mathbb S^{n-1};\mathbb S^{n-1})
\quad\text{for $\L^n$-a.e. }y\in\R^n,
\end{align*}
and
\begin{align}\label{eq:degUuy}
\deg(U,\B;y)=\deg(u_y,\S;\S)\quad\text{for $\L^n$-a.e. }y\in\R^n,
\end{align}
where the local degree $\deg(U,\B;y)$ is defined by \eqref{eq:general_degreee} and $\deg(u_y,\S;\S)$ is the $\mathrm{VMO}$-degree defined in \cite[Section~I.3]{Brezis1995}.
\end{proposition}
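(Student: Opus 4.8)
The goal is to upgrade the distributional identity \eqref{eq:deg=jac} into a pointwise statement relating $\deg(U,\B;\cdot)$ to the VMO-degree of the radial projections $u_y$. First I would address the VMO regularity claim. Since $U\in W^{1,n}(\B;\R^n)\subset \mathrm{VMO}(\B;\R^n)$ and $u=U|_{\S}\in W^{1,n-1}(\S;\R^n)$, the composition with $z\mapsto (z-y)/|z-y|$ is smooth and Lipschitz on any region bounded away from $y$. The issue is only the set where $u$ gets close to $y$. The key elementary fact is that for $\L^n$-a.e.\ $y\in\R^n$ the function $x\mapsto |u(x)-y|^{-1}$ lies in $L^{n-1}(\S)$: indeed, by Fubini and the coarea/layer-cake formula,
\begin{align*}
\int_{\R^n}\int_{\S}\frac{1}{|u(x)-y|^{n-1}}\,\d\haus(x)\,\d y
= \int_{\S}\Big(\int_{\R^n}\frac{\d y}{|u(x)-y|^{n-1}}\Big)\d\haus(x),
\end{align*}
which fails to be finite globally but is finite once one restricts $y$ to a bounded set and splits the inner integral into $|u(x)-y|\le 1$ and $|u(x)-y|>1$; more carefully, one uses that $\int_{\{|z|\le R\}}|z|^{-(n-1)}\,\d z<\infty$ to conclude $|u-y|^{-1}\in L^{n-1}(\S)$ for a.e.\ $y$. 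Then $u_y\in W^{1,n-1}(\S;\S)$ with $|\nabla_T u_y|\lesssim |u-y|^{-1}|\nabla_T u|$, and since $W^{1,n-1}(\S;\S)\hookrightarrow \mathrm{VMO}(\S;\S)$ (as $n-1$ is the critical exponent on the $(n-1)$-sphere), $u_y\in\mathrm{VMO}(\S;\S)$ for a.e.\ $y$. This part is routine but needs the a.e.-in-$y$ Fubini argument carried out with care.

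Next I would establish \eqref{eq:degUuy}. Following \cite{Brezis1996}, the identity holds for all $y\notin\overline{U(\S)}$; the task is to extend it to a.e.\ $y$. The natural route, as hinted by the citations to \cite{JerrardSoner2002,CanevariOrlandi2019}, is approximation. Take a smooth approximating sequence $(U_j)_{j\in\N}\subset C^\infty(\overline\B;\R^n)$ as in \eqref{eq:approx}, with $U_j\to U$ strongly in $W^{1,n}(\B)$, $u_j:=U_j|_{\S}\to u$ strongly in $W^{1,n-1}(\S)$, and both convergences holding $\L^n$- resp.\ $\haus$-a.e. For each fixed $j$ and a.e.\ $y$ (regular value of $U_j$ not in $U_j(\S)$), the classical identity $\deg(U_j,\B;y)=\deg((u_j)_y,\S;\S)$ holds by the smooth theory. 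On the left, $T_{U_j}\wstar T_U$ in $\mathrm{BV}_{\loc}(\R^n)$ and $T_{U_j}\to T_U$ in $L^1_{\loc}(\R^n)$ by Lemma~\ref{lem:TUBV}, so (after passing to a subsequence) $\deg(U_j,\B;y)\to\deg(U,\B;y)$ for a.e.\ $y$. On the right, I would show that for a.e.\ $y$ one has $(u_j)_y\to u_y$ in $\mathrm{VMO}(\S;\S)$, or at least in $W^{1,n-1}(\S;\S)$, which by the continuity of the VMO-degree under BMO-convergence (\cite[Property~3]{Brezis1995}) gives $\deg((u_j)_y,\S;\S)\to\deg(u_y,\S;\S)$. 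The $W^{1,n-1}$ convergence of $(u_j)_y$ to $u_y$ for a.e.\ $y$ follows from the chain rule estimate $|\nabla_T(u_j)_y-\nabla_T u_y|\lesssim |u_j-y|^{-1}|\nabla_T u_j-\nabla_T u| + (\text{lower order in }u_j-u)$ combined with a uniform-in-$j$ bound $\sup_j\||u_j-y|^{-1}\|_{L^n(\S)}<\infty$ valid for a.e.\ $y$ (again a Fubini argument, now using $\sup_j\|u_j\|_{W^{1,n-1}(\S)}<\infty$ and, say, that the family $u_j$ is equi-integrable so the measures $(u_j)_\#(\haus\mres\S)$ don't concentrate — alternatively, pass to a further subsequence so that $\sum_j \||u_j-y|^{-1}\|_{L^n}^{-c}$-type sums converge for a.e.\ $y$).

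Equating the two limits yields \eqref{eq:degUuy} for a.e.\ $y$, completing the proof. \textbf{The main obstacle} I anticipate is the a.e.-in-$y$ control that makes $(u_j)_y\to u_y$ converge in the right topology: one must rule out, for a.e.\ fixed $y$, that the approximants $u_j$ concentrate mass near $y$ in a way that destroys the uniform $L^n$ (or $L^{n-1}$) bound on $|u_j-y|^{-1}$. This is exactly where the hypothesis $u\in W^{1,n-1}(\S;\R^n)$ (as opposed to merely $U\in W^{1,n}(\B)$) is used — it gives $u\in W^{1-1/n,n}(\S;\R^n)$, hence $u_\#(\haus\mres\S)$ is absolutely continuous with respect to a sufficiently regular measure — and the argument should be organized so that this integrability is inherited uniformly along the approximating sequence, perhaps by choosing the $U_j$ via mollification so that $\|u_j\|_{W^{1,n-1}(\S)}$ and the associated pushforward densities are uniformly controlled. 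Everything else is a combination of the distributional identity \eqref{eq:deg=jac}, the BV-compactness of Lemma~\ref{lem:TUBV}, and the stability of the VMO-degree.
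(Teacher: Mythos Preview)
Your overall strategy---reduce to smooth $U_j$, use Lemma~\ref{lem:TUBV} for the left-hand side, and stability of the VMO-degree for the right-hand side---matches the paper exactly, and your Fubini argument for $u_y\in W^{1,n-1}(\S;\S)$ is the right one (with the gradient weight $|\nabla_T u|^{n-1}$ included in the integrand, as the paper does).

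The gap is precisely the point you flag as the ``main obstacle'': a uniform-in-$j$ bound $\sup_j\||u_j-y|^{-1}\|_{L^{n-1}(\S)}<\infty$ for $\L^n$-a.e.\ $y$ does \emph{not} follow from Fubini, since the bound for each fixed $j$ says nothing about the supremum, and your suggested fixes (equi-integrability of pushforwards, Borel--Cantelli-type sums) are not how this is resolved. The paper sidesteps the uniform bound entirely by proving a \emph{continuity} statement for the nonlinear map
\[
\Phi\colon W^{1,n-1}(\S;\R^n)\to L^1_{\loc}\big(\R^n;W^{1,n-1}(\S;\S)\big),\qquad u\mapsto(y\mapsto u_y),
\]
i.e.\ one shows directly that
\[
\int_{|y|\le R}\int_{\S}|\nabla_T(u_j)_y-\nabla_T u_y|^{n-1}\,\d\haus\,\d y\to 0.
\]
The point is that integrating in $y$ \emph{first} converts the problematic factor $|u_j-y|^{-(n-1)}$ into the harmless quantity $\sup_z\int_{|y|\le R}|z-y|^{-(n-1)}\,\d y\lesssim R$, uniformly in $j$; the remaining terms are handled by dominated convergence with a cutoff $P_{y,\delta}$ near the singularity. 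From the $L^1_{\loc}$-in-$y$ convergence one extracts, via Fatou, a subsequence along which $(u_{j'})_y\to u_y$ in $W^{1,n-1}(\S;\S)$ for a.e.\ $y$, and the VMO-degree stability finishes the argument. So the fix to your obstacle is not a uniform pointwise-in-$y$ estimate but an integrated one.
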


The main tools to prove Proposition~\ref{p:deg_uy} are the properties of the local degree from Subsection~\ref{sec:degree} and of the $\mathrm{VMO}$-degree from \cite{Brezis1995}, together with the following lemma.

\begin{lemma}\label{l:uy}
For any $u\in W^{1,n-1}(\mathbb S^{n-1};\R^n)$ we have
\begin{align}\label{eq:uysob}
u_y = \frac{u-y}{|u-y|}\in W^{1,n-1}(\mathbb S^{n-1};\mathbb S^{n-1})
\quad\text{for $\L^n$-a.e. }y\in\R^n,
\end{align}
and the mapping
\begin{align}\label{eq:Phi}
\Phi\colon W^{1,n-1}(\mathbb S^{n-1};\R^n)
&
\to
L^1_{\mathrm{loc}}(\R^n;W^{1,n-1}(\mathbb S^{n-1},\mathbb S^{n-1}))\,,
\nonumber
\\
u & \mapsto \left( y\mapsto u_y \right)
\end{align}
is continuous.
\end{lemma}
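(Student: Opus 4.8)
The plan is to prove Lemma~\ref{l:uy} in two stages: first the pointwise Sobolev regularity \eqref{eq:uysob} for a.e.\ $y$, and then the continuity of $\Phi$ in \eqref{eq:Phi}. For the first stage, the key observation is that $u_y$ depends on $y$ only through the ``radial projection'' $\pi_y\colon z\mapsto (z-y)/|z-y|$, which is smooth away from $z=y$. Since $u\in W^{1,n-1}(\S;\R^n)$, the chain rule gives, at least formally, $|\nabla_T u_y|\lesssim |u-y|^{-1}|\nabla_T u|$, so that
\begin{align*}
\int_{\R^n}\left(\fint_{\S}|\nabla_T u_y|^{n-1}\,\d\haus\right)\frac{\d y}{(1+|y|)^{n+1}}
\lesssim \fint_{\S}|\nabla_T u|^{n-1}\left(\int_{\R^n}\frac{\d y}{|u(x)-y|^{n-1}(1+|y|)^{n+1}}\right)\d\haus\,,
\end{align*}
and the inner integral is finite and bounded uniformly in $x$ because near $y=u(x)$ the singularity $|u(x)-y|^{-(n-1)}$ is locally integrable in $\R^n$ (as $n-1<n$) while the weight $(1+|y|)^{-(n+1)}$ controls the integral at infinity. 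Hence $y\mapsto \|u_y\|_{W^{1,n-1}(\S)}^{n-1}$ is locally integrable, which yields \eqref{eq:uysob} for a.e.\ $y$, and simultaneously shows $y\mapsto u_y$ belongs to $L^1_{\mathrm{loc}}(\R^n;W^{1,n-1}(\S;\S))$. Making the chain rule rigorous requires a small approximation argument: one approximates $u$ by smooth maps $u_j\to u$ in $W^{1,n-1}(\S;\R^n)$, for which the computation is classical, and passes to the limit using the weighted bound just derived together with a.e.\ convergence.

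For the second stage, continuity of $\Phi$, I would argue by this same approximation/stability scheme combined with a dominated-convergence argument in the $y$-variable. Given $u_j\to u$ in $W^{1,n-1}(\S;\R^n)$, pass to a subsequence so that $u_j\to u$ and $\nabla_T u_j\to\nabla_T u$ pointwise $\haus$-a.e.\ on $\S$, with an $L^{n-1}$-dominating function. For a fixed $y$ outside the $\L^n$-null set where $u(x)=y$ on a positive-measure set (and similarly for the $u_j$, using a Fubini argument to discard a single null set of ``bad'' $y$), one gets $(u_j)_y\to u_y$ in $W^{1,n-1}(\S;\S)$ by the explicit formula for $\nabla_T u_y$ in terms of $\nabla_T u$ and $u$, again via dominated convergence on $\S$ with the bound $|\nabla_T(u_j)_y|\lesssim |u_j-y|^{-1}|\nabla_T u_j|$. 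Then the weighted estimate from the first stage provides an integrable (in $y$, against $(1+|y|)^{-(n+1)}\,\d y$) majorant for $\|(u_j)_y-u_y\|_{W^{1,n-1}(\S)}$, so dominated convergence in $y$ gives $\Phi(u_j)\to\Phi(u)$ in $L^1_{\mathrm{loc}}(\R^n;W^{1,n-1}(\S;\S))$. Since every subsequence has a further subsequence converging to the same limit, the full sequence converges, which is the asserted continuity (continuity at a general point follows since $W^{1,n-1}(\S;\R^n)$ is metric).

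The main obstacle I anticipate is controlling uniformly in $x$ the $y$-integral $\int_{\R^n}|u(x)-y|^{-(n-1)}(1+|y|)^{-(n+1)}\,\d y$ and, more importantly, handling the set where $u(x)-y$ is small: one must be careful that the set $\{(x,y): u(x)=y\}$ is $\haus\times\L^n$-null (true since for fixed $x$ it is a single point) so that $u_y$ is genuinely $\S$-valued and differentiable for a.e.\ $(x,y)$, and that the ``bad'' set of $y$ for which $\haus(\{x: u(x)=y\})>0$ is $\L^n$-null (true by Fubini, since $\int_{\R^n}\haus(\{u=y\})\,\d y = \int_\S \L^n(\{u(x)\})\,\d\haus = 0$). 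Once these measure-theoretic points are nailed down, the rest is a routine chain rule plus dominated convergence, with no hidden analytic difficulty; in particular the subcriticality $n-1<n$ is exactly what makes the singular integral in $y$ converge, and this is the crux of why the statement holds in this generality.
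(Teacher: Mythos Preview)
Your first stage is essentially the paper's approach: Fubini over $(x,y)$ together with the local integrability of $|z-y|^{-(n-1)}$ in $\R^n$ gives $y\mapsto\|\nabla_T u_y\|_{L^{n-1}}^{n-1}\in L^1_{\mathrm{loc}}(\R^n)$. The paper integrates over balls $B_R$ rather than against the weight $(1+|y|)^{-(n+1)}$, and justifies the chain rule by truncating the projection to the globally Lipschitz map $P_{y,\delta}(z):=(z-y)/\max(|z-y|,\delta)$ (so the chain rule for Lipschitz$\,\circ\,$Sobolev applies directly, then $\delta\to 0$) instead of approximating $u$; both routes work.

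There is, however, a genuine gap in your continuity argument. You invoke dominated convergence twice --- on $\S$ for fixed $y$ to get $(u_j)_y\to u_y$ in $W^{1,n-1}$, and then in $y$ --- but in both places the bound you quote, $|\nabla_T(u_j)_y|\lesssim |u_j-y|^{-1}|\nabla_T u_j|$, depends on $j$ through the denominator and is \emph{not} a valid majorant for DCT, even after replacing $|\nabla_T u_j|$ by a common $g\in L^{n-1}$. Your assertion that ``the weighted estimate from the first stage provides an integrable majorant for $\|(u_j)_y-u_y\|_{W^{1,n-1}}$'' conflates a uniform bound on $\int \|(u_j)_y\|^{n-1}w(y)\,\d y$ with a pointwise-in-$y$ dominating function independent of $j$; these are different things. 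The singularity lives at $y=u_j(x)$, which moves with $j$, so no obvious common majorant exists.

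The paper fills this gap precisely via the truncation $P_{y,\delta}$. After splitting off the term $\int_{B_R}\int_\S|\nabla P_y(u_j)|^{n-1}|\nabla_T u_j-\nabla_T u|^{n-1}$ (which is $\lesssim R\,\|\nabla_T u_j-\nabla_T u\|_{L^{n-1}}^{n-1}$ by your Fubini estimate), the remaining piece $\int_{B_R}\int_\S|\nabla P_y(u_j)-\nabla P_y(u)|^{n-1}|\nabla_T u|^{n-1}$ is handled by writing $\nabla P_y=\nabla P_{y,\delta}+(\nabla P_y-\nabla P_{y,\delta})$: the $P_{y,\delta}$-part has $|\nabla P_{y,\delta}|\le 1/\delta$ globally, so ordinary DCT applies, while the error is supported on $\{|u-y|\le\delta\}\cup\{|u_j-y|\le\delta\}$ and its double integral is $\lesssim\delta\,\|\nabla_T u\|_{L^{n-1}}^{n-1}$, uniformly in $j$, thanks to $\sup_z\int_{|z-y|\le\delta}|z-y|^{-(n-1)}\,\d y\lesssim\delta$. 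Letting $j\to\infty$ then $\delta\to 0$ concludes. Your scheme can be repaired along different lines (e.g.\ Vitali convergence in $y$, using $\sup_z\int_E|z-y|^{-(n-1)}\,\d y\lesssim|E|^{1/n}$ to get uniform integrability, or a generalised DCT), but this is exactly the missing device your proposal does not provide.
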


The proof of
 Lemma~\ref{l:uy}
 will be given below.

\begin{proof}[Proof of Proposition~\ref{p:deg_uy}]
The fact that $u_y\in \mathrm{VMO}(\S;\S)$ for $\L^n$-a.e. $y\in \R^n$ follows from \eqref{eq:uysob} and the embedding $W^{1,n-1}(\S;\S)\subset \mathrm{VMO}(\S;\S)$ \cite[Example~2, page 209]{Brezis1995}.
Then note that the identity \eqref{eq:degUuy} is true if $U\in C^1(\overline \B;\R^n)$,
since in that case $\mathcal L^n(u(\mathbb S^{n-1}))=0$ and \eqref{eq:degUuy} is satisfied for all $y\notin u(\S)$ \cite[Remark~1.5.10]{Nirenberg2001}).

Now fix a sequence $(U_j)_{j\in\N}\subset C^1(\overline {\mathbb{B}^n};\R^n)$ such that $U_j\to U$ $\mathcal L^n$-a.e. in $\B$ and strongly in $W^{1,n}(\B;\R^n)$, and $u_j:=U_j|_{\S}\to u$ strongly in $W^{1,n-1}(\S;\R^n)$.
By the above, we have
\begin{align}\label{eq:degUjujy}
\deg(U_j,\B;y)=\deg((u_j)_y,\S;\S)\ \ \text{for }\L^n\text{-a.e. }y\in\R^n,
\end{align}
and all $j\in\N$.
Moreover, the proof of Lemma~\ref{lem:TUBV} ensures that
\begin{align}\label{eq:convdegUj}
\deg(U_j,\B;y)\to \deg(U,\B;y)\ \ \text{for }\L^n\text{-a.e. }y\in\R^n
\end{align}
as $j\to\infty$.
Thanks to the continuity of the map $\Phi$ in Lemma~\ref{l:uy}
and Fatou's lemma,
for $\L^n$-a.e. $y\in\R^n$ there is a subsequence $j'\to\infty$ such that $(u_{j'})_y \to u_y$ strongly in $W^{1,n-1}(\S;\S)$.
By the embedding 
$W^{1,n-1}(\S;\S)\subset \mathrm{VMO}(\S;\S)$  and \cite[Section I.3, Theorem~1]{Brezis1995}, this implies
\begin{align*}
\deg((u_{j'})_y,\S;\S)\to \deg(u_y,\S;\S)\ \ \text{as }j'\to\infty.
\end{align*}
Combining this with \eqref{eq:degUjujy}-\eqref{eq:convdegUj} proves \eqref{eq:degUuy}.
\end{proof}

\begin{proof}[Proof of Lemma~\ref{l:uy}]
The proof is inspired by \cite[Lemma~4.3]{JerrardSoner2002} and \cite[Lemma~3.8]{CanevariOrlandi2019}, with slight modifications due to the fact that we do not impose $u\in L^\infty(\S;\R^n)$.

Let  $u\in W^{1,n-1}(\mathbb S^{n-1};\R^n)$.
First note that
 the set $\lbrace y\in\R^n\colon \mathcal H^{n-1}(\lbrace u=y\rbrace)>0\rbrace$
 is at most countable since 
 $\lbrace y\in\R^n\colon \mathcal H^{n-1}(\lbrace u=y\rbrace)\geq 1/N\rbrace$ is finite for any integer $N\geq 1$. 
 Therefore $u_y$ is well-defined $\mathcal H^{n-1}$-a.e. on $\mathbb S^{n-1}$, for $\L^n$-a.e. $y\in\R^n$.

For $\delta>0$ and $y\in\R^n$ define
\begin{align*}
P_{y,\delta}&   
\colon\R^n \to \R^n,
\quad
z
\mapsto \frac{z-y}{\max{|z-y|,\delta}}\,,
\\
P_y &
\colon \R^n\setminus \lbrace y\rbrace \to \R^n\,,
\quad
z
\mapsto \frac{z-y}{|z-y|}\,,
\end{align*}
so that $u_y=P_y(u)$,
and
since $P_{y,\delta}\to P_y$ pointwise on $\R^n\setminus \lbrace y\rbrace$ as $\delta\to 0$ and $|P_{y,\delta}|\leq 1$, by the dominated convergence theorem we have
\begin{align*}
u_{y,\delta}:=P_{y,\delta}(u)\longrightarrow u_y\ \ \text{ in }\mathcal D'(\mathbb S^{n-1};\R^n) \ \ \text{as }\delta\to 0\,.
\end{align*}
Since $P_{y,\delta}$ is Lipschitz we have $u_{y,\delta}\in W^{1,n-1}(\mathbb S^{n-1};\R^n)$, and
\begin{align}\label{eq:convgraddistr}
\nabla P_{y,\delta}(u)\nabla_Tu = \nabla_Tu_{y,\delta} \longrightarrow \nabla_Tu_y
\ \ \text{ in }\mathcal D'(\mathbb S^{n-1};\R^n)
\ \ \text{as }\delta\to 0\,.
\end{align}
For all $z\in\R^n\setminus \lbrace y\rbrace$ we have
\begin{align*}
\nabla P_y(z)
&
=\frac{1}{|z-y|} 
\left( 
I_n -\frac{z-y}{|z-y|}\otimes
 \frac{z-y}{|z-y|}\right),
 \\[3pt]
\nabla P_{y,\delta}(z)
&
=\mathbf 1_{|z-y|>\delta} \nabla P_y(z) +\mathbf 1_{|z-y|\leq \delta}\bigg(\frac{1}{\delta} I_n\bigg)\,,
\end{align*}
and therefore
\begin{align*}
|
\nabla P_{\delta,y}(u)\nabla_Tu - \nabla P_y(u) \nabla_Tu
|
\leq
2
\frac{|\nabla_Tu|}{|u-y|} \ \ \haus\text{-a.e. on } \S\,.
\end{align*}
For any $y\in\R^n$ satisfying
\begin{align}\label{eq:integrDu_y}
\frac{|\nabla_Tu|}{|u-y|}\in L^{n-1}(\mathbb S^{n-1})\,,
\end{align}
then by the dominated convergence theorem we would get
\begin{align*}
\nabla_Tu_{y,\delta}=\nabla P_{y,\delta}(u)\nabla_Tu \longrightarrow \nabla P_y(u)\nabla_Tu\quad\text{strongly in }L^{n-1}(\mathbb S^{n-1};\R^{n\times (n-1)})\,,
\end{align*}
and therefore, thanks to \eqref{eq:convgraddistr},
\begin{align*}
\nabla_T u_y =\nabla P_y(u)\nabla_Tu\ \ \haus\text{-a.e. on }\mathbb S^{n-1}\,.
\end{align*}
Thus, since also $|\nabla P_y(u)\nabla_Tu|\leq |\nabla_Tu|/|u-y|$ pointwise $\haus$-a.e. on $\S$, we would infer that 
\begin{align*}
\nabla_Tu_y\in L^{n-1}(\mathbb S^{n-1};\R^{n\times(n-1)})\,.
\end{align*}
Hence, in order to prove \eqref{eq:uysob} it suffices to show the integrability \eqref{eq:integrDu_y} for $\L^n$-a.e. $y\in\R^n$.
For any $R>0$ we have by  Fubini's theorem,
\begin{align*}
&\int_{|y|\leq R}\int_{\mathbb S^{n-1}} 
\frac{|\nabla_Tu(x)|^{n-1}}{|u(x)-y|^{n-1}}\,\d\mathcal H^{n-1}(x)\, \dy
\leq M_R \int_{\mathbb S^{n-1}}|\nabla_Tu(x)|^{n-1}
\,\d\mathcal H^{n-1}(x)\,,
\end{align*}
where
\begin{align*}
M_R :=
\sup_{z\in\R^n} \int_{|y|\leq R}\frac{\dy}{|z-y|^{n-1}}\,.
\end{align*}
For $|z|\geq 2R$ we have
\begin{align*}
\int_{|y|\leq R}\frac{\dy}{|z-y|^{n-1}}\leq \int_{|y|\leq R}\frac{\dy}{R^{n-1}} \lesssim R\,,
\end{align*}
and similarly for $|z|\leq 2R$ we also have
\begin{align*}
\int_{|y|\leq R}\frac{\dy}{|z-y|^{n-1}}\leq \int_{|y|\leq 3R}\frac{\dy}{|y|^{n-1}} \lesssim R\,.
\end{align*}
Hence, $M_R\lesssim R$ and also
\begin{align*}
&\int_{|y|\leq R}\int_{\mathbb S^{n-1}} 
\frac{|\nabla_Tu(x)|^{n-1}}{|u(x)-y|^{n-1}}\,\d\mathcal H^{n-1}(x)\, \dy
\lesssim R\int_{\mathbb S^{n-1}}|\nabla_Tu|^{n-1}
\,\d\mathcal H^{n-1} <\infty\,,
\end{align*}
for any $R>0$,
which implies that \eqref{eq:integrDu_y} is satisfied for $\L^n$-a.e. $y\in\R^n$ and concludes the proof of \eqref{eq:uysob}.

Next we show that the mapping $\Phi$ defined in \eqref{eq:Phi}
is continuous. To that end, we fix a sequence $(u_j)_{j\in \N}\subset W^{1,n-1}(\S;\R^n)$ such that $u_j\to u$ strongly in $W^{1,n-1}(\mathbb S^{n-1};\R^n)$,
and assume without loss of generality that also $u_j\to u$ pointwise $\haus$-a.e. on $\mathbb S^{n-1}$.
We compute
\begin{align*}
&\int_{|y|\leq R}\int_{\mathbb S^{n-1}}|\nabla_T(u_j)_y-\nabla_Tu_y|^{n-1}\, \d\mathcal H^{n-1}\,\dy
\\[2pt]
&
=
\int_{|y|\leq R}\int_{\mathbb S^{n-1}}
 |\nabla P_y(u_j)\nabla_Tu_j-\nabla P_y(u)\nabla_Tu|^{n-1}
 \, \d\mathcal H^{n-1}\,\dy
\\[2pt]
&
\lesssim I_1^j +I_2^{j,\delta} +I_3^{j,\delta}\,,
\end{align*}
where we introduce an arbitrary $\delta>0$ and set
\begin{align*}
I_1^j
&:=
\int_{|y|\leq R}\int_{\mathbb S^{n-1}}
 |\nabla P_y(u_j)|^{n-1} |\nabla_Tu_j-\nabla_Tu|^{n-1}
 \, \d\mathcal H^{n-1}\dy\,,
\\
I_2^{j,\delta}
&
:=
\int_{|y|\leq R}\int_{\mathbb S^{n-1}}
|\nabla P_{y,\delta}(u_j)-\nabla P_{y,\delta}(u)|^{n-1} |\nabla_Tu|^{n-1}
 \, \d\mathcal H^{n-1}\dy\,,
\\
I_3^{j,\delta}
&
:=\int_{|y|\leq R}\int_{\mathbb S^{n-1}}
\left(|\nabla P_{y,\delta}-\nabla P_y|(u)
+
|\nabla P_{y,\delta}-\nabla P_y|(u_j)
\right)^{n-1}
|\nabla_Tu|^{n-1}
 \, \d\mathcal H^{n-1}\dy\,.
\end{align*}
We estimate the first term using Fubini's theorem, the fact that $|\nabla P_y(z)|\lesssim 1/|z-y|$ and the above estimate on $M_R$, 
 and obtain
\begin{align*}
I_1^j
&
\lesssim R \int_{\mathbb S^{n-1}}|\nabla_T u_j-\nabla_Tu|^{n-1}\,\d\mathcal H^{n-1}\,,
\end{align*}
and the right hand side of the above estimate tends to zero as $j\to\infty$.
Since $|\nabla P_{y,\delta}|\lesssim 1/\delta$, for fixed $\delta>0$ the second term $I_2^{j,\delta}$ tends to zero as $j\to\infty$ by the dominated convergence theorem. Finally, by Fubini's theorem, the third term $I_3^{j,\delta}$ satisfies
\begin{align*}
I_3^{j,\delta}\lesssim M_{R,\delta} \int_{\mathbb S^{n-1}}|\nabla_Tu|^{n-1}\,\d\mathcal H^{n-1}\,,
\end{align*}
where
\begin{align*}
M_{R,\delta}
&
:=
\sup_{z\in\R^n} \int_{|y|\leq R} |\nabla P_{y}(z)-\nabla P_{y,\delta}(z)|^{n-1} \, \dy\,.
\end{align*}
All this implies that, for any $\delta>0$,
\begin{align}\label{eq:basic_estimate_for_Phi_cont}
\limsup_{j\to \infty}
\int_{|y|\leq R}\int_{\mathbb S^{n-1}}|\nabla_T (u_j)_y-\nabla_Tu_y|^{n-1}\, \d\mathcal H^{n-1}\dy
\lesssim M_{R,\delta} \int_{\mathbb S^{n-1}}|\nabla_Tu|^{n-1}\,\d\mathcal H^{n-1}\,.
\end{align}
Finally, using the explicit expressions of $\nabla P_y$ and $\nabla P_{y,\delta}$,
and in particular the fact that 
\[\nabla P_y(z)-\nabla P_{y,\delta}(z)=0 \ \text{for }\ |z-y|\geq \delta\,,\] 
we find that
\begin{align*}
M_{R,\delta}
&
\lesssim \sup_{|z|\leq R+\delta}\int_{|z-y|\leq \delta}\frac{\dy}{|z-y|^{n-1}}
\lesssim \delta\,.
\end{align*}
Letting $\delta\searrow 0$ in \eqref{eq:basic_estimate_for_Phi_cont}, shows that for every $R>0$ fixed, 
\begin{align*}
\lim_{j\to\infty}\int_{|y|\leq R}\int_{\mathbb S^{n-1}}|\nabla_T(u_j)_y-\nabla_Tu_y|^{n-1}\, \d\mathcal H^{n-1}\,\dy=0\,.
\end{align*}
This, together with the fact that by the dominated convergence theorem once again,
\begin{align*}
\lim_{j\to\infty}\int_{|y|\leq R}\int_{\mathbb S^{n-1}}|(u_j)_y-u_y|^{n-1}\, \d\mathcal H^{n-1}\dy=0\,,
\end{align*}
concludes the proof that the map $\Phi$ defined in \eqref{eq:Phi} is continuous in the appropriate topologies.
\end{proof}

\section*{Acknowledgements} 

We would like to thank Giacomo Canevari for pointing out to us the proof of Proposition~\ref{p:deg_uy}. 

AG was supported by Dr.\ Max R\"ossler, the Walter H\"afner Foundation and ETH Z\"urich Foundation. 
 XL was supported by 
the ANR project ANR-22-CE40-0006. 
KZ was supported by the Deutsche Forschungsgemeinschaft (DFG, German Research Foundation) under Germany's Excellence Strategy EXC-2044-390685587, Mathematics M\"unster: Dynamics--Geometry--Structure, and currently by the Sonderforschungsbereich 1060 and the Hausdorff Center for Mathematics (HCM) under Germany's Excellence Strategy -EXC-2047/1-390685813.

We would also like to thank the Hausdorff Institute for Mathematics (HIM) in Bonn,
funded by the Deutsche Forschungsgemeinschaft (DFG, German Research Foundation) under Germany's Excellence Strategy – EXC-2047/1 – 390685813,  and the organizers of the Trimester Program ``Mathematics for Complex Materials'' (03/01/2023-14/04/2023, HIM, Bonn) for their hospitality during the period that this work was initiated. 
AG and KZ would also like to thank the hospitality of the Institut de Math\'ematiques de Toulouse 
(09/10/2023-13/10/2023), where part of this research was developed.

\bibliographystyle{abbrv-andre}
\bibliography{rigid_conform}

\end{document}